\newcolumntype{P}[1]{>{\centering\arraybackslash}p{#1}}
\newcolumntype{M}[1]{>{\centering\arraybackslash}m{#1}}
\definecolor{mygray}{rgb}{0.98,0.95,0.75}
\newtcolorbox{mybox}{
	arc=0pt,
	boxrule=0pt,
	colback=mygray,
	width=1\textwidth,   
	colupper=black,
	fontupper=\normalsize
}
\newcommand{\xdownarrow}[1]{%
	{\left\downarrow\vbox to #1{}\right.\kern-\nulldelimiterspace}
}
\newcommand{\xuparrow}[1]{%
	{\left\uparrow\vbox to #1{}\right.\kern-\nulldelimiterspace}
}
\newtheorem{theorem}{Theorem}[section]
\newtheorem{corollary}[theorem]{Corollary}
\newtheorem{definition}[theorem]{Definition}
\newtheorem{example}[theorem]{Example}
\newtheorem{lemma}[theorem]{Lemma}
\newtheorem{proposition}[theorem]{Proposition}
\newtheorem{remark}[theorem]{Remark}
\tikzset{
  doubleimplies/.style={-Implies, double, thick}
}
\newenvironment{proof}[1][Proof]{\noindent\textbf{#1.} }{\ \rule{0.5em}{0.5em}}
\title{\textbf{Transverse slices, Ruas' conjecture, and Zariski's multiplicity conjecture for quasihomogeneous surfaces\footnotetext{\textit{2020 Mathematics Subject Classification}. 14B07 (Primary) 14H20, 14J17, 32S50 (Secondary).}}}
\author{ \ \ \ \\{Silva, O. N. $ \ \ $ and  $ \ \ $ Silva Jr, M. M. $ \ \ $}}
\date{}
\begin{document}
	
	\maketitle
	
	\begin{abstract}
		In this work, we consider a finitely determined, quasihomogeneous, corank 1 map germ $f$ from $(\mathbb{C}^2,0)$ to $(\mathbb{C}^3,0)$. We introduce the concept of the \textit{$\mu_{\mathbf{m},\mathbf{k}}$-minimal transverse slice of $f$}. Since such a slice is a plane curve, it admits a topological normal form, which we describe explicitly. Assuming the $\mu_{\mathbf{m},\mathbf{k}}$-minimal transverse slice hypothesis, we provide a proof for the equivalence between topological triviality and Whitney equisingularity in Ruas’ conjecture within this setting. We also provide a counterexample which shows that Whitney equingularity does not imply bi-Lipschitz equisingularity, given an answer to a question by Ruas. Moreover, we show that every topologically trivial $1$-parameter unfolding of $f=(f_1,f_2,f_3)$ (not necessarily with $\mu_{\mathbf{m},\mathbf{k}}$-minimal transverse slice) is of non-negative degree; that is, any additional term $\alpha$ in the deformation of $f_i$ has weighted degree not smaller than that of $f_i$. As a consequence, we provide a proof of Zariski’s multiplicity conjecture for 1-parameter families of such germs.
	\end{abstract}
	
\textbf{Keywords:} Zariski's multiplicity conjecture, Ruas' conjecture, Transverse Slices, Equisingularity.	
	

	\section{Introduction}
	
	$ \ \ \ \ $ In the 1970s, Oscar Zariski established a general theory of equisingularity for hypersurfaces over fields of characteristic zero (see \cite{Zariski1979} and \cite{Zariski}). Since then, the study of several notions of equisingularity has remained central in singularity theory, with topological triviality and Whitney equisingularity serving as its classical criteria. These notions connect equisingularity to invariants such as the Milnor number and to regularity conditions on stratifications (see for instance \cite{Briancon} and \cite{whitney65}). Several fundamental problems, such as Zariski’s multiplicity conjecture for families of hypersurfaces \cite{Conjzari}, Ruas’ conjecture for families of finitely determined map germs from $(\mathbb{C}^2,0)$ to $(\mathbb{C}^3,0)$ \cite{Conjruas}, and Houston’s conjecture for families of map germs from $(\mathbb{C}^n,0)$ to $(\mathbb{C}^{n+1},0)$ \cite[Remark 7.3]{Houston}, continue to guide the development of singularity theory, particularly in the study of equisingularity and invariants of map germs. Even though Ruas’s conjecture has received an answer in a weaker sense \cite{Ruas} and Houston’s conjecture has been settled \cite{roberto}, Zariski’s multiplicity conjecture remains open in its original formulation.

	In the case of surfaces in $\mathbb{C}^3$, the technique of transversal slices is a powerful tool that simplifies the study of equisingularity by reducing the problem to the analysis of a corresponding family of plane curves. In this work, we study finitely determined, quasihomogeneous, corank 1 map germs from $(\mathbb{C}^2,0)$ to $(\mathbb{C}^3,0)$ and the topology of their transverse slices, as introduced by Marar and Nuño-Ballesteros in \cite{MararJuan}. Our main focus is on investigating conditions under which Ruas’ conjecture and Zariski’s multiplicity conjecture hold. To this end, we make use of several techniques, among which we introduce a minimality condition on the transverse slice of $f$, which we call the \textit{$\mu_{\textbf{m},\textbf{k}}$-minimal transverse slice}.
	 
Before discussing the problems considered in this work and the methods used to solve them, we provide a brief motivation for the notion of a $\mu_{\mathbf{m},\mathbf{k}}$-minimal plane curve germ. In this setting, $r$ denotes the number of branches of the curve, while $\mathbf{m}=(m_1,\dots,m_r)$ and $\mathbf{k}=(k_1,\dots,k_r)$ are multi-indices naturally associated with the embedded topology of these branches (see Definition \ref{min}).
	 	 
	Consider a germ of a reduced, equidimensional complex surface $(X,0)$ together with a flat analytic map $p:(X,0)\to(\mathbb{C},0)$, and let $p:X\to T$ be a representative, where $T$ is an open neighborhood of $0$ in $\mathbb{C}$. The surface $X$ can be viewed as a flat $1$-parameter deformation of the curve $X_0 := p^{-1}(0)$. Suppose that the singular locus of $X$ is smooth of dimension one, and that there exists a section $\sigma:T\to X$, such that the image of $\sigma$ is smooth and each fiber $X_t:=p^{-1}(t)$ has a unique singular point at $\sigma(t)$ (see Figure \ref{Figure3}). A classical result asserts that $X$ is topologically trivial if and only if the Milnor number $\mu(X_t,\sigma(t))$ of its fibers remains constant along $\sigma(T)$ (see \cite[Th. $5.2.2$]{Buchweitz}).

  \begin{figure}[H]
        \centering
\includegraphics[scale=0.5]{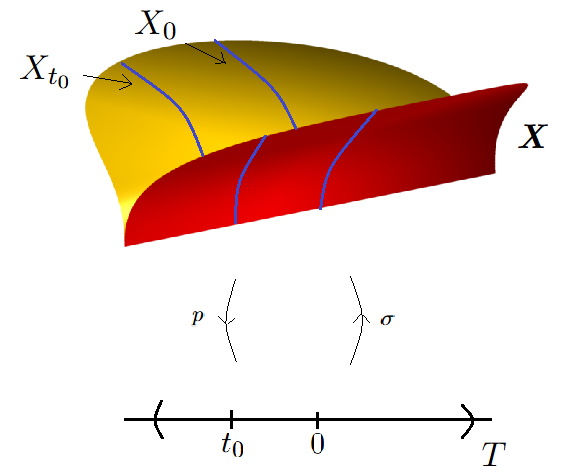} 
        \caption{The notion of a family of curves.}
        \label{Figure3}
    \end{figure}	
		
	Furthermore, $X$ is Whitney equisingular precisely when it is topologically trivial and the multiplicity $m(X_t,\sigma(t))$ of $X_t$ at $\sigma(t)$ remains constant for all sufficiently small $t \in T$ (see \cite[Th. III.3]{Briancon}).
	
	If $(X_0,0)$ and $(X_t,\sigma(t))$ are topologically equivalent germs of reduced plane curves, then a classical result of Zariski \cite{Zariskii} states that their multiplicities coincide, i.e., $m(X_0,0)=m(X_t,\sigma(t))$. Furthermore, Neumann and Pichon \cite{Pichon} show that $(X_0,0)$ and $(X_t, \sigma(t))$ have the same Lipschitz geometry; that is, there exists a homeomorphism of germs $h: (\mathbb{C}^2,0)\to(\mathbb{C}^2,0)$ with $h(X_0)=X_t$, which is bi-Lipschitz with respect to the outer metric. Consequently, the Milnor number of the fibers $X_t := p^{-1}(t)$ at $\sigma(t)$ is the unique invariant that completely characterizes Whitney equisingularity (equivalently, bi-Lipschitz equisingularity) for a family of reduced plane curves $p:X\rightarrow T$.
	
Let us now consider the case where $(S,0)$ is a surface in $(\mathbb{C}^3,0)$. 
Suppose that $(S,0)$ admits a parametrization by a finitely determined map germ. 
Equivalently, $(S,0)$ is the image of a finitely determined map germ 
$f:(\mathbb{C}^2,0)\to(\mathbb{C}^3,0)$. 
In this context, the double point curve $D(f)$ is fundamental for understanding 
the topology of the image of $f$ (see Section \ref{d2f}). 
Now, consider a 1-parameter origin-preserving unfolding $F=(f_t,t)$ of $f$. 
Clearly, the unfolding $F$ induces a deformation of $S$, and we set $S_t:=f_t(\mathbb{C}^2)$. 
Ruas (1994) conjectured that the topological triviality of the family $S_t$ 
is completely characterized by the topological triviality of the family of 
plane curves $D(f_t)$.

In other words, controlling the topological triviality of the family of surfaces $S_t$ 
amounts to controlling the topological triviality of the family of plane curves $D(f_t)$. 
As we have seen above, for plane curves, topological triviality, Whitney equisingularity, and bi-Lipschitz equisingularity are all characterized by the constancy of the Milnor number of the fibers along the parameter space. 
This naturally raises the question of whether the Whitney equisingularity (respect. bi-Lipschitz equisingularity) of $S_t$ 
can be controlled by the constancy of the Milnor number of the double point curve $D(f_t)$ at the origin. 

When working with a parametrization of a surface, the notions of topological triviality, Whitney equisingularity, and bi-Lipschitz equisingularity can likewise be defined for unfoldings of such maps (see Definition \ref{whit}). One formulation of Ruas’ conjecture for unfoldings of a map germ is the following (see for instance \cite[Th. 1.2]{Bedregal}):

\begin{mybox}
		\textbf{Ruas' conjecture (1994):}  Let $f:(\mathbb{C}^2,0)\rightarrow(\mathbb{C}^3,0)$ be a finitely determined map germ. Consider a 1-parameter unfolding $F:(\mathbb{C}^2\times\mathbb{C},0)\rightarrow(\mathbb{C}^3\times\mathbb{C},0)$ of $f.$ The following statements are equivalent:\\
		
\noindent $\bullet$ ($\mu$): $\mu(D(f_t),0)$ is constant along the parameter space.\\
\noindent $\bullet$ $(Top)$: $F$ is topologically trivial.\\
\noindent $\bullet$ $(W)$: $F$ is Whitney equisingular.\\
\noindent $\bullet$ $(Lip)$: $F$ is bi-Lipschitz equisingular.

	\end{mybox}

In 2006, Callejas-Bedregal, Houston, and Ruas obtained the equivalence between statements ($\mu$) and $(Top)$ in Ruas’ conjecture in an unpublished manuscript \cite[Th. 6.2]{Bedregal}. This result was later confirmed independently by Fernández de Bobadilla and Pe Pereira in 2008 \cite[Cor. 40]{lev}, and again by Nuño-Ballesteros and Tomazella in 2012 \cite[Th. 4.2]{Nuno2}.
   
   Nevertheless, the equivalence between $(Top)$ and $(W)$ remained unresolved for almost $25$ years, while the equivalence between $(W)$ and $(Lip)$ still constitutes an open problem in the field. According to Thom’s second isotopy lemma for analytic maps (see \cite[Th. 5.2]{Gibson}), we obtain that $(W)$ implies $(Top)$. Unsuccessful attempts to prove this conjecture in its entirety or at least partially were made by Callejas-Bedregal, Houston, and Ruas \cite{Bedregal} in 2006, Ruas \cite{Tentruas} in 2013, and indirectly by Marar and Nuño-Ballesteros in \cite{MararJuan}, where they tried to study the particular case of corank $1$ with the development of the invariant $J(f).$ 
    
    Finally, in 2016, Ruas and the first author \cite{Ruas} (see also \cite{[10]}) exhibited counterexamples to the question, showing that the equivalence between $(Top)$ and $(W)$ is not true in general. It follows by Definition \ref{defwe} that $(Lip)$ implies in $(Top)$. Diagram \ref{diagram} summarizes the current state of this problem, including known results and open questions.

\begin{figure}[h!]
\centering    
\begin{tikzpicture}
[
  node distance=35mm and 45mm,
  box/.style={
    rounded corners=3pt, 
    draw, 
    align=center, 
    minimum width=45mm, 
    minimum height=10mm, 
    inner sep=4pt,
    fill=mygray
  },
  imply/.style={->, thick, shorten >=4pt, shorten <=4pt},
  equiv/.style={<->, thick, shorten >=4pt, shorten <=4pt},
  notimply/.style={->, thick, dashed, shorten >=4pt, shorten <=4pt},
  unknown/.style={->, thick, dotted, shorten >=4pt, shorten <=4pt},
  diagimply/.style={->, thick, shorten >=8pt, shorten <=8pt},
  diagnot/.style={->, thick, dashed, shorten >=8pt, shorten <=8pt},
  diagunk/.style={->, thick, dotted, shorten >=8pt, shorten <=8pt}
]

\node[box] (A) at (0,2.45) {Constancy of $\mu(D(f_t),0)$};
\node[box] (B) at (0,0)   {Topological Triviality};
\node[box] (C) at (-4,-2.45)    {Whitney equisingularity};
\node[box] (D) at (4,-2.45)    {bi-Lipschitz equisingularity};

\node(E) at (0.35,1.23) {\cite{Bedregal}};

\node(F) at (-1.5,-1.4) {$|$};

\draw[Implies-Implies, double, thick, shorten >=10pt, shorten <=10pt] (A) -- (B) node[midway, sloped, above]{};

\draw[-{Implies[length=3pt]}, double, thick, shorten >=10pt, shorten <=10pt] (C) -- (B)  node[midway, sloped, above] {\cite{Gibson}};

\draw[-Implies, double, thick, shorten >=70pt, shorten <=10pt] ([yshift=-0.001cm]B.south) -- ([yshift=-0.001cm]C.south)
      node[midway, font=\small] {} node[pos=0.27, sloped, below] {\cite{Ruas}};  

\draw[Implies-Implies, double, thick, shorten >=5pt, shorten <=5pt] (D) -- (C)  node[pos=0.5, sloped, above] {$?$}; 

\draw[Implies-, double, thick, shorten >=10pt, shorten <=10pt] (B) -- (D)  node[midway, sloped, above, font=\small] {Def.};

\draw[-Implies, double, thick, shorten >=70pt, shorten <=10pt] ([yshift=-0.001cm]B.south) -- ([yshift=-0.001cm]D.south)
      node[midway, font=\small] {} node[pos=0.27, sloped, below] {$?$};

\end{tikzpicture}
\caption{Implication diagram for Ruas’ conjecture.}
\label{diagram}
\end{figure}
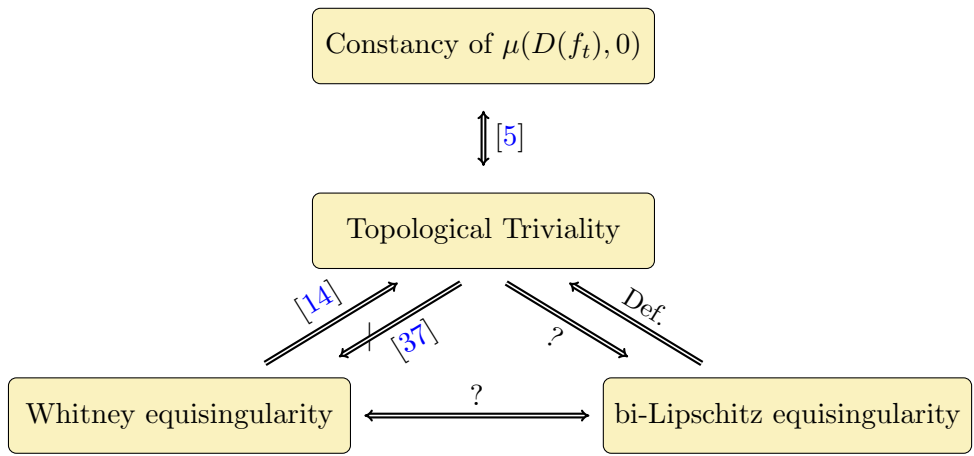
     
In particular, one of the counterexamples given in \cite{Ruas} is the family $f_t:(\mathbb{C}^2,0)\rightarrow (\mathbb{C}^3,0)$ defined by $$f_t(x,y)=(x,y^4,x^5y-5x^3y^3+4xy^5+y^6+ty^7).$$      

Throughout this work, we shall refer to the surface $R := f_0(\mathbb{C}^2)$ as Ruas’s surface (see Figure \ref{Figure2}(a)), in memory of Professor Maria Aparecida Soares Ruas, lovingly remembered by everyone as ``Cidinha''. To the best of our knowledge, \cite[Example 5.5]{Ruas} is the first example reported in the literature of a surface parametrized by a finitely determined map germ that admits a topologically trivial deformation that fails to be Whitney equisingular. 

     \begin{figure}[H]
        \centering
\includegraphics[scale=0.54]{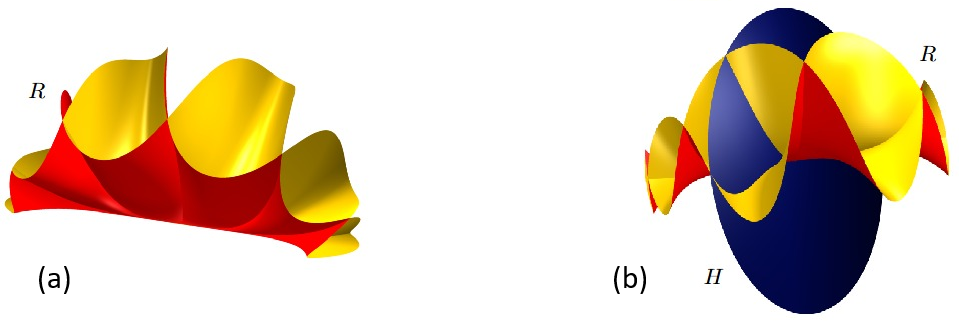} 
        \caption{Transverse slice of Ruas’s surface.}
        \label{Figure2}
    \end{figure}
    
    In this family, the transverse slice of $f_t$ (see Definition \ref{slice}) is an irreducible plane curve with three characteristic exponents: $4,6,$ and $9$ when $t=0$, and $4,6,$ and $7$ when $t\ne0$ (see \cite[Prop. 3.10]{slice}).  Figure \ref{Figure2}(b) illustrates the transversal slice of $f_0$ which parametrizes Ruas's surface. 
    
     An irreducible plane curve with characteristic exponents $4,6,$ and $7$ attains the minimal Milnor number among all irreducible analytic plane curve germs of multiplicity $4$ with three characteristic exponents. Ruas's surface provides the motivation for the introduction of the notion of a $\mu_{\textbf{m},\textbf{k}}$-minimal plane curve.
    
    This type of curve is defined as a plane curve with $r$ branches that attains the minimum Milnor number under a certain fixed condition (see Definition \ref{min}). An interesting fact is that the transverse slice of each counterexample in \cite[Table 4.1]{[10]} is not $\mu_{m,k}$-minimal. So, a very natural question is the following one:

    \begin{mybox}
		\textbf{Question 1:} Let $f:(\mathbb{C}^2,0)\rightarrow(\mathbb{C}^3,0)$ be a finitely determined map germ, and let $F=(f_t,t)$ be a topologically trivial $1$-parameter unfolding of $f$.\\
		
		 If the transverse slice $\gamma$ of $f$ is $\mu_{m,k}$-minimal, does it follow that $F$ is Whitney equisingular?
	\end{mybox}

In this work, under two additional hypotheses, we present a positive answer to Question $1$, which forms our first main result. More precisely, we prove the following:

    \begin{theorem}\label{1.1}
        Let $f:(\mathbb{C}^2,0)\rightarrow(\mathbb{C}^3,0)$ be a finitely determined, quasihomogeneous, corank 1 map germ. Suppose that $F=(f_t,t)$ is a topologically trivial 1-parameter unfolding of $f.$ If the transverse slice $\gamma$ of $f$ is $\mu_{m,k}$-minimal, then $F$ is Whitney equisingular.
    \end{theorem}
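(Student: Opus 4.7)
\textbf{Proof proposal for Theorem \ref{1.1}.}

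The plan is to invoke Gaffney's characterization of Whitney equisingularity for families of corank $1$ map germs $(\mathbb{C}^2,0)\to(\mathbb{C}^3,0)$: the unfolding $F=(f_t,t)$ is Whitney equisingular if and only if the three Gaffney-type invariants of $f_t$, namely the number $C(f_t)$ of cross-caps, the number $T(f_t)$ of triple points, and the Milnor number $\mu(D(f_t),0)$ of the double point curve, remain constant in $t$. Topological triviality of $F$ already yields constancy of $\mu(D(f_t),0)$ by the result of Fern\'andez de Bobadilla and Pe Pereira (equivalently, Callejas-Bedregal, Houston and Ruas). Therefore the theorem reduces to proving that $C(f_t)$ and $T(f_t)$ are constant along the parameter.

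The second step exploits the quasihomogeneous structure. In the quasihomogeneous corank $1$ setting, both $C(f_t)$ and $T(f_t)$ are encoded by the weighted degrees of the three components of $f_t$ together with the weights of $x$ and $y$; concretely, they can be expressed through the Mond-type formulas that hold when $f_t$ remains quasihomogeneous with the same weights. By the non-negative degree result for topologically trivial unfoldings established earlier in the paper, every additional term $\alpha$ appearing in $f_{t,i}-f_i$ has weighted degree at least the weighted degree of $f_i$. Hence $C(f_t)\le C(f)$ and $T(f_t)\le T(f)$, and the problem becomes one of upgrading these inequalities to equalities.

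The third step is where the $\mu_{m,k}$-minimality of the transverse slice $\gamma$ enters. Topological triviality of $F$ induces topological triviality on the family of transverse slices $\gamma_t$, so in particular $\mu(\gamma_t)=\mu(\gamma)$. Using the upper semicontinuity of the number of characteristic exponents proved earlier in the paper for the $\mu_{m,k}$-minimal case, together with the minimality itself, I would conclude that the topological type of $\gamma_t$ coincides with that of $\gamma$ for all small $t$. Combining this rigidity of the slice with the non-negative degree property, any new term in the deformation that strictly raises the weighted degree of some $f_i$ would force the topological type of $\gamma_t$ to differ from that of $\gamma$, contradicting $\mu_{m,k}$-minimality. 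Therefore no such strictly positive-degree term can survive, which pins down $C(f_t)$ and $T(f_t)$ as constants.

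The main obstacle I expect is exactly this rigidity step, i.e.\ showing that the minimality of $\gamma$, combined with the non-negative degree condition, rules out deformations that keep $\mu(\gamma_t)$ constant while changing $C(f_t)$ or $T(f_t)$. The counterexamples of Ruas and the first author in \cite{Ruas,[10]} show that without the $\mu_{m,k}$-minimality hypothesis exactly this kind of invisible deformation is possible, so the minimality must be used in an essential way; the argument will likely require a careful bookkeeping of the contribution of each weighted monomial appearing in the unfolding to the characteristic exponents of the slice, so as to convert the minimality into a genuine equality of all the Gaffney invariants.
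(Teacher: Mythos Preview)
Your proposal contains a genuine gap at the third step. You write that ``topological triviality of $F$ induces topological triviality on the family of transverse slices $\gamma_t$, so in particular $\mu(\gamma_t)=\mu(\gamma)$.'' This implication is false, and in fact it is precisely what the counterexamples to Ruas' conjecture (which you cite) disprove: in the family $f_t(x,y)=(x,y^4,xy^5+x^5y+y^6+ty^7)$ the unfolding is topologically trivial (equivalently $\mu(D(f_t))$ is constant) while $\mu(\gamma_t)$ jumps. The ambient homeomorphisms in the definition of topological triviality of $F$ have no reason to respect a generic slicing plane, so nothing passes to the slice family for free. Since, by the criterion of Marar, Nu\~no-Ballesteros and Pe\~nafort-Sanchis, Whitney equisingularity is equivalent to constancy of $\mu(D(f_t))$ \emph{and} $\mu(\gamma_t)$, what you assume in step~4 is essentially the conclusion of the theorem. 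Your step~6 is also backwards: adding terms of strictly higher weighted degree to a quasihomogeneous germ is exactly what the counterexamples do while still changing $\mu(\gamma_t)$, so such terms cannot be ruled out by the argument you sketch.

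The paper's proof is organized differently. It uses the criterion $\mu(D(f_t))$ and $\mu(\gamma_t)$ constant, rather than $(C,T,\mu(D))$. The first is handled by topological triviality. For the second, the paper first invokes its classification result that the slice of a finitely determined quasihomogeneous corank~$1$ germ has either two or three characteristic exponents, and its Zariski-type theorem giving equimultiplicity of $F$, hence $m(\gamma_t)=m(\gamma)$. In the two-exponent case, $\mu_{m,2}$-minimality plus $m(\gamma_t)=m$ plus the upper semicontinuity result for $ce(\gamma_t)$ force $\mu(\gamma_t)\ge\mu(\gamma)$, which combined with ordinary upper semicontinuity of $\mu$ gives equality. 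In the three-exponent case, the paper uses its Lemma~\ref{slice3}, which shows that $\mu_{m,3}$-minimality pins $f$ down to the rigid shape $(x,\,y^m+\alpha xy,\,y^{m+2}+\beta xy^3)$ of type $(m-1,m,m+2;m-1,1)$; then the non-negative degree lemma shows that any topologically trivial deformation cannot alter the characteristic exponents of the slice. The $\mu_{m,k}$-minimality is thus used \emph{before} one knows anything about $\mu(\gamma_t)$, to constrain either the combinatorics of the slice (Case~1) or the shape of $f$ itself (Case~2); it is not used, as in your outline, to deduce rigidity \emph{after} already assuming $\mu(\gamma_t)$ constant.
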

    
    As a consequence of Theorem \ref{1.1}, we identify a class of map germs for which the equivalence between $(Top)$ and $(W)$ in Ruas’ conjecture holds. In particular, Hironaka’s result (see \cite[Cor. 6.2]{Hironaka}), which states that Whitney regular stratifications imply equimultiplicity along the strata, allows us to conclude that Zariski’s multiplicity conjecture \cite{Conjzari}, in its family version, also holds for this class of hypersurfaces.
    
On the other hand, we present a counterexample showing that the equivalence between $(W)$ and $(Lip)$ in Ruas's conjecture does not hold in general. We also present an infinite list of counterexamples to the implication $(Top) \Rightarrow (W)$ in Ruas’ conjecture (see Section \ref{newex}). These families are topologically trivial but fail to satisfy Whitney’s $(b)$-regularity condition, thereby providing concrete situations where topological triviality does not ensure $(b)$-regularity. This constitutes our second significant contribution (see Propositions \ref{whitbil}, \ref{contraexemplo}, and Corollary \ref{corcontra}).

According to Trotman \cite[p. 18]{Trotman} (see also \cite[p. 657]{Trot}), a question originally raised by Thom, it remains unknown whether a topologically trivial complex analytic family necessarily satisfies Whitney’s $(a)$-regularity condition. Our new examples may shed light on this open problem, since determining whether they fulfill $(a)$-regularity could represent a significant step toward resolving Thom’s long-standing question.

\begin{mybox}
		\textbf{Thom's Question:} Does topological triviality along the strata imply Whitney $(a)$-regularity for complex analytic stratifications?
	\end{mybox}

The new counterexamples to Ruas’ conjecture suggest that the original formulation requires refinement. In what follows, we present a new statement, hereafter referred to as \textit{Ruas’ Conjecture Revisited}.
 For the statement below, equisingular for $F$ means either topological trivial, Whitney equisingular, or bi-Lipschitz equisingular.

 \begin{mybox}
		\textbf{Ruas’ Conjecture Revisited:} Let $f:(\mathbb{C}^2,0)\rightarrow(\mathbb{C}^3,0)$ be a finitely determined map germ. Consider a 1-parameter unfolding $F:(\mathbb{C}^2\times\mathbb{C},0)\rightarrow(\mathbb{C}^3\times\mathbb{C},0)$ of $f.$\\
		
Associated to $F=(f_t,t)$, there exists a family of plane curve germs $(X_t,0)$ in the source of $F$ for which equisingularity of $F$ is equivalent to the constancy of the Milnor number of $X_t$ along the parameter space.		
	\end{mybox}

Note that the equivalence between $(\mu)$ and $(Top)$ in the original formulation of Ruas’ conjecture is a particular case of the revisited version. In this setting, the family of curves $X_t$ coincides precisely with the family of double point curves $D(f_t)$. According to the new reformulation, $F$ is topologically trivial if, and only if the Milnor number of $D(f_t)$ is constant along the parameter space.

In this work, we provide a proof to Ruas' conjecture revisited for the case in which $F$ is Whitney equisingular. More precisely, we define a new plane curve in the source, namely, $W(f):=D(f) \cup f^{-1}(\gamma)\subset \mathbb{C}^2$ and we prove our third main result:  

\begin{theorem}\label{main result 3} Let $f:(\mathbb{C}^2,0)\rightarrow(\mathbb{C}^3,0)$ be a finitely determined map germ and let $F:(\mathbb{C}^2 \times \mathbb{C},0)\rightarrow (\mathbb{C}^3 \times \mathbb{C},0)$, $F=(f_t,t)$, be an unfolding of $f$. Set $(W(f_t),0):=(D(f_t)\cup f_t^{-1}(\gamma_t),0)$, where $\gamma_t$ is the transversal slice of $f_t(\mathbb{C}^2)$. Then

\begin{center}
$F$ is Whitney equisingular $ \ \ \ $ $\Longleftrightarrow$ $ \ \ \ $ $\mu(W(f_t),0)$ is constant.
\end{center}
\end{theorem}

The next diagram is an update of the Diagram \ref{diagram} with the results obtained in the work, where the existence of the family of plane curves $L_t$ is conjectured in Section \ref{sec8}.

\begin{figure}[h!]
\centering    
\begin{tikzpicture}
[
  node distance=35mm and 45mm,
  box/.style={
    rounded corners=3pt, 
    draw, 
    align=center, 
    minimum width=45mm, 
    minimum height=10mm, 
    inner sep=4pt,
    fill=mygray
  },
  imply/.style={->, thick, shorten >=4pt, shorten <=4pt},
  equiv/.style={<->, thick, shorten >=4pt, shorten <=4pt},
  notimply/.style={->, thick, dashed, shorten >=4pt, shorten <=4pt},
  unknown/.style={->, thick, dotted, shorten >=4pt, shorten <=4pt},
  diagimply/.style={->, thick, shorten >=8pt, shorten <=8pt},
  diagnot/.style={->, thick, dashed, shorten >=8pt, shorten <=8pt},
  diagunk/.style={->, thick, dotted, shorten >=8pt, shorten <=8pt}
]

\node[box] (A) at (0,2.45) {Constancy of $\mu(D(f_t),0)$};
\node[box] (B) at (0,0)   {Top.\ Triviality};
\node[box] (C) at (-4,-2.45)    {Whitney eq.};
\node[box] (D) at (4,-2.45)    {bi-Lipschitz eq.};
\node[box] (G) at (-4,-4.9)    {Constancy of $\mu(W_t,0)$};
\node[box] (H) at (4,-4.9)    {Constancy of $\mu(L_t,0)$};

\node(E) at (-3.25,-3.68) {\small{Th. \ref{w_t}}};
\node(J) at (4.25,-3.68) {$?$};
\node(I) at (0.35,1.23) {\cite{Bedregal}};

\node(F) at (-1.5,-1.4) {$|$};
\node(M) at (1.1,-1.2) {$|$};
\node(K) at (0,-2.9) {$/$};
\node(L) at (0,-3.3) {\small{Prop. \ref{whitbil}}};
\node(L) at (0,-2.2) {\small{Th. \ref{ab} (corank 1)}};

\draw[Implies-Implies, double, thick, shorten >=10pt, shorten <=10pt] (A) -- (B) node[midway, sloped, above]{};
\draw[Implies-Implies, double, thick, shorten >=10pt, shorten <=10pt] (C) -- (G) node[midway, sloped, above]{};

\draw[Implies-Implies, double, thick, shorten >=10pt, shorten <=10pt] (D) -- (H) node[midway, sloped, above]{};

\draw[-{Implies[length=3pt]}, double, thick, shorten >=10pt, shorten <=10pt] (C) -- (B)  node[midway, sloped, above] {\cite{Gibson}};

\draw[-Implies, double, thick, shorten >=70pt, shorten <=10pt] ([yshift=-0.001cm]B.south) -- ([yshift=-0.001cm]C.south)
      node[midway, font=\small] {} node[pos=0.27, sloped, below] {\cite{Ruas}};  

\draw[Implies-, double, thick, shorten >=70pt, shorten <=70pt] ([xshift=0.001cm]D.south) -- ([xshift=0.001cm]C.south)
      node[midway, font=\small] {} node[pos=0.27, sloped, below] {};      

\draw[-Implies, double, thick, shorten >=5pt, shorten <=5pt] (D) -- (C)  node[pos=0.5, sloped, above] {};

\draw[Implies-, double, thick, shorten >=10pt, shorten <=10pt] (B) -- (D)  node[midway, sloped, above, font=\small] {Def.};

\draw[-Implies, double, thick, shorten >=70pt, shorten <=10pt] ([yshift=-0.001cm]B.south) -- ([yshift=-0.001cm]D.south)
      node[midway, font=\small] {} node[pos=0.27, sloped, below] {\small{Prop. \ref{whitbil}}};

\end{tikzpicture}
\caption{Update implication diagram for Ruas’ conjecture.}
\label{diagram2}
\end{figure}
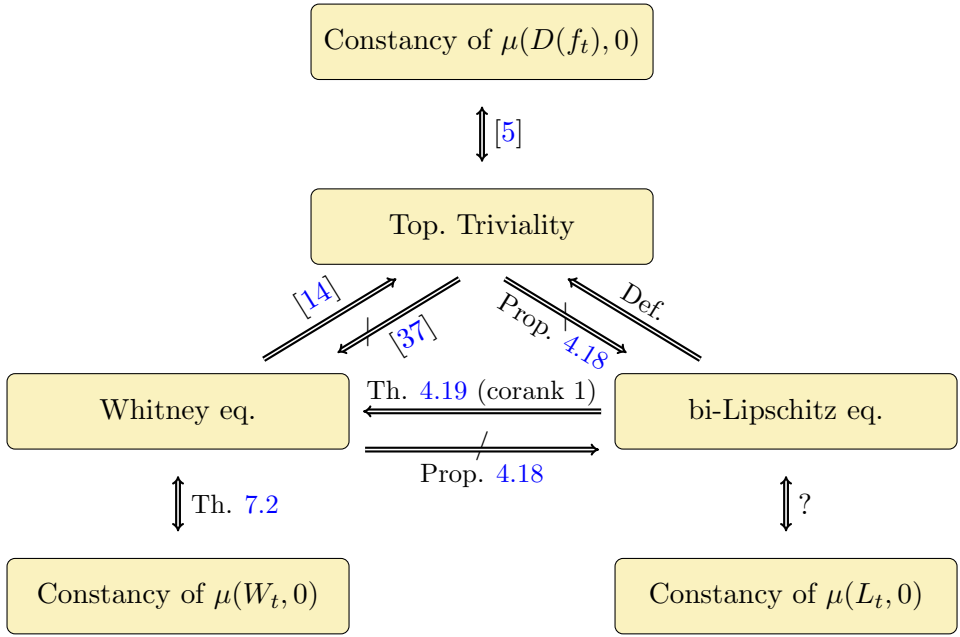
    
    About the Zariski’s multiplicity conjecture, this problem was proposed in 1971 by Oscar Zariski \cite{Conjzari}, during his address upon stepping down as president of the American Mathematical Society.  In this work, we consider the version for families of this conjecture. 
    
    More precisely, let $f:(\mathbb{C}^n,0)\rightarrow(\mathbb{C},0)$ be a reduced holomorphic function germ, $V(f)=f^{-1}(0)$ the corresponding hypersurface germ in $\mathbb{C}^n$. Let $F=(f_t,t)$ be a 1-parameter unfolding of $f,$ that is, $f_t$ is a reduced holomorphic function germ, $f_0=f$ and $f_t(0)=0,$ for all sufficiently small $t.$ Geometrically, the multiplicity of $V(f_t)=f_t^{-1}(0)$ at the origin is the number of points in the intersection of $V(f_t)$ with a generic line passing close to the origin but not through the origin. Denote by $m(V(f_t),0)$ the multiplicity of $V(f_t)$ at the origin. Furthermore, one says that the $F$ is topologically trivial if there exists a germ of homeomorphism $h_t:(\mathbb{C}^n,0)\rightarrow(\mathbb{C}^n,0)$ such that $h_t(V(f_t),0)=(V(f),0)$ for all sufficiently small $t.$ Now, we can present the problem:

    \begin{mybox}
		\textbf{Zariski's multiplicity conjecture for families:}  Let $f:(\mathbb{C}^n,0)\rightarrow(\mathbb{C},0)$ be a germ of a holomorphic function, and let $F=(f_t,t)$ be a 1-parameter unfolding of $f.$\\
		
		 If $F$ is topologically trivial, then $m(V(f_t),0)$ is constant.
	\end{mybox}

    As mentioned earlier, Zariski’s multiplicity conjecture remains an open problem, and numerous contributions addressing this problem are available in the literature. Here, we cite some special cases for the conjecture in its most general context, that is, in the case of a pair $(X,0)$ and $(Y,0).$ For example, the first contribution was made by Zariski himself \cite{Zariskii}, in the case $n=2.$

    In 1973, due to the works of A'Campo and Lê published that year, it became possible to obtain a solution for the case when $m(X,0)=1$ (see \cite[Th. 3]{ACampo} and \cite[Prop.]{Le}). In particular, an important consequence of this case is that if $(X,0)$ and $(Y,0)$ are topologically equivalent and $(X,0)$ has an isolated singularity, then $(Y,0)$ also has an isolated singularity. Later, in 1980, Navarro Aznar \cite{Navarro} proved the conjecture for the case in which $m(X,0)=2$ and $n=3.$

    Among recent advances, it is worth highlighting the important result obtained by Fernández de Bobadilla and Pełka \cite{Pelka}, which ensures that if a family $X_t$ of hypersurfaces with isolated singularity is $\mu$-constant, then $X_t$ is equimultiple. For an overview of the conjecture, the reader may also consult the work of Eyral \cite{Eyral}.

    In Section \ref{sec4}, we study the equimultiplicity of a topologically trivial family of surfaces (not necessarily with a $\mu_{m,k}$-minimal transverse slice). A common strategy is to impose an additional assumption on the map germ. For example, one may consider quasihomogeneous map germs. In this direction, in 1988, Saeki (see \cite{Saeki}) and independently Xu and Yau (see \cite{Xu} and \cite{Yau}) presented a proof for the conjecture for a pair of quasihomogeneous surfaces with isolated singularity in the case where $n=3.$ Furthermore, the first author (see \cite{OtoZari}) presented a proof of the conjecture for a pair of quasihomogeneous parametrized surfaces with non-isolated singularity also in the case where $n=3.$ Motivated by this result, we now turn our attention to families of such surfaces. More precisely, we show that if $f:(\mathbb{C}^2,0)\rightarrow(\mathbb{C}^3,0)$  is a finitely determined, quasihomogeneous, corank 1 map germ, then every topologically trivial 1-parameter unfolding $F=(f_t,t)$ of $f$ is equimultiple (see Theorem \ref{zariski}).
    
    Our strategy for proving Theorem \ref{zariski} is to use important results given by Damon \cite{Damon} and  \cite{Varchenko}. In \cite{Damon}, Damon showed that if $f=(f_1,\dots,f_p)$ is a finitely determined quasihomogeneous map germ from $(\mathbb{C}^n,0)$ to $(\mathbb{C}^p,0),$ with $p>n,$ and $F$ is a 1-parameter unfolding of $f$ such that $F$ is of non-negative degree (i.e., for any term $\alpha$ in the deformation of $f_i$, the weighted degree of $\alpha$ is not smaller than the weighted degree of $f_i$), then $F$ is topologically trivial. In \cite{Varchenko}, Varchenko showed that for a 1-parameter unfolding $F=(f_t,t)$ of a quasihomogeneous map germ $f$ from $(\mathbb{C}^n,0)$ to $(\mathbb{C},0)$ with isolated singularity, such that 
\begin{equation*}
f_t(x)=f(x)+\sum_{j=1}^k\sigma_j(t)\alpha_j(x)    
\end{equation*}
\noindent with $\sigma_j(0)=0,$ then the Milnor number of $f_t,$ $\mu(f_t),$ is constant along the parameter space if and only if the weighted degree of $\alpha_j$ is not smaller than the weighted degree of $f,$ for all $j=1,\dots,k.$

    When $p>n+1$, \cite[Example 7.2.1]{Buchweitz} shows that the converse of Damon's result does not hold, in general. Inspired by Varchenko's result, one might ask whether the converse of Damon’s result holds for $p=n+1$. This motivates the following question:

    \begin{mybox}
		\textbf{Question 2:} Let $f:(\mathbb{C}^n,0)\rightarrow(\mathbb{C}^{n+1},0)$ be a finitely determined, quasihomogeneous map germ. Is it true that any topologically trivial 1-parameter unfolding $F=(f_t,t)$ of $f$ must be of non-negative degree?
	\end{mybox}

    In this work, we provide a positive answer to Question 2 in the case of corank 1 map germs from $(\mathbb{C}^2,0)$ to $(\mathbb{C}^3,0)$ (see Lemma \ref{Teo2}), which is considered our fourth main result. We use it to present a proof of fifth result (see Theorem \ref{zariski}).

      We return our attention to the study of $\mu_{\textbf{m},\textbf{k}}$-minimal plane curve germs. It is well known in the literature that, for plane curves, the characteristic exponents of their branches and its intersection multiplicities determine and are determined by the (embedded) topology of the curve (see, for instance, \cite{Greuel}). In this sense, the normal forms of the plane curves we address here are related to the topology of the curve. That is, if $(Y,0)$ is a $\mu_{\textbf{m},\textbf{k}}$-minimal plane curve, then it is topologically equivalent to another curve $(X,0)$ parametrized by that normal form. We use Puiseux parametrizations to describe these normal forms. In particular, when dealing with a $\mu_{\textbf{m},\textbf{k}}$-minimal plane curve, can we explicitly construct a normal form? In this work, we present a positive answer to this question (see Sections \ref{normalform1} and \ref{normalform}).
	
	\section{Preliminaries}\label{sec1}
	
	$ \ \ \ \ $  Throughout this work, we assume that $f:(\mathbb{C}^2,0)\rightarrow(\mathbb{C}^3,0)$ is a germ of a finite holomorphic map. Moreover, whenever we refer to a “plane curve germ”, we mean an “analytic plane curve germ”, unless stated otherwise. Finally, we use standard notation from Singularity Theory, which the reader can find in \cite{[7]} and \cite{Greuel}.

    \subsection{Plane curve germs}

    $ \ \ \ \ $ As a starting point, we present a well-known result from the literature. In this work, as in \cite{Greuel}, $\mathbb{C}\left\{x_1,\dots,x_n\right\}$ denote the local ring of absolutely convergent power series in the indeterminates $x_1,\dots,x_n$ in a neighborhood of the origin in $\mathbb{C}^n.$ 
    
    To begin with, consider $f\in\mathfrak{m}\subset\mathbb{C}\left\{ x,y\right\}$ be irreducible and $y$-generic of order $m.$ There exists $y(u)\in\langle u\rangle\cdot\mathbb{C}\langle u\rangle$ such that $$f(u^m,y(u))=0.$$ Moreover, the map $u\mapsto (u^m,y(u))$ is a parametrization of $f,$ so-called a Puiseux parametrization. For more details of this parametrizations, see Theorem 3.3 in \cite{Greuel}, page 163.

    With this concept, we are able to define the concept of characteristic exponents through the following construction, as done in \cite{Greuel}. Consider an irreducible plane curve germ $(X,0)\subset(\mathbb{C}^2,0)$ with an isolated singularity and multiplicity $m\ge2.$ Take a Puiseux parametrization of $(X,0)$ $$\varphi:(\mathbb{C},0)\longrightarrow(X,0)$$ given by $\varphi(u)=(u^m,\varphi_1(u))$ where $$\varphi_1(u)=a_1u^{\alpha_1}+a_2u^{\alpha_2}+\cdots,$$ with $m<\alpha_1<\alpha_2<\cdots,$ $a_i\ne0\in\mathbb{C}$ and $\alpha_i$ is a positive integer for all $i.$\\
    
    Define $b_0=m$ and $b_j=gcd(m,\alpha_1,\dots,\alpha_j)$ for $j\ge1.$ Note that the set of $b_j's$ forms a non-increasing sequence of positive integers that stabilizes at some $j_0$, with $b_j=1$ for all $j\ge j_0.$ Now, define $e_0=m,$ and for each $i\ge1,$ define $e_i=\alpha_{j_i}$ where $\alpha_{j_i}$ is such that $b_{j_i-1}>b_{j_i}.$ Since the sequence $(b_j)_j$ stabilizes, there is a finite number, say $k,$ of such $e_i's.$ The numbers $e_0,e_1,\dots,e_{k-1}$ are called the \textit{characteristic exponents} of the curve $(X,0).$

    \begin{remark}\label{milnor}
        (a) Throughout this work, we denote by $ce(X,0)$ the number of characteristic exponents of $(X,0).$

        \noindent(b) In \rm{\cite{Milnor}}\textit{, Milnor presented the following characterization for the Milnor number of an irreducible plane curve $X\subset\mathbb{C}^2$ with an isolated singular point at the origin. The Milnor number of $X$ at $0$ is defined by \begin{equation}\label{miln}
            \mu(X,0):=\dim_{\mathbb{C}}\dfrac{\mathbb{C}\left\{x,y\right\}}{\left<\frac{\partial h}{\partial x},\frac{\partial h}{\partial y}\right>}
        \end{equation}
         where $(X,0)=(V(h),0).$}
    \end{remark}
	
    \subsection{Double point space for corank 1 map germs}\label{df}
	
	$ \ \ \ \ $ The multiple point spaces for map germs from $(\mathbb{C}^n,0)$ to $(\mathbb{C}^p,0),$ with $n\le p,$ play an important role in the study of their geometry. In this section, we focus only on the double point space for corank 1 map germs with $p=n+1$, which will be fundamental for the later results in this work.
    
    We are interested in studying the double point space, denoted by $D^2(f),$ and its projection onto the source of $f$, denoted by $D(f).$ Roughly speaking, $D^2(f)$ is the set of points $(\textit{\textbf{x}},\textit{\textbf{x}}')\in\mathbb{C}^n\times\mathbb{C}^n$ such that $\textit{\textbf{x}}\ne\textit{\textbf{x}}'$ and $f(\textit{\textbf{x}})=f(\textit{\textbf{x}}'),$ or $\textit{\textbf{x}}$ is a singular point of $f.$ To view $D^2(f)$ as an analytic space, we need to provide it with an appropriate analytic structure. We follow the construction in \cite{[8]}, which applies to holomorphic maps from $\mathbb{C}^n$ to $\mathbb{C}^p,$ with $n\le p.$

    Another important space in the study of the topology of $f(\mathbb{C}^n)$ is the double point hypersurface $D(f)$  in the source. According to the literature \cite{[7]}, an appropriate analytic structure for this space is given by Fitting ideals. In what follows, $f_*\mathcal{O}_n$ denotes $\mathcal{O}_n$ viewed as an $\mathcal{O}_{n+1}-$module by composition with $f,$ and $Fitt_k(f_*\mathcal{O}_n)$ denotes the $k-$th Fitting ideal of $f_*\mathcal{O}_n.$ The following definition gives us precisely the analytic structure of these spaces.

    \begin{definition}
        Let $U\subset\mathbb{C}^n$ and $V\subset\mathbb{C}^{n+1}$ be open sets. Suppose the map $f:U\rightarrow V$ is finite, that is, holomorphic, closed, and finite-to-one. Let $\pi|_{D^2(f)}:D^2(f)\subset U\times U\rightarrow U$ be the restriction of the projection onto the first factor to $D^2(f)$. The double point space is the complex space $$D(f)=V(Fitt_0(\pi_*\mathcal{O}_{D^2(f)})).$$ As a set, we have $D(f)=\pi(D^2(f)).$
    \end{definition}

    \begin{remark}\label{resultant}
        If $f:(\mathbb{C}^n,0)\rightarrow(\mathbb{C}^{n+1},0)$ is a corank 1 finite map germ, then, up to a change of coordinates, we can write $f$ in the form $$f(\textbf{x},y)=(\textbf{x},p(\textbf{x},y),q(\textbf{x},y)),$$ where $\textbf{x}=(x_1,\dots,x_{n-1})\in\mathbb{C}^{n-1}, y\in\mathbb{C}, p$ and $q$ are elements of $\mathfrak{m}_n^2$ and $\mathfrak{m}_n$ denotes the maximal ideal of $\mathcal{O}_n.$ In this context, it follows that \begin{equation}\label{d2f}
            D^2(f)=V\left(\dfrac{p(\textbf{x},y)-p(\textbf{x},y')}{y-y'},\dfrac{q(\textbf{x},y)-q(\textbf{x},y')}{y-y'}\right)\subset\mathbb{C}^n\times\mathbb{C}.
        \end{equation} Moreover, denoting these divided differences by $\phi(\textbf{x},y,y')$ and $\psi(\textbf{x},y,y'),$ respectively, in \rm{\cite{MararJuan}} \textit{Marar e Nuño-Ballesteros observed that $D(f)$ is given by the resultant of $\phi$ and $\psi$ with respect to the variable $y',$ that is, $D(f)=V(Res_{\phi,\psi,y'}(\textbf{x},y)).$ For a definition and properties of the resultant of two polynomials, see} \rm{\cite{ideal}}.
    \end{remark}

    Another important definitions, where the terminology is due the first author \cite{[10]}, are the fold components of $D(f)$ and identification components of $D(f)$ definitions. 

    \begin{definition}
        Let $f:(\mathbb{C}^2,0)\rightarrow(\mathbb{C}^3,0)$ be a finitely determined map germ. Let $f:U\rightarrow V$ be a representative of $f$ and consider an irreducible component $D(f)^j$ of $D(f).$\\

        \noindent(a) If the restriction $f|_{D(f)^j}: D(f)^j\rightarrow V$ is generically 1-1, we say that $D(f)^j$ is an identification component of $D(f)$. In this case, there exists an irreducible component $D(f)^i$ of $D(f),$ with $i\ne j,$ such that $f(D(f)^i)=f(D(f)^j).$ We say that $D(f)^i$ is the associated identification component to $D(f)^j$ or that the pair $(D(f)^i,D(f)^j)$ is a pair of identification components of $D(f).$

        \noindent(b) If the restriction $f|_{D(f)^j}: D(f)^j\rightarrow V$ is generically 2-1, we say that $D(f)^j$ is a fold component of $D(f)$.
    \end{definition}

    \subsection{Finite determinacy and quasihomogeneous map germs}
	
	$ \ \ \ \ $ In this work, the concept of finite determinacy for map germs from $(\mathbb{C}^n,0)$ to $(\mathbb{C}^p,0)$ is of utmost importance. Thus, since we are interested in studying germs of mappings where $p=n+1$ we present the definition for this case.

    \begin{definition}
        (a) Two map germs $f,g:(\mathbb{C}^n,0)\rightarrow(\mathbb{C}^{n+1},0)$ are $\mathcal{A}$-equivalent,e denoted by $f\sim_{\mathcal{A}}g,$ if there exists germs of diffeomorphisms $\Phi:(\mathbb{C}^n,0)\rightarrow(\mathbb{C}^n,0)$ and $\Psi:(\mathbb{C}^{n+1},0)\rightarrow(\mathbb{C}^{n+1},0)$ such that $g=\Psi\circ f\circ\Phi.$

        \noindent (b) A map germ $f:(\mathbb{C}^n,0)\rightarrow(\mathbb{C}^{n+1},0)$ is $\mathcal{A}$-finitely determined (or simply finitely determined) if there exists a positive integer $k$  such that for every $g$ with $k$-jet satisfying $j^kg(0)=j^kf(0),$ we have $g\sim_{\mathcal{A}}f.$
    \end{definition}

    A characterization of the concept of finite determinacy is given by the Mather-Gaffney geometric criterion \cite[Th. 2.1]{Wall}. In the particular case where $n=2,$ this criterion states that, for a map germ $f:(\mathbb{C}^2,0)\rightarrow(\mathbb{C}^3,0),$ the finite determinacy of $f$ is equivalent to the existence of a finite representative $f:U\rightarrow V,$ with $U\subset\mathbb{C}^2, V\subset\mathbb{C}^3$ open neighborhoods of the origin of their respective spaces, such that $f^{-1}(0)=\left\{0\right\}$ and the restriction $f:U\setminus\left\{0\right\}\rightarrow V\setminus\left\{0\right\}$ is stable. This means that, outside the origin, the only singularities of $f$ are cross-caps (Whitney umbrellas), transversal double points, and triple points.

    It is also worth noting that, in the study of finite determinacy of a map germ $f:(\mathbb{C}^n,0)\rightarrow(\mathbb{C}^p,0),$ Marar and Mond \cite[Th. 2.14]{MararMond} presented necessary and sufficient conditions to ensure the finite determinacy of $f$ in terms of $D^2(f)$ and other multiple point spaces, provided that $f$ has corank 1. Moreover, in the case where $n=2$ and $p=3,$ Marar, Nuño-Ballesteros, and Peñafort-Sanchis \cite{[11]} extended this criterion to the corank 2 case, according to the following result:

    \begin{theorem}[\cite{[11]}, \cite{MararMond}]
        Let $f:(\mathbb{C}^2,0)\rightarrow(\mathbb{C}^3,0)$ be a finite and generically one-to-one map germ. Then $f$ is finitely determined if and only if the Milnor number of $D(f)$ at the origin is finite.
    \end{theorem}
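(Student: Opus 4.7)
The plan is to deduce both directions from the Mather--Gaffney geometric criterion, which characterizes finite determinacy of $f:(\mathbb{C}^2,0)\to(\mathbb{C}^3,0)$ by the existence of a finite representative $f:U\to V$ with $f^{-1}(0)=\{0\}$ whose restriction $f|_{U\setminus\{0\}}$ is stable. The task is then to translate stability of $f|_{U\setminus\{0\}}$ into a purely local statement about the double point curve $D(f)$ at the origin, using the Fitting ideal description recalled in \eqref{d2f} and the fact that for maps $(\mathbb{C}^2,\cdot)\to(\mathbb{C}^3,\cdot)$ the only stable multi-germs are immersions, cross-caps, transverse double points, and ordinary triple points.

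For the forward direction, I would fix a Mather--Gaffney representative and go through each stable multi-germ of $f|_{U\setminus\{0\}}$ computing the local model of $D(f)$. At an immersive point $D(f)$ is empty; at a cross-cap the ideal in \eqref{d2f} cuts out a smooth branch through the Whitney umbrella point; at a preimage of a transverse double point $D(f)$ is smooth; and at each preimage of an ordinary triple point $D(f)$ has two smooth branches crossing transversally, i.e.\ a node. In particular, off the origin $D(f)$ is either smooth or has at worst nodal (hence isolated) singularities, and the generic one-to-one hypothesis together with the Fitting structure forces $D(f)$ to be reduced. Therefore $D(f)$ is a reduced plane curve germ at $0$ with at most an isolated singular point at $0$, and Remark \ref{milnor}(b) yields $\mu(D(f),0)<\infty$.

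For the reverse direction, I would assume $\mu(D(f),0)<\infty$, shrink $U$ so that $D(f)\cap U$ is reduced with only the origin as a possible singular point, and show that every multi-germ of $f|_{U\setminus\{0\}}$ is stable. First I would eliminate corank $2$ points outside the origin: by the local analysis underlying \cite{[11]}, a corank $2$ point of $f$ forces either a non-reduced component of $D(f)$ or a non-isolated singularity of $D(f)$ at that point, contradicting our hypothesis. With only corank $\leq 1$ multi-germs remaining, the Marar--Mond criterion \cite{MararMond} identifies stability with the dimension/smoothness requirements on $D^{k}(f)$, and these are exactly guaranteed by $D(f)$ being smooth or nodal off the origin and reduced.

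The main obstacle is this reverse implication: translating the finiteness of $\mu(D(f),0)$ into the absence of corank $2$ points off the origin and into the stability of each mono- and multi-germ of $f$ there. This requires identifying the local equations of $D(f)$ given by \eqref{d2f} with the discriminant-type data of the versal unfoldings of the candidate stable multi-germs, and it is precisely the technical core of the extension carried out in \cite{[11]} beyond the corank $1$ treatment of \cite{MararMond}. Once this local dictionary is in place, the equivalence with the Mather--Gaffney criterion is immediate.
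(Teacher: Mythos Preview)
The paper does not supply a proof of this theorem: it is stated as a cited result from \cite{[11]} and \cite{MararMond}, with no argument given. So there is no ``paper's own proof'' to compare against.

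That said, your sketch follows the standard route of those references: reduce to the Mather--Gaffney criterion and translate stability off the origin into local properties of $D(f)$. The forward direction is essentially fine. One small imprecision: the claim that $D(f)$ ``has at worst nodal singularities off the origin'' is beside the point for $\mu(D(f),0)<\infty$; what you actually need is that the \emph{germ} $(D(f),0)$ is reduced, i.e.\ the defining equation has no repeated factors. This is what the Fitting/divided-difference structure plus the generic one-to-one hypothesis gives you, and you do invoke it, but the nodal discussion is not what buys finiteness of the Milnor number at $0$.

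For the reverse direction you are honest that the real content lies in \cite{[11]} (ruling out corank $2$ behaviour off the origin from reducedness of $D(f)$, and then matching the Marar--Mond $D^k$-criteria). That is accurate: your outline identifies the right obstacle and the right tool, but it is a plan rather than a proof. If you intend this as a self-contained argument, the step ``a corank $2$ point forces a non-reduced component or a non-isolated singularity of $D(f)$'' needs to be carried out explicitly (this is exactly the computation done in \cite{[11]}), and likewise the passage from ``$D(f)$ smooth/reduced off $0$'' to ``all multi-germs are stable'' requires checking the higher multiple-point spaces $D^k(f)$, not just $D(f)$.
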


Now, since we are interested in studying quasihomogeneous map germs in this work, it is appropriate to present a precise definition of this object.

\begin{definition}
    A polynomial $p(x_1,\dots,x_n)$ is said to be quasihomogeneous if there exists positive integers $w_1,\dots,w_n$ with no common factor and a positive integer $d$ such that $$p(k^{w_1}x_1,\dots,k^{w_n}x_n)=k^dp(x_1,\dots,x_n).$$ The integer $w_i$ is called the weight of the variable $x_i$ and $d$  is called the weighted degree of $p.$ In this case, we say that $p$ is of type $(d;w_1,\dots,w_n).$ We say that $f:(\mathbb{C}^n,0)\rightarrow(\mathbb{C}^p,0)$  is a quasihomogeneous map germ of type $(d_1,\dots,d_p;w_1,\dots,w_n)$ if each coordinate function $f_i$ is quasihomogeneous of type $(d_i;w_1,\dots,w_n).$ In particular, when $(n,p)=(2,3),$ we have that $f$ is quasihomogeneous of type $(d_1,d_2,d_3;w_1,w_2).$
\end{definition}

	\section{The notion of a $\mu_{\textbf{m},\textbf{k}}$-minimal plane curve germ}\label{sec3}
	
	$ \ \ \ \ \ $ The following lemma provides a formula for the Milnor number of an irreducible plane curve $(X,0)$ in terms of its characteristic exponents.

    \begin{lemma}\label{Lema1}
        Let $(X,0)$ be an irreducible plane curve germ of multiplicity $m\ge2,$ and let $e_0=m, e_1,\dots,e_k$ be the characteristic exponents of $(X,0)$. Denote $b_0=m$ and $b_i=gcd(e_0,e_1,\dots,e_i),$ for each $i\in\left\{1,\dots,k\right\}.$ Then \begin{equation}\label{formmu}
            \mu(X,0)=\sum_{i=1}^k(b_{i-1}-b_i)(e_i-1).
        \end{equation}
    \end{lemma}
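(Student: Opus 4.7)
The plan is to reduce the statement to Milnor's classical identity relating $\mu$ and the delta invariant for an irreducible plane curve, and then compute $\delta$ in terms of the semigroup of values attached to the Puiseux parametrization.

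First I would recall that for an irreducible plane curve germ one has $\mu(X,0)=2\delta(X,0)$ (this is the specialisation of Milnor's formula $\mu=2\delta-r+1$ to $r=1$). Next, I would fix a Puiseux parametrization $\varphi(u)=(u^{m},y(u))$ of $(X,0)$ provided by Theorem \ref{puiseux} and consider the valuation $v(g):=\mathrm{ord}_{u}(g\circ\varphi)$ on $\mathcal{O}_{X,0}$. Its image $\Gamma\subset\mathbb{N}_{0}$ is the semigroup of values, and by Ap\'ery's theorem $\delta(X,0)=\#(\mathbb{N}_{0}\setminus\Gamma)$, so the whole problem translates into computing the number of gaps of $\Gamma$.

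The key structural input is that $\Gamma$ is minimally generated by $\bar\beta_{0},\bar\beta_{1},\dots,\bar\beta_{k}$, where $\bar\beta_{0}=e_{0}=m$, $\bar\beta_{1}=e_{1}$, and
\begin{equation*}
\bar\beta_{i}\;=\;\frac{b_{i-2}}{b_{i-1}}\,\bar\beta_{i-1}+e_{i}-e_{i-1}\qquad (i\ge 2),
\end{equation*}
the $b_{i}$ being the g.c.d.'s introduced just before the lemma. This is the classical correspondence between characteristic exponents and the generators of the semigroup of an irreducible plane curve, which follows from a careful analysis of the orders of vanishing of $u^{m}$, $y(u)$ and their successive ``approximate roots'' along $\varphi$. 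Given this description, a direct induction on $i$ (the so-called ``Ap\'ery set argument'') yields
\begin{equation*}
2\,\#(\mathbb{N}_{0}\setminus\Gamma)\;=\;\sum_{i=1}^{k}(b_{i-1}-b_{i})(e_{i}-1),
\end{equation*}
which, combined with $\mu=2\delta$, gives the desired formula.

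An alternative, perhaps more elementary, route would be to proceed by induction on the number $k$ of characteristic exponents. The base case $k=1$ is the topologically quasihomogeneous cusp $y^{m}=x^{e_{1}}$ with $\gcd(m,e_{1})=1$, for which the Milnor number is the classical $(m-1)(e_{1}-1)=(b_{0}-b_{1})(e_{1}-1)$ directly from (\ref{miln}). The inductive step would be handled by a blowing-up argument: the strict transform of $(X,0)$ under a suitable toric modification adapted to the first Puiseux pair is a curve with one fewer characteristic exponent, and comparing Milnor numbers under the modification contributes exactly the term $(b_{k-1}-b_{k})(e_{k}-1)$. The main obstacle in either route is the combinatorial bookkeeping: in the first, verifying the semigroup generators and counting gaps; in the second, matching the contribution of the blow-up with the last summand. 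Both reductions are standard, and I would lean on the semigroup proof as given, for instance, in \cite{Greuel}.
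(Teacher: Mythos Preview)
Your proposal is correct and follows essentially the same route as the paper. The paper's proof consists solely of a citation to Theorem~6.12 and Proposition~7.5 of \cite{Abramo}, which are precisely the ingredients you outline: the formula for the conductor (equivalently $2\delta$) of the value semigroup in terms of the characteristic exponents, together with the identity $\mu=2\delta$ for an irreducible plane branch. Your sketch simply unpacks the content of that citation, and your alternative blow-up induction is a well-known variant leading to the same combinatorics.
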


    \begin{proof}
        The proof essentially follows by Theorem 6.12 and Proposition 7.5 of \cite{Abramo}.
    \end{proof}

\begin{example}\label{exx}
(a) Let us verify the effectiveness of the formula \eqref{formmu} in Lemma \rm{\ref{Lema1}}. \textit{Consider the curve $(X,0)=(V(h),0),$ where $h:(\mathbb{C}^2,0)\rightarrow(\mathbb{C},0)$ is defined by \begin{eqnarray*}
        h&=&y^8-2x^5y^4-8x^4y^5-26y^9+x^{10}-8x^9y+12x^8y^2+52x^5y^5+208x^4y^6\\
        &&-x^{11}-26x^{10}y+208x^9y^2-312x^8y^3+26x^{11}y.
    \end{eqnarray*} Note that $\varphi(u)=(u^8,u^{10}+u^{11})$ parametrizes the irreducible plane curve germ $(X,0).$ In the notation of Lemma} \rm{\ref{Lema1}}, \textit{$b_0=e_0=8, e_1=10$ and $e_2=11,$ while $b_1=2$ and $b_2=1.$ Then, using formula \eqref{formmu}, it follows that $\mu(X,0)=(8-2)(10-1)+(2-1)(11-1)=64.$ On the other hand, calculating the Milnor number of $(X,0)$ by hand using Remark \ref{milnor} (b), is not straightforward.}
    
    \noindent \textit{(b) Consider the irreducible plane curve germ $(X,0)$ given by the parametrization $$\varphi(u)=(u^8,u^{12}+u^{14}+u^{15}).$$ We have $e_0=b_0=8, e_1=12, e_2=14$ and $e_3=15,$ while $b_1=4, b_2=2$ and $b_3=1.$ By Lemma} \rm{\ref{Lema1}}, \textit{we conclude that $$\mu(X,0)=(8-4)(12-1)+(4-2)(14-1)+(2-1)(15-1)=84.$$ Now let $\psi(u)=(u^8,u^{20}+u^{22}+u^{21})$ be the parametrization of an irreducible plane curve germ $(Y,0).$ Again, by Lemma} \rm{\ref{Lema1}}, \textit{it follows that $\mu(Y,0)=140.$ Note that $\mu(X,0)<\mu(Y,0).$}
\end{example}

Now, we introduce the notion of a sequence of nested divisors.

    \begin{definition}
    (a) Let $m\ge2$ be an integer. We say that $$m=d_0>d_1>\cdots>d_{s-1}>d_s=1$$ is a sequence of nested divisors of length $s$ if $d_{i+1}$ divides $d_i$ for every $i=0,\dots,s-1.$

    \noindent (b) Define $\sigma:\mathbb{N}\rightarrow\mathbb{N}$ by $$\sigma(m):= \left\{\begin{array}{lc}
    1, & \text{if}\ \, m=1\\
    \text{the maximum length of a sequence of nested divisors of}\ m, & \text{if}\ \, m>1
    \end{array}\right.$$ Given an integer $m\ge2$, we say that a sequence of nested divisors is maximal if its length is $\sigma(m).$
\end{definition}

The following lemma not only proves that $\sigma$ is well-defined but also provides its characterization in terms of the prime factorization of $m.$ The proof is omitted, as this result follows directly from \cite[Th. 1.11]{Apostol}.

\begin{lemma}\label{comp}
    Let $m\ge2$ be an integer. Then:\\
    
        \noindent(a) Every sequence of nested divisors of $m$ can be extended to a maximal sequence.
        
        \noindent(b) Consider the prime factorization of $m,$ namely, $m=p_1^{\alpha_1}p_2^{\alpha_2}\cdots p_r^{\alpha_r}$ as its prime factorization, with $p_i<p_{i+1},$ for all $i=1,\dots,r-1.$ Then $\sigma(m)=\alpha_1+\alpha_2+\cdots+\alpha_r.$
    
\end{lemma}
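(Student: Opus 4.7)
The plan is to reduce the statement to unique factorization bookkeeping. Let $\Omega(m) := \alpha_1 + \cdots + \alpha_r$ denote the number of prime factors of $m$ counted with multiplicity; I will show that $\sigma(m)$ equals $\Omega(m)$ and that every nested divisor chain can be refined to reach this length.

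I start with part (b). Given any nested divisor sequence $m = d_0 > d_1 > \cdots > d_s = 1$, set $q_i := d_{i-1}/d_i$ for $i = 1, \ldots, s$. Each $q_i$ is an integer strictly greater than $1$, so $\Omega(q_i) \geq 1$, and the multiplicativity of $\Omega$ together with the telescoping identity $q_1 q_2 \cdots q_s = d_0/d_s = m$ yield $s \leq \sum_{i=1}^{s} \Omega(q_i) = \Omega(q_1 \cdots q_s) = \Omega(m)$. This gives the upper bound $\sigma(m) \leq \Omega(m)$. For the reverse inequality I exhibit an explicit chain of length $\Omega(m)$: list the prime factors of $m$ with multiplicity in any order as $\pi_1, \ldots, \pi_{\Omega(m)}$, set $d_0 := m$, and $d_i := d_{i-1}/\pi_i$. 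Each $d_i$ strictly divides $d_{i-1}$ and $d_{\Omega(m)} = 1$, so this is a valid nested divisor sequence, whence $\sigma(m) \geq \Omega(m)$. Combining the two bounds proves (b).

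For part (a), the bookkeeping above shows that a chain is maximal (i.e.\ of length $\Omega(m) = \sigma(m)$) precisely when every consecutive quotient $q_i$ is prime. Given an arbitrary chain, whenever some $q_i$ is composite, pick a prime $p$ dividing $q_i$ and insert $d_{i-1}/p$ strictly between $d_{i-1}$ and $d_i$; this preserves divisibility and increases the length by one. Since the length is bounded above by $\Omega(m)$, iterating this refinement terminates in a chain all of whose consecutive quotients are prime, hence of length $\sigma(m)$.

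No serious obstacle arises here: the lemma is essentially the observation that a chain of nested divisors of $m$ records the successive prime factors removed at each step, so the bound is tight exactly when each step removes a single prime. The only mildly delicate point is the termination of the refinement in part (a), which is immediate from the uniform upper bound established in part (b).
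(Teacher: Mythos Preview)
Your proof is correct. The paper does not give its own argument but simply refers the reader to \cite[Theorem 1.11]{Apostol} (unique factorization), so your self-contained elementary proof via the completely additive function $\Omega$ is exactly the kind of argument the citation is standing in for; there is nothing to compare beyond noting that you have written out what the paper leaves implicit.
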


\begin{proof}
    See, for instance, Theorem 1.11 in \cite{Apostol}.
\end{proof}

\begin{remark}
Clearly, $\sigma(m)=1,$ if and only if $m=1.$ Furthermore, maximal nested divisor sequences are not unique in general. In fact, take $m=30$ and note that $\sigma(30)=3.$ Finally, consider the sequences $$d_0=30, d_1=15, d_2=5, d_3=1$$ and $$d_0=30, d_1=6, d_2=2, d_3=1.$$ This leads us to consider which maximal divisor sequence is most appropriate for our purposes (see Definition \rm{\ref{min}}\textit{)}.
\end{remark}

The function $\sigma$ yields the maximal possible number of characteristic exponents for an irreducible plane curve with fixed multiplicity $m$. Let us consider the following lemma:

\begin{lemma}\label{max}
    Let $m\ge2$ be a fixed integer. Then a germ of an irreducible plane curve $(X,0)$ of multiplicity $m$ admits at most $\sigma(m)+1$ characteristic exponents.
\end{lemma}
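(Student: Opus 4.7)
\textbf{Proof plan for Lemma \ref{max}.} The strategy is to extract, from the $\gcd$-sequence $(b_j)_j$ associated with a Puiseux parametrization of $(X,0)$, a strictly decreasing sequence of divisors of $m$ ending at $1$, and then to bound its length using $\sigma(m)$. First I would fix a Puiseux parametrization $\varphi(u)=(u^m,\varphi_1(u))$ of $(X,0)$ as in Theorem \ref{puiseux}, write $\varphi_1(u)=\sum_{j\ge 1}a_ju^{\alpha_j}$ with $m<\alpha_1<\alpha_2<\cdots$ and $a_j\ne 0$, and recall that $b_0=m$ and $b_j=\gcd(m,\alpha_1,\dots,\alpha_j)$. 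Since $(X,0)$ is an irreducible plane curve germ with an isolated singularity, its Puiseux parametrization is primitive, so the sequence $(b_j)_j$ stabilizes at $1$.

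Next, using the construction that precedes Definition of the characteristic exponents, I would denote by $j_1<j_2<\cdots<j_{k-1}$ the indices at which $b$ strictly drops, so that $e_i=\alpha_{j_i}$ for $i=1,\dots,k-1$ and $e_0=m$, giving a total of $k$ characteristic exponents. Setting $c_0:=b_0=m$ and $c_i:=b_{j_i}$ for $i=1,\dots,k-1$, the key observation I would record is that the sequence
\begin{equation*}
m=c_0>c_1>c_2>\cdots>c_{k-1}=1
\end{equation*}
is strictly decreasing (by construction of the jump indices) and each $c_{i+1}$ divides $c_i$ (since $b_{j_{i+1}}$ is a $\gcd$ including all the terms appearing in $b_{j_i}$, hence divides $b_{j_i}$). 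Consequently this is a sequence of nested divisors of $m$ in the sense of the definition, of length $k-1$.

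From here the counting argument is immediate: by the definition of $\sigma(m)$ as the maximum length of such a sequence, we must have $k-1\le \sigma(m)$, and so the number of characteristic exponents of $(X,0)$ is at most $\sigma(m)+1$. The only point requiring a little care, and the place I would spend the most attention, is the bookkeeping between the indexing of the $\alpha_j$'s, the $b_j$'s, the jump points $j_i$, and the characteristic exponents $e_i$; once that is set up correctly, the nested-divisor argument is purely formal and no further obstacle remains.
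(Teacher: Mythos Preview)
Your proposal is correct and follows essentially the same approach as the paper: both extract from the $\gcd$-sequence $(b_j)_j$ a strictly decreasing chain of nested divisors of $m$ of length $k-1$ and then invoke the definition of $\sigma(m)$ to bound $k$. The only cosmetic difference is that the paper phrases this as a proof by contradiction (assuming $k>\sigma(m)+1$ and deriving a nested-divisor chain of length exceeding $\sigma(m)$), whereas you give the direct version; the underlying argument is identical.
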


\begin{proof}
    Indeed, suppose there exists a germ of an irreducible plane curve $(X,0)$ with $k$ characteristic exponents such that $k>\sigma(m)+1.$ Thus, there exists an integer $r\ge2$ such that $k=\sigma(m)+r.$ If $e_0=m, e_1,\dots,e_{\sigma(m)+r-2},e_{\sigma(m)+r-1}$ are the characteristic exponents of $(X,0)$, then
    \begin{eqnarray*}
    b_0&=&m\\
    b_1&=&gcd(b_0,e_1)=gcd(m,e_1)\\
    &\vdots&\\
    b_{\sigma(m)+r-2}&=&gcd(m,e_1,\dots,e_{\sigma(m)+r-3},e_{\sigma(m)+r-2})=gcd(b_{\sigma(m)+r-3},e_{\sigma(m)+r-2})\\
    b_{\sigma(m)+r-1}&=&gcd(m,e_1,\dots,e_{\sigma(m)+r-2},e_{\sigma(m)+r-1})=gcd(b_{\sigma(m)+r-2},e_{\sigma(m)+r-1})=1
    \end{eqnarray*} and we obtain the sequence of nested divisors of $m$ $$m=b_0>b_1>\cdots>b_{\sigma(m)+r-2}>b_{\sigma(m)+r-1}=1$$ with length $\sigma(m)+r-1>\sigma(m),$ contradicting the maximality of $\sigma(m).$ Therefore, $k\le\sigma(m)+1.$
\end{proof}\\

Motivated by the topology of the slices of the generic fibers in the counterexamples to Ruas’ conjecture, we introduce the notion of a \textit{$\mu_{\textbf{m},\textbf{k}}$-minimal plane curve germ}.

\begin{definition}\label{min}
Let $r,m_1,\dots,m_r,k_1,\dots,k_r,$ be fixed positive integers with $k_i\le\sigma(m_i)+1$ for every $i=1,\dots,r.$ Consider the set $\Lambda_{\textbf{m},\textbf{k}}$ of all plane curves with $r$ branches such that the $i$-th branch has multiplicity $m_i$ and $k_i$ characteristic exponents, for every $i=1,\dots,r,$ where $\textbf{m}=(m_1,\dots,m_r)$ and $\textbf{k}=(k_1,\dots,k_r).$ We say that $(X,0)\in\Lambda_{\textbf{m},\textbf{k}}$ is a $\mu_{\textbf{m},\textbf{k}}$-minimal plane curve if $$\mu(X,0)=\min_{(Y,0)\in\Lambda_{\textbf{m},\textbf{k}}}\mu(Y,0).$$ In particular, when $r=1,$ we write $\mu_{\textbf{m},\textbf{k}}$-minimal instead of $\mu_{m,k}$-minimal, where $m$ is the multiplicity and $k$ is the number of characteristic exponents of the curve.
\end{definition}

\begin{example}\label{ex8}
   (a) The cusp given by the parametrization $\varphi(u)=(u^2,u^3)$ is $\mu_{2,2}$-minimal.
   
   \noindent(b) In the Introduction, we presented the family $f_t(x,y)=(x,y^4,x^5y+xy^5+y^6+ty^7),$ which appears in \rm{\cite{Ruas}}. \textit{For each $t$, let $\gamma_t$ be the intersection of $f_t(\mathbb{C}^2)$ with a generic plane $H\subset\mathbb{C}^3.$ We have that $\gamma_t$ is $\mu_{4,3}$-minimal. However, $\gamma_0$ is not $\mu_{4,3}$-minimal.}
   
   \noindent\textit{(c) Example} \rm{\ref{exx}} \textit{(a) is an example of a $\mu_{8,3}$-minimal curve, while Example \rm{\ref{exx}} \textit{(b) is an example of a $\mu_{8,4}$-minimal curve} given by the parametrization $\varphi.$ Also in Example} \rm{\ref{exx}} \textit{(b), the plane curve given by $\psi$ is not $\mu_{8,4}$-minimal.}

   \noindent\textit{(d) Consider the plane curve germ $(X,0)=(X^1,0)\cup(X^2,0)$ where $(X^1,0)=(V(x^2-y^3),0)$ and $(X^2,0)=(V(y^2-x^3),0).$ One can verify that $(X,0)$ is $\mu_{(2,2),(2,2)}$-minimal.}
\end{example}

\subsection{Normal form of an irreducible $\mu_{\textbf{m},\textbf{k}}$-minimal plane curve germ}\label{normalform1}

$ \ \ \ \ \ $  From now on, we restrict our attention to the irreducible case, aiming to determine a suitable normal form for use with plane curve germs that are $\mu_{m,k}$-minimal. Clearly, if $m=1,$ then we are in the smooth case and a normal form for it is given by $\varphi(u)=(u,0).$ Therefore, it remains to study the case where $m\ge2.$ Let us now consider the result for $k=2.$

    \begin{proposition}\label{Teo1}
    Let $m\ge2$ be a fixed integer. Then, a normal form of a $\mu_{m,2}$-minimal irreducible plane curve germ is given by $\varphi(u)=(u^m,u^{m+1}).$
\end{proposition}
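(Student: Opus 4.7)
The plan is to reduce the minimization problem to a purely number-theoretic one via Lemma \ref{Lema1}, and then invoke the well-known fact that the characteristic exponents form a complete set of topological invariants for an irreducible plane curve germ.

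First, by Theorem \ref{puiseux}, any irreducible plane curve germ of multiplicity $m$ admits a Puiseux parametrization $\varphi(u)=(u^m,\varphi_1(u))$, where $\varphi_1(u)=\sum_{i\ge 1}a_iu^{\alpha_i}$ with $m<\alpha_1<\alpha_2<\cdots$. Assume the curve has exactly $k=2$ characteristic exponents $e_0=m$ and $e_1$. By the recursive definition of characteristic exponents via the sequence $b_i=\gcd(e_0,\dots,e_i)$, the condition that the sequence of nested divisors stabilizes precisely after the second step forces $b_0=m$ and $b_1=\gcd(m,e_1)=1$. In particular, $e_1$ must be an integer strictly greater than $m$ and coprime to $m$.

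Second, by formula \eqref{formmu} in Lemma \ref{Lema1}, the Milnor number is
\begin{equation*}
\mu(X,0)=(b_0-b_1)(e_1-1)=(m-1)(e_1-1).
\end{equation*}
Since $m$ is fixed, minimizing $\mu(X,0)$ is equivalent to minimizing $e_1$ subject to $e_1>m$ and $\gcd(m,e_1)=1$. The integer $m+1$ satisfies both conditions, as $\gcd(m,m+1)=1$, and clearly no smaller integer exceeds $m$. Hence the minimum of $\mu(X,0)$ over $\Lambda_{m,2}$ is $m(m-1)$, attained precisely when $(e_0,e_1)=(m,m+1)$.

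Finally, to conclude the statement about the normal form, I would invoke the classical fact (recorded, for instance, in \cite{Greuel}) that two irreducible plane curve germs are topologically equivalent if and only if they have the same characteristic exponents. The curve parametrized by $\varphi(u)=(u^m,u^{m+1})$ has characteristic exponents $(m,m+1)$, so any $\mu_{m,2}$-minimal germ is topologically equivalent to it, which gives the asserted normal form. The main obstacle I anticipate is the argument that the hypothesis of having exactly two characteristic exponents genuinely forces $\gcd(m,e_1)=1$; this requires a careful unwinding of the definition (if $\gcd(m,e_1)>1$, then further terms $a_iu^{\alpha_i}$ with $\alpha_i$ not divisible by that gcd would contribute additional characteristic exponents, and their absence must be justified), but this should be straightforward once one notes that a parametrization with only two distinct characteristic exponents necessarily reduces, after a suitable truncation, to the form $(u^m,u^{e_1}+\text{higher terms of exponent divisible by }\gcd(m,e_1))$.
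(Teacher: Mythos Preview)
Your proposal is correct and follows essentially the same route as the paper: both arguments apply Lemma~\ref{Lema1} to reduce the problem to minimizing $(m-1)(e_1-1)$ over integers $e_1>m$ with $\gcd(m,e_1)=1$, and then observe that $e_1=m+1$ achieves the minimum. Your final paragraph over-anticipates a difficulty that is not really there---the condition $b_{k-1}=1$ is built into the definition of characteristic exponents (the sequence of $b_i$'s always terminates at $1$), so for $k=2$ one has $\gcd(m,e_1)=b_1=1$ immediately---but this does not affect the validity of the argument.
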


\begin{proof}
    Let $(X,0)$ be the plane curve germ parametrized by $\varphi(u)=(u^m,u^{m+1}),$ and consider a germ of a plane curve $(Y,0)$ with multiplicity $m$ and two characteristic exponents, $m$ and $n,$ with $n\ge m+1.$ It follows from Lemma \ref{Lema1} that $\mu(Y,0)=(b_0-b_1)(e_1-1)$, where $b_0=m, b_1=gcd(m,n)=1$ and $e_1=n.$ Hence, \begin{equation}\label{1}
        \mu(Y,0)=(m-1)(n-1)\ge(m-1)m=\mu(X,0),
    \end{equation} where the last equality in \eqref{1} again follows by Lemma \ref{Lema1}, which concludes the proof.
\end{proof}

\begin{corollary}\label{primo}
    Let $(X,0)$ be a $\mu_{m,k}$-minimal irreducible plane curve germ. If $m$ is a prime integer, then $k=2$ and a normal form of $(X,0)$ is given by $\varphi(u)=(u^{m},u^{m+1}).$
\end{corollary}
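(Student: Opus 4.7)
The plan is to combine the three previously established tools already available in the paper: the prime-factorization formula for $\sigma$ in Lemma \ref{comp}(b), the sharp bound on the number of characteristic exponents from Lemma \ref{max}, and the explicit normal form for the two-exponent case proved in Proposition \ref{Teo1}. The corollary then reduces to showing $k=2$ and quoting Proposition \ref{Teo1}.

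First, I would compute $\sigma(m)$. Since $m$ is prime, its prime factorization has a single exponent equal to $1$, so by Lemma \ref{comp}(b) we get $\sigma(m)=1$. Applying Lemma \ref{max}, the irreducible plane curve germ $(X,0)$ admits at most $\sigma(m)+1=2$ characteristic exponents, giving the upper bound $k\leq 2$.

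Next, I would justify the matching lower bound $k\geq 2$. Here $m\geq 2$ because $m$ is prime, so $b_0=m\neq 1$. The construction of characteristic exponents rests on a Puiseux parametrization $(u^{m},\varphi_1(u))$, and for this to be an honest (primitive) parametrization of an irreducible plane curve germ one needs $\gcd(m,\alpha_1,\ldots,\alpha_j)=1$ for $j$ large, i.e.\ the sequence $(b_j)$ must drop from $m$ down to $1$. Hence at least one $e_i$ with $i\geq 1$ must occur, so $(X,0)$ has at least the two characteristic exponents $e_0=m$ and $e_1$, yielding $k\geq 2$.

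Combining both bounds gives $k=2$, and the statement about the normal form $\varphi(u)=(u^{m},u^{m+1})$ then follows directly from Proposition \ref{Teo1}. The only subtle step I anticipate is the lower bound $k\geq 2$: one must recall that the Puiseux parametrization is primitive (equivalently, the $b_j$ stabilize at $1$), for otherwise the map $u\mapsto(u^{m},\varphi_1(u))$ would factor through $u\mapsto u^{m}$ and fail to be generically injective. This point is built into the definition but is worth making explicit.
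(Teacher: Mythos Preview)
Your argument is correct, but it takes a slightly different route than the paper. The paper argues directly from the definition of characteristic exponents: since $m$ is prime and $e_1>m$ is the second characteristic exponent of $(X,0)$, one has $b_1=\gcd(m,e_1)=1$, so the sequence of $b_j$'s already terminates and $k=2$; then Proposition~\ref{Teo1} gives the normal form. Your version instead passes through the auxiliary function $\sigma$: you compute $\sigma(m)=1$ via Lemma~\ref{comp}(b), bound $k\le\sigma(m)+1=2$ via Lemma~\ref{max}, and then argue $k\ge2$ from primitivity of the Puiseux parametrization. Both arguments are short and valid; the paper's is more elementary (it needs nothing beyond the definition of the $b_j$'s and primality), while yours has the virtue of illustrating how Lemmas~\ref{comp} and~\ref{max} package the same information. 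Note incidentally that the upper bound $k\le\sigma(m)+1$ is already built into Definition~\ref{min}, so invoking Lemma~\ref{max} is not strictly necessary---but it does no harm.
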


\begin{proof}
    Since $m$ is prime, it follows that $gcd(m,e_1)=1,$ where $e_1$ is the second characteristic exponent of $(X,0).$ Therefore, $k=2$ and we obtain a normal form of $(X,0)$ by Proposition \ref{Teo1}.
\end{proof}\\

In particular, Corollary \ref{primo} ensures that, if $m=3,$ then a normal form of a $\mu_{3,2}$-minimal irreducible plane curve germ is given by $\varphi(u)=(u^3,u^4).$ Before presenting our first main result, let us consider the case of three characteristic exponents, which will provide insight into how to proceed in the general case.

\begin{proposition}\label{k3}
    Let $m\ge4$ be a composite integer and let $p$ be the smallest prime divisor of $m$. Then a normal form of the irreducible plane curve germ $(X,0)$ that is $\mu_{m,3}$-minimal is given by $$\varphi(u)=(u^m,u^{m+p}+u^{m+p+1}).$$
\end{proposition}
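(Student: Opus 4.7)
The plan is to apply the Milnor-number formula of Lemma \ref{Lema1} and then minimize the resulting expression over the admissible parameters. Let $(Y,0)$ be an arbitrary irreducible plane curve germ with multiplicity $m$ and exactly three characteristic exponents $e_0=m<e_1<e_2$. Set $b_0=m$, $b_1=\gcd(m,e_1)$ and $b_2=\gcd(b_1,e_2)=1$. Since the curve has three characteristic exponents, Lemma \ref{comp} forces $b_1>1$ and $b_1\mid m$. Lemma \ref{Lema1} then gives
\[
\mu(Y,0)=(m-b_1)(e_1-1)+(b_1-1)(e_2-1).
\]

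The second step is to minimize the right-hand side with $b_1$ held fixed. Because $b_1\mid m$ and $b_1\mid e_1$ with $e_1>m$, the smallest admissible value of $e_1$ is the next multiple of $b_1$ above $m$, namely $e_1=m+b_1$; note that indeed $\gcd(m,m+b_1)=b_1$. Since $b_1\mid e_1$, for the minimum of $e_2$ it suffices to take $e_2=e_1+1=m+b_1+1$, for which $\gcd(b_1,e_2)=\gcd(b_1,m+b_1+1)=\gcd(b_1,1)=1$ (using $b_1\mid m$). A direct calculation then gives
\[
(m-b_1)(m+b_1-1)+(b_1-1)(m+b_1)=m(m+b_1-2),
\]
and since the coefficients $m-b_1\ge 0$ and $b_1-1\ge 1$ in the expression for $\mu(Y,0)$ are nonnegative, any larger choice of $e_1$ or $e_2$ only increases $\mu(Y,0)$. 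Thus for fixed $b_1$, one has $\mu(Y,0)\ge m(m+b_1-2)$.

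To finish, I would minimize $m(m+b_1-2)$ over the divisors $b_1>1$ of $m$. The minimum value of $b_1$ is precisely the smallest prime divisor of $m$, i.e.\ $b_1=p$, yielding the lower bound $\mu(Y,0)\ge m(m+p-2)$ for every $(Y,0)\in\Lambda_{m,3}$. It remains to verify that the proposed $\varphi(u)=(u^m,u^{m+p}+u^{m+p+1})$ realizes this minimum: its associated integers are $b_0=m$, $b_1=\gcd(m,m+p)=\gcd(m,p)=p$ (since $p\mid m$), and $b_2=\gcd(p,m+p+1)=\gcd(p,1)=1$. Hence the parametrized curve lies in $\Lambda_{m,3}$ with characteristic exponents $m,\,m+p,\,m+p+1$, and Lemma \ref{Lema1} shows that its Milnor number equals $m(m+p-2)$, so it is $\mu_{m,3}$-minimal. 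I do not expect a serious obstacle; the only care needed is in checking the two gcd identities that pin down $b_1$ and $b_2$ for the normal form and in verifying that the monotonicity argument in $e_1,e_2$ gives a sharp bound rather than a strict inequality.
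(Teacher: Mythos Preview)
Your proof is correct but follows a somewhat different route from the paper's. The paper first replaces an arbitrary $(Y,0)$ by a curve $(Y_0,0)$ with characteristic exponents $m,\,e_1,\,e_1+1$ (reducing the last exponent to its minimum), then shows via a divisibility argument that $e_1\ge m+p$, and finishes by a case split on whether $e_1=m+p$ or $e_1>m+p$, invoking Lemma~\ref{Lema1} in the latter case. You instead write the Milnor number explicitly as $(m-b_1)(e_1-1)+(b_1-1)(e_2-1)$, bound it below by $m(m+b_1-2)$ using the sharp inequalities $e_1\ge m+b_1$ and $e_2\ge e_1+1$, and then minimize over the divisor $b_1>1$ of $m$. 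Your approach is more direct and yields the closed formula $\mu_{\min}=m(m+p-2)$ as an immediate byproduct; the paper's reduce-then-case-split scheme, while slightly less transparent here, is the template that is extended (with explicit deformation arguments) to the general $k$ case in Theorem~\ref{normal}. One small stylistic remark: when you speak of ``minimizing with $b_1$ held fixed'' it would be cleaner to say that for the given $(Y,0)$ one has $b_1=\gcd(m,e_1)$ and then derive the lower bound $e_1\ge m+b_1$ from $b_1\mid(e_1-m)$; the logic is the same, but this avoids the appearance that $b_1$ is a free parameter independent of $e_1$.
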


\begin{proof}
    We aim to show that the normal form $$\varphi(u)=(u^m,u^{m+p}+u^{m+p+1})$$ of $(X,0)$ is $\mu_{m,3}$-minimal. Let $(Y,0)$ be an irreducible plane curve germ of multiplicity $m$ with three characteristic exponents,  $e_0=m, e_1$ and $e_2.$ We want to show that $\mu(X,0)\le\mu(Y,0).$ To do this, consider an irreducible plane curve germ $(Y_0,0)$ with multiplicity $m$ and characteristic exponents $e_0=m, e_1$ and $e_1+1.$ Since $e_2>e_1$, by Lemma \ref{Lema1}, we have that $\mu(Y_0,0)\le \mu(Y,0).$ Thus, it is enough to show that $\mu(X,0)\le\mu(Y_0,0).$ Let us now examine the relationship between the characteristic exponents of $(Y_0,0)$ and $(X,0).$ First, we must have that $m+p\le e_1.$ Indeed, since $e_1>m,$ there exists $\alpha_1>0$ such that $e_1=m+\alpha_1.$ Now, denoting by $b_1=gcd(m,e_1),$ it follows that $b_1$ divides $\alpha_1.$ Moreover, $b_1>1.$ Then, if $q$ is the smallest prime divisor of $b_1,$ we must have $p\le q,$ since $q$ divides $m.$ Thus, we have $$m+p\le m+\alpha_1=e_1.$$ Now, let us examine the two cases: if $e_1=m+p,$ then $(X,0)$ and $(Y_0,0)$ have the same characteristic exponents and follows the result. If $e_1>m+p,$ then $e_1\ge m+p+1$ and by Lemma \ref{Lema1} we have that $\mu(X,0)\le\mu(Y_0,0).$ With this, we conclude the proof.
\end{proof}\\

Let us now present our first main result. We consider the case where $m\ge4$ is composite integer, which gives a normal form for $\mu_{m,k}-$minimal curves in the irreducible case.

\begin{theorem}\label{normal}
    Let $m\ge4$ be a composite integer with prime factorization $m=p_1p_2\cdots p_r$ with $p_1\le p_2\le\cdots\le p_r.$ If $k\in\left\{3,\dots,r+1\right\},$ then a normal form of the irreducible plane curve germ $(X,0)$ that is $\mu_{m,k}$-minimal is given by $$\varphi(u)=(u^m,u^{m+d_1}+u^{m+d_1+d_2}+\cdots+u^{m+d_1+d_2+\cdots+d_{k-2}}+u^{m+d_1+d_2+\cdots+d_{k-2}+1}),$$ where $$d_0=m>d_1>d_2>\cdots>d_{k-2}>d_{k-1}=1$$ is the nested divisor sequence of $m$ such that $d_i=p_1p_2\cdots p_{(k-1)-i}$ for $i=1,\dots,k-2.$
\end{theorem}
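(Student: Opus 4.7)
My plan is to reduce the theorem to a combinatorial optimization over nested divisor sequences of $m$ of length $k-1$, and then settle the combinatorics by a two-move exchange argument on ordered prime factorizations $m=\pi_1\cdots\pi_{k-1}$ with each $\pi_j\ge 2$. For the upper bound, $\varphi$ is a Puiseux parametrization (Theorem \ref{puiseux}) of an irreducible plane curve germ $(X,0)$ of multiplicity $m$; a direct calculation of the gcds $\gcd(m,\,m+d_1,\ldots,m+d_1+\cdots+d_j)=\gcd(m,d_1,\ldots,d_j)=d_j$ (using that $d_j$ divides both $m$ and each previous $d_i$) shows that $m=d_0>d_1>\cdots>d_{k-2}>d_{k-1}=1$ is exactly the gcd sequence of $(X,0)$, so the characteristic exponents of $(X,0)$ are $e_0=m$, $e_j=m+d_1+\cdots+d_j$ for $1\le j\le k-2$, and $e_{k-1}=m+d_1+\cdots+d_{k-2}+1$. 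Lemma \ref{Lema1} then yields $\mu(X,0)$ in closed form.

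For the lower bound reduction, let $(Y,0)$ be any irreducible plane curve germ of multiplicity $m$ with $k$ characteristic exponents, and denote its gcd sequence by $m=b_0>b_1>\cdots>b_{k-1}=1$ (a nested divisor sequence of length $k-1\le r$ by Lemma \ref{max}). By induction on $j$, for this fixed gcd sequence the smallest admissible value of the $j$-th characteristic exponent (subject to $\gcd(b_{j-1},e_j)=b_j$ and $e_j>e_{j-1}$) equals $m+b_1+\cdots+b_j$ for $j<k-1$ and $m+b_1+\cdots+b_{k-2}+1$ for $j=k-1$; the gcd condition on these values reduces to $\gcd(\pi_j,1)=1$ with $\pi_j:=b_{j-1}/b_j$. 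Substituting into Lemma \ref{Lema1} and performing summation by parts (telescope the $(m-1)$-part, then swap the order of summation in the remainder) yields the clean bound
\[
\mu(Y,0)\ \ge\ (m-1)^2+\sum_{j=1}^{k-1}b_j(b_{j-1}-1),
\]
with equality precisely when the characteristic exponents are chosen minimally.

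Such a nested divisor sequence corresponds bijectively to an ordered partition $B_1,\ldots,B_{k-1}$ of the multiset of prime factors of $m$ into non-empty blocks, via $\pi_j=\prod_{p\in B_j}p$ and $b_j=\pi_{j+1}\cdots\pi_{k-1}$. Two local moves suffice to minimize the right-hand side above. Move (i): if $|B_j|\ge 2$ for some $j\ge 2$, transferring any prime $q\in B_j$ to $B_{j-1}$ divides $b_{j-1}$ by $q$ while leaving every other $b_i$ unchanged, and the resulting change in the bound equals $b_{j-1}(1/q-1)(b_{j-2}-1+b_j)<0$, strictly decreasing it. Hence at the minimum, $B_2,\ldots,B_{k-1}$ are singletons. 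Move (ii): if $p\in B_a$, $q\in B_{a+1}$ and $p<q$, swapping $p$ with $q$ multiplies $b_a$ by $p/q<1$ and leaves every other $b_i$ unchanged, strictly decreasing the bound; so at the minimum every prime in $B_a$ exceeds every prime in $B_{a+1}$. These constraints together force $B_j=\{p_{k-j}\}$ for $j=2,\ldots,k-1$ and $B_1=\{p_{k-1},p_k,\ldots,p_r\}$, giving $b_j=p_1 p_2 \cdots p_{(k-1)-j}=d_j$. Therefore $\mu(Y,0)\ge\mu(X,0)$, establishing that $(X,0)$ is $\mu_{m,k}$-minimal with the stated normal form.

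The main technical obstacle I anticipate lies in the bookkeeping of the exchange step: one must verify that each local modification sends one valid nested divisor sequence of length $k-1$ to another (each new $\pi_j$ remains at least $2$, the new $b_j$'s remain strictly decreasing, and the divisibility relations $b_j\mid b_{j-1}$ are preserved) and that the sign of $\Delta\mu$ is correctly pinned down in the boundary cases $j=2$ and $j=k-1$. Everything else amounts to routine Puiseux bookkeeping and the telescoping identity above.
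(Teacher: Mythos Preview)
Your argument is correct, and it follows a genuinely different route from the paper's proof. The paper proceeds by a case analysis on the characteristic exponents of an arbitrary competitor $(Y,0)$: it first replaces $e_{k-1}$ by $e_{k-2}+1$, then compares $e_1$ with the target value $m+d_1$ and, in each sub-case, constructs an explicit one-parameter deformation of a Puiseux parametrization of $(Y_0,0)$ whose generic fibre has the characteristic exponents of $(X,0)$; the inequality $\mu(X,0)\le\mu(Y_0,0)$ then follows from the upper semicontinuity of the Milnor number in a flat family. Only the case $k=4$ is written out in detail, with the general $k$ declared analogous.

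Your approach replaces this geometric/deformation machinery by a two-stage combinatorial optimization. First, for a \emph{fixed} nested gcd sequence $(b_j)$ you bound each $e_j$ from below by $m+b_1+\cdots+b_j$ (using only $b_j\mid e_j$, $b_j\mid e_{j-1}$ and $e_j>e_{j-1}$), obtaining via summation by parts the closed form $(m-1)^2+\sum_j b_j(b_{j-1}-1)$. Second, you minimize this expression over all ordered factorizations $m=\pi_1\cdots\pi_{k-1}$ by two local exchange moves, each of which alters exactly one $b_j$ and whose sign is transparent. This avoids any appeal to deformation theory or semicontinuity, handles all $k$ uniformly, and makes the uniqueness of the minimizing gcd sequence explicit. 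The paper's method, on the other hand, is closer in spirit to the rest of the article (it is precisely the mechanism later recorded as Corollary~\ref{defmin}), and it yields as a by-product the concrete deformations from any $(Y,0)\in\Lambda_{m,k}$ to the $\mu_{m,k}$-minimal model.

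The bookkeeping concerns you flag are benign: in Move~(i) the hypothesis $|B_j|\ge 2$ guarantees the new $\pi_j\ge 2$, and $b_{j-2}-1+b_j\ge 1+1>0$ even at $j=2$; in Move~(ii) the swap of single primes keeps every $\pi_i$ an integer $\ge 2$, and the factor $b_{a-1}-1+b_{a+1}$ is positive even when $a=k-2$ (where $b_{a+1}=1$). Both moves therefore send valid length-$(k-1)$ nested divisor sequences to valid ones, and the finiteness of such sequences guarantees a minimizer to which your constraints apply.
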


\begin{proof}
    The case $k=3$ was established in Proposition \ref{k3}. Let’s do the case $k=4$ and the general case follows analogously with the appropriate adjustments. We aim to show that the normal form $$\varphi(u)=(u^m,u^{m+p_1p_2}+u^{m+p_1p_2+p_1}+u^{m+p_1p_2+p_1+1})$$ of $(X,0)$ is $\mu_{m,4}$-minimal. Let $(Y,0)$ be an irreducible plane curve germ of multiplicity $m$ with four characteristic exponents, say $e_0=m, e_1, e_2$ and $e_3.$ We want to show that $\mu(X,0)\le\mu(Y,0).$ To do this, consider an irreducible plane curve germ $(Y_0,0)$ with multiplicity $m$ and characteristic exponents $e_0=m, e_1, e_2$ and $e_2+1.$ Since $e_3>e_2,$ we know $e_3\ge e_2+1.$ Now, writing $b_0=m, b_1=gcd(m,e_1), b_2=gcd(m,e_1,e_2)$ which are the same $gcd's$ for the characteristic exponents of $(Y_0,0).$ Then, by Lemma \ref{Lema1}, we have:
    \begin{eqnarray*}
        \mu(Y,0)&=&(b_0-b_1)(e_1-1)+(b_1-b_2)(e_2-1)+(b_2-b_3)(e_3-1)\\
        &\ge&(b_0-b_1)(e_1-1)+(b_1-b_2)(e_2-1)+(b_2-b_3)((e_2+1)-1)\\
        &=&\mu(Y_0,0).
    \end{eqnarray*} Thus, it is enough to show that $\mu(X,0)\le\mu(Y_0,0).$ Let us now examine the relation between the characteristic exponents of $(Y_0,0)$ and $(X,0).$ First, we must have that $m+p_1p_2\le e_1.$ Indeed, since $e_1>m,$ there exists $\alpha_1>0$ such that $e_1=m+\alpha_1.$ Now, since $b_1=gcd(m,e_1),$ it follows that $b_1$ divides $\alpha_1.$ Moreover, we have $b_1>b_2>1,$ so $b_1$ must have at least two distinct prime divisors in its prime factorization. There exist primes $q_1, q_2$ with $q_1 \le q_2$ such that $q_i \mid b_1$ for $i=1,2$. Thus, these primes $q_1$ and $q_2$ must divide $m$ and it follows that $p_1\le q_1$ and $p_2\le q_2.$ Therefore, $$m+p_1p_2\le m+q_1q_2\le m+b_1\le m+\alpha_1=e_1.$$ Now, let us examine the two cases:\\
   
        \noindent\underline{Case 1}: If $e_1=m+p_1p_2,$ then we must have $e_2\ge m+p_1p_2+p_1.$ To verify this, it is sufficient to apply an argument analogous to the one used to ensure that $e_1\ge m+p_1p_2.$ Thus, if $e_2=m+p_1p_2+p_1,$ then $(X,0)$ and $(Y_0,0)$ have the same characteristic exponents and therefore the same Milnor number. If $e_2>m+p_1p_2+p_1,$ consider the normal form $$\phi(u)=(u^m,u^{m+p_1p_2}+u^{e_2}+u^{e_2+1})$$ of $(Y_0,0)$ and deform it by a 1-parameter family $\phi_t$ defined by $$\phi_t(u)=(u^m,u^{m+p_1p_2}+tu^{m+p_1p_2+p_1}+tu^{m+p_1p_2+p_1+1}+u^{e_2}+u^{e_2+1}).$$ Then it follows that $\mu(X,0)=\mu((Y_0)_t,0)\le\mu(Y_0,0),$ for sufficiently small $t\ne0.$\\

        \noindent\underline{Case 2}: If $e_1>m+p_1p_2,$ then we must have $e_1\ge m+p_1p_2+p_1,$ using the same reasoning as before. If $e_1=m+p_1p_2+p_1,$ we define the 1-parameter deformation $$\phi_t(u)=(u^m,tu^{m+p_1p_2}+u^{m+p_1p_2+p_1}+tu^{m+p_1p_2+p_1+1}+u^{e_2}+u^{e_2+1})$$ of the normal form $\phi$ of $(Y_0,0)$ and we obtain $\mu(X,0)=\mu((Y_0)_t,0)\le\mu(Y_0,0),$ for sufficiently small $t\ne0.$ If $e_1>m+p_1p_2+p_1,$ we use the 1-parameter deformation $$\phi_t(u)=(u^m,tu^{m+p_1p_2}+tu^{m+p_1p_2+p_1}+tu^{m+p_1p_2+p_1+1}+u^{e_1}+u^{e_2}+u^{e_2+1})$$ of the normal form $\phi$ of $(Y_0,0)$ and again conclude that $\mu(X,0)=\mu((Y_0)_t,0)\le\mu(Y_0,0).$\\
     
     With this, we conclude the case $k=4.$
\end{proof}

\begin{remark}
    (a) Note that, in addition to the normal form, we also present the characteristic exponents of the irreducible plane curve germs that are $\mu_{m,k}$-minimal. Thus, if $(X,0)\in\Lambda_{m,k}$ has a characteristic exponent different from those presented in Theorem \rm{\ref{normal}}\textit{, then $(X,0)$ is not $\mu_{m,k}$-minimal.}

    \noindent\textit{(b) In the proof of Theorem} \rm{\ref{normal}}\textit{, an interesting fact is that if $(X,0)$ is a plane curve germ of multiplicity $m$ and $k$ characteristic exponents (not necessarily $\mu_{m,k}$-minimal), it is always possible to deform $(X,0)$ into a $\mu_{m,k}$-minimal one. For example, consider the plane curve $(X,0)$ parametrized by $\varphi(u)=(u^4,u^6+u^9).$ This curve is not $\mu_{4,3}$-minimal. However, consider the 1-parameter unfolding of $\varphi$ given by $$\varphi_t(u)=(u^4,u^6+u^9+tu^{7}).$$ For $t\ne0$ sufficiently small, the plane curve germ $(X_t,0)$ parametrized by $\varphi_t$ is $\mu_{4,3}$-minimal.}
\end{remark}

\begin{corollary}\label{defmin}
    Let $(X,0)$ be an irreducible plane curve germ of multiplicity $m$ and $k$ characteristic exponents, say $e_0=m,e_1,\dots,e_{k-1}.$ If $\varphi$ denotes a normal form of $(X,0),$ then there exists a deformation $\varphi_t$ of $\varphi$ such that $\varphi_t$ is a normal form of a $\mu_{m,k}$-minimal irreducible plane curve germ, for all $t\ne0.$
\end{corollary}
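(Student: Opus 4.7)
The plan is to superimpose on $\varphi$ the template of the $\mu_{m,k}$-minimal normal form supplied by Theorem \ref{normal}, with each added monomial carrying the deformation parameter $t$. Writing $\varphi(u)=(u^m,\varphi_2(u))$ — where $\varphi_2$ has support equal to the characteristic exponents $e_1,\dots,e_{k-1}$ — and letting $d_0=m>d_1>\cdots>d_{k-1}=1$ be the nested divisor sequence with $d_i=p_1\cdots p_{(k-1)-i}$, define
\[
\epsilon_j := m+d_1+\cdots+d_j \quad (1\le j\le k-2), \qquad \epsilon_{k-1} := \epsilon_{k-2}+1,
\]
and set
\[
\varphi_t(u) \;:=\; \Bigl(u^m,\; \varphi_2(u)+t\sum_{j=1}^{k-1} u^{\epsilon_j}\Bigr).
\]
The degenerate cases are handled separately: if $m=1$ the statement is vacuous, and if $m$ is prime then Corollary \ref{primo} forces $k=2$, in which case one takes $\varphi_t(u)=(u^m,u^{e_1}+tu^{m+1})$.

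To verify that $\varphi_t$ is a normal form of a $\mu_{m,k}$-minimal curve for every sufficiently small $t\neq 0$, the plan is to compute the characteristic exponents of $\varphi_t$ and match them with those of the template, namely $m,\epsilon_1,\dots,\epsilon_{k-1}$. The crucial auxiliary statement — already latent in the case analysis of Theorem \ref{normal} — is a \emph{gap-avoidance} property: each characteristic exponent $e_i$ of $(X,0)$ lies in $\{\epsilon_1,\dots,\epsilon_{k-1}\}\cup(\epsilon_{k-1},\infty)$, i.e. it never sits strictly in the interior of any ``gap'' $(\epsilon_j,\epsilon_{j+1})$ with $1\le j\le k-2$. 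I would prove this by induction on $i$, using two ingredients: (a) the divisibility $b_i=\gcd(m,e_1,\dots,e_i)\mid (e_i-e_{i-1})$, combined with the prime-factor bound $b_i\ge d_i$ (since $b_i$ must admit a strictly decreasing chain of divisors of length $k-1-i$ down to $1$); and (b) a combinatorial lemma on divisors of $m$ asserting that whenever $b_i>d_i$ one actually has $b_i\ge d_i+d_{i+1}$, proved by a swap argument on the multiset of prime factors using $d_{i+1}=d_i/p_{(k-1)-i}$.

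Given gap-avoidance, the characteristic exponents of $\varphi_t$ are obtained by scanning its support in increasing order and tracking the running gcd $b$. Starting from $b_0=m$, at each $\epsilon_j$ the gcd drops from $d_{j-1}$ to $d_j$ (since $d_j\mid d_{j-1}$ and $d_j\mid m$ yield $\gcd(d_{j-1},\epsilon_j)=d_j$), producing the desired characteristic exponents. When some $e_i$ coincides with $\epsilon_j$, the combined coefficient $1+t$ is nonzero for $t\notin\{0,-1\}$; when $e_i>\epsilon_{k-1}$, the term $u^{e_i}$ appears only after $b$ has already reached $1$ and contributes no new drop. Hence the characteristic exponents of $\varphi_t$ are precisely $m,\epsilon_1,\dots,\epsilon_{k-1}$, and Theorem \ref{normal} identifies $\varphi_t$ as a normal form of a $\mu_{m,k}$-minimal irreducible plane curve germ. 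The main obstacle is the combinatorial lemma underlying gap-avoidance, which encodes the prime-factor structure of $m$ and must be stated and proved carefully in order to cover all possible configurations of $b_i$.
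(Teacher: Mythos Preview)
Your approach is essentially the same as the paper's. The paper's proof of this corollary is the single sentence ``The proof is carried out throughout the proof of Theorem~\ref{normal}'': in that proof, for each configuration of the $e_i$'s relative to the template exponents $\epsilon_j$, an explicit deformation is written down by adding the missing template monomials with coefficient $t$ (see the three displayed $\phi_t$'s in Cases~1 and~2 for $k=4$). Your uniform formula $\varphi_t(u)=(u^m,\varphi_2(u)+t\sum_j u^{\epsilon_j})$ simply packages all those case-by-case deformations into one expression, and your ``gap-avoidance'' property is exactly the content of the paper's repeated assertion ``using the same reasoning as before'' when it claims $e_1>\epsilon_1\Rightarrow e_1\ge\epsilon_2$, etc.

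One small remark: what you actually need for the computation of characteristic exponents of $\varphi_t$ is not the full gap-avoidance statement $e_i\in\{\epsilon_1,\dots,\epsilon_{k-1}\}\cup(\epsilon_{k-1},\infty)$, but only that any $e_i$ lying in a gap $(\epsilon_j,\epsilon_{j+1})$ satisfies $d_j\mid e_i$ (so the running gcd does not drop there). The stronger form you state does follow from the chain of inequalities $e_i-e_{i-1}\ge b_i\ge d_i$ together with $\epsilon_{k-1}-\epsilon_1=d_2+\cdots+d_{k-2}+1<d_1$, but your combinatorial lemma ``$b_i>d_i\Rightarrow b_i\ge d_i+d_{i+1}$'' alone does not obviously propagate to higher gaps; you would need to iterate it. This is exactly the level of detail the paper also leaves implicit (it only writes out $k=3,4$), so your sketch is at least as complete as the original.
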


\begin{proof}
    The proof is carried out throughout the proof of Theorem \ref{normal}.
\end{proof}\\

Below, we present Table \ref{table} with some examples of normal forms and their respective Milnor numbers for $\mu_{36,k}$-minimal plane curves $2\le k\le5$.

\begin{table}[H]
\centering
{\def\arraystretch{2.0}\tabcolsep=22pt 

\begin{tabular}{ c || c ||  c }

\hline 
\rowcolor{lightgray}
 \textbf{Plane curve} & \textbf{Normal form} & \textbf{Milnor number}  \\
			  
\hline

$\mu_{36,2}$-minimal  & $\varphi(u)=(u^{36},u^{37})$ & $1260$ \\ \hline
     
$\mu_{36,3}$-minimal    & $\varphi(u)=(u^{36},u^{38}+u^{39})$  & $1296$ \\
\hline

$\mu_{36,4}$-minimal & $\varphi(u)=(u^{36},u^{40}+u^{42}+u^{43})$ & $1372$   \\

\hline

$\mu_{36,5}$-minimal & $\varphi(u)=(u^{36},u^{48}+u^{52}+u^{54}+u^{55})$ & $1696$   \\

\hline
\end{tabular}
}
\caption{Normal forms and Milnor numbers for $\mu_{36,k}$-minimal plane curves}\label{table}
\end{table}

\subsection{Upper semicontinuity for $\mu_{\textbf{m},\textbf{k}}$-minimal plane curve germs}
 
$ \ \ \ \ \ $ Unlike invariants like the Milnor number or the multiplicity, the number of characteristic exponents of an irreducible plane curve does not have the upper semicontinuity property. That is, the number of characteristic exponents can increase under a deformation of the curve. For example, we can consider the curve given by the parametrization $\varphi(u)=(u^4,u^9)$ and the deformation of $\varphi$ given by $$\varphi_t(u)=(u^4,tu^6+tu^7+u^9).$$ Note that the plane curve parametrized by $\varphi$ has two characteristic exponents, while the plane curve parametrized by $\varphi_t$ has three for any $t\ne0.$ However, in the case of $\mu_{m,k}$-minimal irreducible plane curves, the number of characteristic exponents does acquire this upper semicontinuity property as a consequence of the normal form established in Theorem \ref{normal}, provided the family is equimultiple.

\begin{corollary}\label{upper}
    Let $(X,0)$ be an irreducible plane curve germ, with multiplicity $m\ge2,$ and $k$ characteristic exponents. Suppose that $(X,0)$ is $\mu_{m,k}$-minimal. Then:\\

    \noindent (a) If $m$ is composite and $k\ge4,$ then $\mu(X,0)=m(m-2+d_1)+\sum_{j=1}^{k-3}d_j(d_{j+1}-1),$ where $d_j$ is given as in Theorem \rm{\ref{normal}}\textit{, for all $j=1,\dots, k-3$.}

    \noindent \textit{(b) If $(Y,0)$ is another irreducible plane curve germ with multiplicity $m$ and $k'$ characteristic exponents, with $k<k',$ then $\mu(X,0)<\mu(Y,0)$.}

    \noindent \textit{(c) Let $\varphi:(\mathbb{C},0)\rightarrow(\mathbb{C}^2,0)$ be a Puiseux parametrizations of $(X,0)$ and consider $F=(\varphi_t,t)$ a 1-parameter unfolding of $\varphi.$ Denote by $(X_t,0)$ the image of $\varphi_t.$ Suppose that $(X_t,0)$ is irreducible for any sufficiently small $t$. If $m(X_t,0)$ is constant along the parameter space, then $ce(X_t,0)\le ce(X,0),$ for all sufficiently small $t.$}
\end{corollary}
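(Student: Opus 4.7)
The plan is to dispatch (a) by a direct computation from Lemma \ref{Lema1}, deduce (b) by showing that the map $k\mapsto \min_{(Y,0)\in\Lambda_{m,k}}\mu(Y,0)$ is strictly increasing in $k$, and finally obtain (c) from the classical upper semicontinuity of the Milnor number in an equimultiple family of plane curves combined with (b).

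For (a), I would take the Puiseux parametrization provided by Theorem \ref{normal} and read off the characteristic exponents as $e_0=m=d_0$ and $e_i=m+d_1+\cdots+d_i$ for $1\le i\le k-1$, using the convention $d_{k-1}=1$. The nestedness $d_{i+1}\mid d_i\mid m$ together with $e_i-e_{i-1}=d_i$ gives, by a one-line induction, $b_i:=\gcd(m,e_1,\dots,e_i)=d_i$ for every $i$. Substituting into Lemma \ref{Lema1} yields
\[
\mu(X,0)=\sum_{i=1}^{k-1}(d_{i-1}-d_i)\Bigl(m-1+\sum_{j=1}^{i}d_j\Bigr),
\]
and a summation-by-parts, combined with the telescope $\sum_{i=1}^{k-1}(d_{i-1}-d_i)=m-1$ and $d_{k-1}=1$, collapses the right-hand side to $m(m-2+d_1)+\sum_{j=1}^{k-3}d_j(d_{j+1}-1)$.

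For (b), since $\mu(Y,0)\ge \min_{(Z,0)\in\Lambda_{m,k'}}\mu(Z,0)$ by definition, it suffices to prove the strict monotonicity $\mu_{m,k}\text{-min}<\mu_{m,k+1}\text{-min}$ for every admissible $k\ge 2$. Writing $d_i$ (resp.\ $d'_i$) for the nested divisor sequence associated to $k$ (resp.\ $k+1$) by Theorem \ref{normal}, the explicit formula $d_i=p_1\cdots p_{(k-1)-i}$ yields $d'_{i+1}=d_i$ for $i\ge 1$ together with $d'_1=p_{k-1}d_1>d_1$. Plugging these identities into the expression from (a) and re-indexing gives
\[
\mu_{m,k+1}\text{-min}-\mu_{m,k}\text{-min}=m\,d_1(p_{k-1}-1)+d'_1(d_1-1),
\]
which is strictly positive because $p_{k-1}\ge 2$. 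The low transitions $k=2\to 3$ and $k=3\to 4$ would be handled separately using Propositions \ref{Teo1} and \ref{k3}; they produce the positive differences $m(p_1-1)$ and $mp_1(p_2-1)+p_1p_2(p_1-1)$, respectively.

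For (c), I would argue by contradiction. Suppose $ce(X_{t_0},0)>k$ for some arbitrarily small $t_0$ and set $k':=ce(X_{t_0},0)$. Since $m(X_{t_0},0)=m$, we have $(X_{t_0},0)\in\Lambda_{m,k'}$, so Definition \ref{min} gives $\mu(X_{t_0},0)\ge \mu_{m,k'}\text{-min}$. Part (b) then yields $\mu(X_{t_0},0)>\mu_{m,k}\text{-min}=\mu(X,0)$, contradicting the classical upper semicontinuity of the Milnor number in an equimultiple flat family of plane curve singularities (see, e.g., \cite{Greuel}). Hence $ce(X_t,0)\le k=ce(X,0)$ for all sufficiently small $t$. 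The main obstacle I anticipate is the summation-by-parts bookkeeping in (a) and the careful handling of the low-$k$ boundary cases in (b); once strict monotonicity has been established, (c) follows essentially formally.
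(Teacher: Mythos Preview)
Your proposal is correct and follows essentially the same route as the paper: part (a) is obtained from Lemma~\ref{Lema1} by identifying $b_i=d_i$ and telescoping, part (b) reduces to the strict monotonicity of $k\mapsto\mu_{m,k}\text{-min}$ and is checked via the formula from (a) together with the boundary cases $k=2,3$, and part (c) is the same contradiction with upper semicontinuity of $\mu$. The only difference is cosmetic: in (b) you write down the closed form $m\,d_1(p_{k-1}-1)+d'_1(d_1-1)$ for the jump, whereas the paper merely says ``use item (a) and verify''; your version is more informative but not a different argument.
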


\begin{proof}
    (a) Using the normal form established in Theorem \ref{normal} and the formula \eqref{formmu} from Lemma \ref{Lema1}, we obtain that
    \begin{eqnarray*}
        \mu(X,0)&=&\sum_{j=1}^{k-1}(d_{j-1}-d_j)(d_0+\dots+d_j-1)\\
        &=&\sum_{j=1}^{k-1}(d_{j-1}^2-d_j^2)+\sum_{j=0}^{k-3}d_j(d_{j+1}-1)-(d_0-1)\\
        &=&m(m-2+d_1)+\sum_{j=1}^{k-3}d_j(d_{j+1}-1).
    \end{eqnarray*}

    \noindent (b) It is sufficient to consider the case where $k'=k+1.$ If $k=2,$ by Lemma \ref{Lema1} we already know that $\mu(X,0)=m(m-1)$, and it can be verified that $\mu(Y,0)=m(m-2+d_1).$ Thus, note that $\mu(X,0)<\mu(Y,0).$ For $k=3,$ we have $\mu(X,0)=m(m-2+d_1)$ while $\mu(Y,0)=m(m-2+d_1)+d_1(d_2-1).$ Clearly, $\mu(X,0)<\mu(Y,0).$ For $k\ge4,$ it is sufficient to use item (a) and verify that $\mu(X,0)<\mu(Y,0)$.

    \noindent (c) We have that $(X_t,0)$ is an irreducible plane curve germ, for all $t$. Suppose that, in a sufficiently small neighborhood of the origin, $ce(X,0)<ce(X_t,0)$ for $ t\ne0.$ Using item (b), we obtain $\mu(X,0)<\mu(X_t,0),$ which contradicts the fact that the Milnor number has the upper semicontinuity property for sufficiently small $t$. Therefore, we must have $ce(X_t,0)\le ce(X,0).$
\end{proof}

\subsection{Normal form of a $\mu_{\textbf{m},\textbf{k}}$-minimal plane curve germ with several branches}\label{normalform}

$ \ \ \ \ \ $ Now we explore the reducible case of $\mu_{\textbf{m},\textbf{k}}$-minimal plane curves in order to present a normal form for this class of curves. To do this, we use the normal form obtained in the irreducible case. The following proposition will provide a characterization of $\mu_{\textbf{m},\textbf{k}}$-minimal plane curve germs to help us present their normal form. In what follows, we say that $(X^i,0)$ and $(X^j,0)$ are transversal if they does not have a common tangent. To obtain the next result, let us recall the notion of intersection multiplicity of $f$ and $g$ in $\mathbb{C}\left\{x,y\right\}$ at the origin, as in \cite{Greuel}. Essentially, given $f,g\in\mathbb{C}\left\{x,y\right\},$ the intersection multiplicity of $f$ and $g$ at the origin, denoted by $i(f,g),$ is defined by  $$i(f,g)=\dim_{\mathbb{C}}\dfrac{\mathbb{C}\left\{x,y\right\}}{\langle f,g\rangle}.$$

\begin{proposition}\label{Prop}
    Consider fixed positive integers $r,m_1,\dots,m_r,k_1,\dots,k_r$ with $r\ge2,$ set $\textbf{m}=(m_1,\dots,m_r)$ and $\textbf{k}=(k_1,\dots,k_r).$ Consider a plane curve germ $(X,0)=(X^1,0)\cup\dots\cup(X^r,0)$ with $(X^i,0)$ of multiplicity $m_i$ and $k_i$ characteristic exponents. Then, $(X,0)$ is $\mu_{\textbf{m},\textbf{k}}$-minimal, if and only if each $(X^i,0)$ is $\mu_{m_i,k_i}$-minimal.
\end{proposition}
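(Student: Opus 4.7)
The plan is to use the classical formula expressing the Milnor number of a reducible plane curve germ in terms of branch invariants:
\begin{equation}\label{milnorreducible}
\mu(X,0) = \sum_{i=1}^{r} \mu(X^i,0) + 2 \sum_{1 \le i < j \le r} i(X^i, X^j) - r + 1.
\end{equation}
This is the standard identity $\mu(X,0) = 2\delta(X,0) - r + 1$ combined with the branch decomposition $\delta(X,0) = \sum_i \delta(X^i,0) + \sum_{i<j} i(X^i, X^j)$, which cleanly separates the Milnor number of $(X,0)$ into branch contributions and pairwise-intersection contributions.

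Next I would invoke the standard lower bound $i(X^i, X^j) \ge m_i m_j$, with equality precisely when the branches $(X^i,0)$ and $(X^j,0)$ are transversal. Hence the second sum in \eqref{milnorreducible} is at least $\sum_{i<j} m_i m_j$, a quantity depending only on $\textbf{m}$, and the first sum is at least $\sum_{i=1}^{r} \mu_{m_i,k_i}^{\min}$, where $\mu_{m_i,k_i}^{\min}$ denotes the Milnor number of any $\mu_{m_i,k_i}$-minimal irreducible curve.

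For the forward direction I would argue contrapositively: if some branch $(X^i,0)$ is not $\mu_{m_i,k_i}$-minimal, one replaces it by an irreducible curve in $\Lambda_{m_i,k_i}$ of strictly smaller Milnor number, chosen (after a linear change of coordinates) to be transversal to the remaining branches; via \eqref{milnorreducible} this produces an element of $\Lambda_{\textbf{m},\textbf{k}}$ with strictly smaller Milnor number, contradicting minimality. For the reverse direction, assuming each $(X^i,0)$ is $\mu_{m_i,k_i}$-minimal together with pairwise transversality of the branches (which is automatically forced by $\mu_{\textbf{m},\textbf{k}}$-minimality, since otherwise the intersection sum in \eqref{milnorreducible} would strictly exceed $\sum_{i<j}m_im_j$), both sums in \eqref{milnorreducible} simultaneously attain their lower bounds, so $\mu(X,0)$ is minimal in $\Lambda_{\textbf{m},\textbf{k}}$.

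The subtle technical point is the role of the transversality condition, which is implicit in $\mu_{\textbf{m},\textbf{k}}$-minimality but not in the statement of the individual $\mu_{m_i,k_i}$-minimality of the branches. The main work is therefore to verify that one may always realize prescribed multiplicity and characteristic exponents with a transversal tangent configuration (via the Puiseux normal form of Theorem \ref{normal} composed with a generic linear change of coordinates) and to track how transversality is preserved, or can be restored, under the branch-by-branch replacement used in the contrapositive argument.
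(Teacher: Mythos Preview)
Your proposal is correct and follows essentially the same route as the paper: both arguments hinge on the Buchweitz--Greuel formula \eqref{milnorreducible} together with the bound $i(X^i,X^j)\ge m_im_j$, and both correctly observe that pairwise transversality of the branches must be added to make the reverse implication go through. The only cosmetic difference is that in the forward direction the paper lowers the Milnor number by \emph{deforming} the offending branch to a $\mu_{m_i,k_i}$-minimal one (via Corollary~\ref{defmin}), whereas you \emph{replace} it outright with a curve of smaller Milnor number placed transversally; both maneuvers yield the same strict inequality.
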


\begin{proof}
    Let us consider the case $r=2$ and the general case follows analogously. Consider the plane curve germ $(X,0)=(X^1,0)\cup(X^2,0)$, where each component $(X^i,0)$ is an irreducible germ with multiplicity $m_i$ and with $k_i$ characteristic exponents. By \cite[Corollary 1.2.3]{Buchweitz}, it follows that \begin{equation}\label{bg}
        \mu(X,0)=\mu(X^1,0)+\mu(X^2,0)+2i(X^1,X^2)-1.
    \end{equation} Suppose that $(X,0)$ is $\mu_{\textbf{m},\textbf{k}}$-minimal. If $(X^1,0)$ is not $\mu_{m_1,k_1}$-minimal, then by Corollary \ref{defmin}, given a parametrization $\varphi_1$ of $(X^1,0),$ there exists a deformation $(\varphi_1)_t$ of $\varphi_1$ such that $(X^1_t,0)$ is $\mu_{m_1,k_1}$-minimal for all $t\ne0$, where $(X^1_t,0)$ is the plane curve germ parametrized by $(\varphi_1)_t.$ Moreover, we know that $\mu(X^1_t,0)<\mu(X^1,0),$ for all $t\ne0.$ Hence, since this deformation of $(X^1,0)$ induces a deformation of $(X,0),$ it follows from \eqref{bg} that $\mu(X_t,0)<\mu(X,0)$ along this deformation for all $t\ne0,$ which contradicts the minimality of $\mu(X,0).$ Therefore, $(X^1,0)$ is $\mu_{m_1,k_1}$-minimal. By an analogous argument, one concludes that $(X^2,0)$ is $\mu_{m_2,k_2}$-minimal. Now, to show that $(X^1,0)$ and $(X^2,0)$ are transversal, consider $\varphi_1$ and $\varphi_2$ as parametrizations of $(X^1,0)$ and $(X^2,0),$ respectively. By performing a 1-parameter deformation of the curves such that, for sufficiently small $t\ne0$, $(X^1_t,0)$ and $(X^2_t,0)$ are transversal, it follows from \eqref{bg} that $$\mu(X,0)\ge\mu(X^1_t,0)+\mu(X^2_t,0)+2i(X^1_t,X^2_t)-1.$$ It follows from $i(X^1,X^2)=i(X^1_t,X^2_t)=m_1m_2,$ for sufficiently small $t\ne0$. Therefore, $(X^1,0)$ and $(X^2,0)$ are transversal. Conversely, suppose that $(X^i,0)$ is $\mu_{m_i,k_i}$-minimal for $i=1,2$ and that $(X^1,0)$ and $(X^2,0)$ are transversal. Given $(Y,0)=(Y^1,0)\cup(Y^2,0)\in\Lambda_{\textbf{m},\textbf{k}},$ by the minimality of the Milnor number of $(X^i,0),$ it follows that $\mu(X^i,0)\le\mu(Y^i,0),$ for $i=1,2.$ Moreover, since $i(Y^1,Y^2)\ge m_1m_2=i(X^1,X^2),$ we conclude from \eqref{bg} that $\mu(X,0)\le\mu(Y,0).$ Therefore, $(X,0)$ is $\mu_{\textbf{m},\textbf{k}}$-minimal. For the general case, it is sufficient to observe that in \cite[Corollary 1.2.3]{Buchweitz} we have $$\mu(X,0)=1+\left(\sum_{1\le i\le r}(\mu(X^i,0)-1)\right)+2\sum_{1\le i<j\le r}i(X^i,X^j).$$ Thus, with analogous arguments, we conclude the result.
\end{proof}\\

Using Proposition \ref{Prop}, we obtain a normal form for $\mu_{\textbf{m},\textbf{k}}$-minimal plane curve germs, valid for $r\ge2.$

\begin{theorem}\label{Normal}
    Let $r\ge2$ be an integer and $(X,0)=(X^1,0)\cup\dots\cup(X^r,0)$ be a germ of plane curve such that each $(X^i,0)$ is of multiplicity $m_i$ and $k_i$ characteristic exponents, for all $i=1,\dots,r.$ Consider the multi-indices $\textbf{m}=(m_1,\dots,m_r)$ and $\textbf{k}=(k_1,\dots,k_r).$ If $(X,0)$ is $\mu_{\textbf{m},\textbf{k}}$-minimal, then $(X,0)$ admits an explicit normal form, given by the set of normal forms of all $(X^i,0).$
\end{theorem}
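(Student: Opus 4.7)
The plan is to reduce the problem to the irreducible case by invoking Proposition \ref{Prop}, and then to assemble the normal forms of the individual branches obtained in Theorem \ref{normal}, with a small linear adjustment to enforce transversality. Since $(X,0)$ is $\mu_{\mathbf{m},\mathbf{k}}$-minimal, Proposition \ref{Prop} supplies two structural facts at once: each $(X^i,0)$ is itself $\mu_{m_i,k_i}$-minimal, and the branches are pairwise transversal, so $i(X^i,X^j)=m_im_j$ for all $i\ne j$. This pins down the embedded topology of $(X,0)$ completely, since the topology of a reducible plane curve germ is determined by the characteristic exponents of its branches together with the pairwise intersection multiplicities (see \cite{Greuel}).

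Second, I would apply, branch by branch, the relevant result from this section: the trivial case $\varphi(u)=(u,0)$ when $m_i=1$; Proposition \ref{Teo1} when $k_i=2$; Corollary \ref{primo} when $m_i$ is prime; and Theorem \ref{normal} in all remaining cases $k_i\ge 3$ with $m_i$ composite. This delivers an explicit Puiseux normal form $\varphi_i(u)=(u^{m_i},\psi_i(u))$ for each $(X^i,0)$.

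Third, I would address the only subtlety: each $\varphi_i$ produced above is tangent to the $y$-axis at the origin, so naively taking the union of the $r$ images would yield $r$ branches all sharing the same tangent direction, which violates the transversality required by Proposition \ref{Prop}. The fix is to postcompose each $\varphi_i$ with a linear isomorphism $L_i\colon\mathbb{C}^2\to\mathbb{C}^2$ of the form $(x,y)\mapsto(x+c_iy,y)$, where the constants $c_1,\dots,c_r\in\mathbb{C}$ are chosen pairwise distinct. Each $L_i$ is a linear change of coordinates in the target and therefore preserves the characteristic exponents of the $i$-th branch, while the distinctness of the $c_i$'s guarantees that the $r$ tangent directions at the origin are pairwise different, hence the pairwise intersection multiplicities equal the expected minimum $m_im_j$. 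The announced explicit normal form of $(X,0)$ is then the collection $\{L_i\circ\varphi_i\}_{i=1}^{r}$.

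The only real obstacle is the verification that this assembled object has the prescribed topological type of $(X,0)$, but this is essentially a bookkeeping step: by construction the characteristic exponents of the branches and the pairwise intersection multiplicities match those of $(X,0)$, and the topological classification of plane curve germs implies that the resulting curve is topologically equivalent to $(X,0)$. The general case $r\ge 2$ needs no additional idea beyond the $r=2$ case treated in Proposition \ref{Prop}; one simply iterates the transversality argument using the corresponding additive formula for the Milnor number (as already recorded at the end of the proof of Proposition \ref{Prop}).
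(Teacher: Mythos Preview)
Your overall strategy matches the paper's proof exactly: invoke Proposition \ref{Prop} to get branch-wise minimality and pairwise transversality, apply the irreducible normal forms from Theorem \ref{normal} (together with the trivial smooth case), and then perturb each branch by a linear shear so that the tangent directions become distinct.

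However, there is a slip in the third step. The Puiseux parametrizations $\varphi_i(u)=(u^{m_i},\psi_i(u))$ with $\mathrm{ord}\,\psi_i>m_i$ are tangent to the $x$-axis, not the $y$-axis. Your shear $L_i(x,y)=(x+c_iy,y)$ fixes the line $\{y=0\}$, so after applying it the leading term of the first coordinate is still $u^{m_i}$ while the second coordinate still has order $>m_i$; all branches remain tangent to the $x$-axis and the pairwise intersection multiplicities stay strictly larger than $m_im_j$. The correct move is the transpose shear $(x,y)\mapsto(x,\,y+\alpha_i x)$, equivalently replacing $\psi_i(u)$ by $\alpha_i u^{m_i}+\psi_i(u)$ with the $\alpha_i$ pairwise distinct. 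This is precisely what the paper does, and with that correction your argument goes through verbatim.
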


\begin{proof}
 Let $(X,0)=(X^1,0)\cup\dots(X^r,0)$ be a $\mu_{\textbf{m},\textbf{k}}$-minimal plane curve germ. By Proposition \ref{Prop}, each $(X^i,0)$ is a $\mu_{m_i,k_i}$-minimal irreducible plane curve germ and the branches are pairwise transverse. Thus, for each $i=1,\dots,r,$ we have that $(X^i,0)$ admits an explicit normal form given by $\phi_i(u)=(u,0)$ for $m_i=1$ or, by Theorem \ref{normal}, $(X^i,0)$ admits an explicit normal form given by $$\phi_i(u)=(u^{m_i},u^{m_i+d_{1,i}}+u^{m_i+d_{1,i}+d_{2,i}}+\cdots+u^{m_i+d_{1,i}+d_{2,i}+\cdots+d_{k-2,i}}+u^{m_i+d_{1,i}+d_{2,i}+\cdots+d_{k-2,i}+1}),$$ where $$d_{0,i}=m>d_{1,i}>\dots>d_{k-2,i}>d_{k-1,i}=1$$ is a sequence of nested divisors of $m_i$ as in Theorem \ref{normal}, if $m_i\ge2.$ But from Proposition \ref{Prop}, we know that the branches of $(X,0)$ are pairwise transversal. Therefore, we can consider the normal forms of $(X^i,0)$ given by $\varphi_i(u)=(u,\alpha_iu),$ if $m_i=1,$ and $$\varphi_i(u)=(u^{m_i},\alpha_iu^{m_i}+u^{m_i+d_{1,i}}+u^{m_i+d_{1,i}+d_{2,i}}+\cdots+u^{m_i+d_{1,i}+d_{2,i}+\cdots+d_{k-2,i}}+u^{m_i+d_{1,i}+d_{2,i}+\cdots+d_{k-2,i}+1}),$$ if $m_i\ge2,$ with $\alpha_i\in\mathbb{C}$ for all $i=1,\dots,r,$ and with the $\alpha_i's$ pairwise distinct. In this way, we ensure the transversality of the branches of $(X,0)$ and the set $\left\{\varphi_i\right\}_{i=1}^r$ gives us a normal form of $(X,0),$ as desired.
\end{proof}

\begin{example}
    (a) In item (d) of Example \rm{\ref{ex8}}\textit{, we present the plane curve germ $(X,0)=(V(x^2-y^3),0)\cup(V(y^2-x^3),0),$ which is $\mu_{(2,2),(2,2)}$-minimal. For this curve germ, we can consider the normal form $\left\{\varphi_1,\varphi_2\right\}$ where $\varphi_1(u)=(u^2,u^3)$ is a normal form of the first branch and $\varphi_2(u)=(u^2,u^2+u^3)$ is a normal form of the second branch.}

    \noindent\textit{(b) Consider a plane curve germ $(X,0)$ $\mu_{(2,4),(2,3)}$-minimal. Thus, $(X^1,0)$ is $\mu_{2,2}$-minimal and $(X^2,0)$ is $\mu_{4,3}$-minimal. Moreover, $(X^1,0)$ and $(X^2,0)$ are transversal. Therefore, a normal form of $(X,0)$ is given by $\left\{\varphi_1,\varphi_2\right\}$ where $\varphi_1(u)=(u^2,u^3)$ and $\varphi_2(u)=(u^4,u^4+u^6+u^7).$}
\end{example}
	
\section{New counterexamples to Ruas’ conjecture}\label{newex}	
	
	$ \ \ \ \ $ In this section, we work with $f:(\mathbb{C}^2,0)\rightarrow(\mathbb{C}^3,0)$ a finitely determined, quasihomogeneous, corank 1 map germ. We also present an infinite list of counterexamples to the implication $(b) \Rightarrow (c)$ in Ruas’ conjecture. Furthermore, we present a counterexample showing that the equivalence between $(c)$ and $(d)$ in Ruas's conjecture does not hold, in general. However, as an application of the results obtained in the previous section, we begin this section by presenting some examples of map germs whose transverse slice of $f$ is $\mu_{m,k}$-minimal. To do this, we use results from \cite{slice}. In the sequel, we recall the notion of transverse slice, originally introduced in \cite{MararJuan}.

\begin{definition}\label{slice}
Let $f:(\mathbb{C}^2,0)\rightarrow(\mathbb{C}^3,0)$ be a finite map germ.\\

    \noindent (a) We say that a plane $H\subset\mathbb{C}^3$ through the origin is generic for $f$ if the following conditions are satisfied:\\
    
        (a.1) $H\cap df_0(\mathbb{C}^2)=\left\{(0,0,0)\right\},$ where $df_0(\mathbb{C}^2)$ denotes the image of the differential of $f$ at the origin.
        
        (a.2) $H\cap f(D(f))=\left\{(0,0,0)\right\},$ where $f(D(f))$ is the image of the double point curve $D(f)$ by $f.$
        
        (a.3) $H\cap C_0(f(D(f)))=\left\{(0,0,0)\right\},$ where $C_0(f(D(f)))$ denotes the Zariski tangent cone of $f(D(f)).$\\

        \noindent(b) We define a transverse slice of $f$, and denotes it by $\gamma,$ as the intersection of the image of $f$ with a generic plane $H\subset\mathbb{C}^3$ for $f.$
\end{definition}

\begin{remark}
    In general, the analytic type of the curve $\gamma$ may depend on the choice of the coordinates and the generic plane. However, the topological type does not depend on this choice (see \rm{\cite{Rudd}}\textit{) and it is usual to call it the transverse slice of $f.$}
\end{remark}

\begin{example}\label{exslice}
    Consider the finitely determined map germ $f:(\mathbb{C}^2,0)\rightarrow(\mathbb{C}^3,0)$ given by $f(x,y)=(x,y^2,xy).$ One verifies that the plane $H=V(X-Y)\subset\mathbb{C}^3$ is generic for $f.$
    \begin{figure}[H]
        \centering
        \includegraphics[scale=0.2]{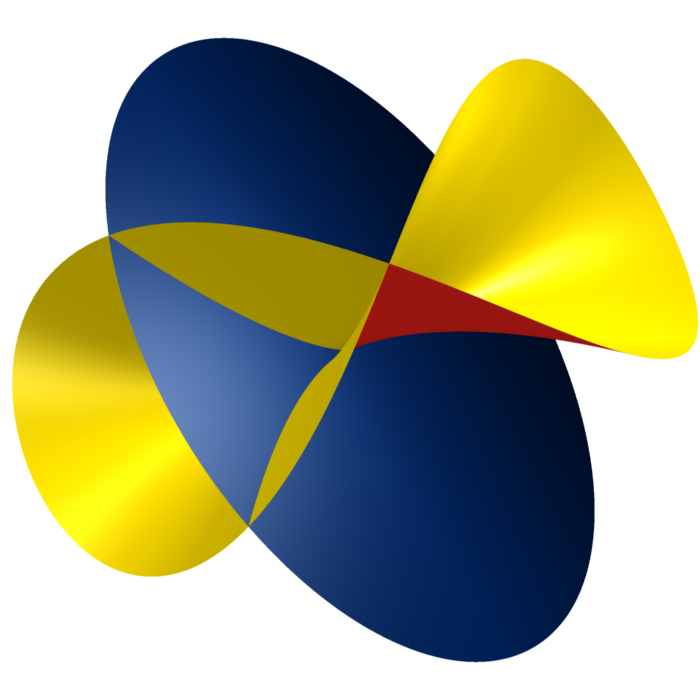}
        \caption{Transverse slice for the cross-cap (real points).}
        \label{Figure1}
    \end{figure} Denoting by $\widetilde{\gamma}$ the preimage of the slice $\gamma,$ that is, $\widetilde{\gamma}=f^{-1}(f(\mathbb{C}^2)\cap H)=f^{-1}(H),$ we obtain that $\widetilde{\gamma}=V(x-y^2).$ A parametrization for $\widetilde{\gamma}$ is given by $\widetilde{\phi}(u)=(u^2,u).$ Thus, we can parametrize the slice $\gamma$ of $f$ by composing $f$ with $\widetilde{\phi},$ that is, $$\phi(u)=f\circ\widetilde{\phi}(u)=(u^2,u^2,u^3)$$ is a parametrization of the slice $\gamma$ of $f.$ Performing a coordinate change in the target, it follows that $\phi(u)=(0,u^2,u^3)$ and $\gamma$ is topologically equivalent to a cusp.
\end{example}

\begin{remark}
   When working with unfoldings $F=(f_t,t)$, it is always possible to obtain a generic plane $H$ for $f$ such that $H$ is also generic for $f_t$ for all $t$ sufficiently small.
\end{remark}

In \cite{slice}, the first author characterized the topology of the transverse slice of a finitely determined, quasihomogeneous, corank 1 map germ from $(\mathbb{C}^2,0)$ to $(\mathbb{C}^3,0).$ The first author proved that the transverse slice has only two  or three exponents characteristics. By results in \cite{slice}, we consider the additional assumption that the transverse slice is $\mu_{m,k}$-minimal to obtain some information in this setting. We present examples with $\mu_{m,k}$-minimal transverse slice for each case.

    \begin{lemma}\label{slice1}
        Let $f:(\mathbb{C}^2,0)\rightarrow(\mathbb{C}^3,0)$ be a finitely determined, quasihomogeneous, corank 1 map germ. Write $f$ in quasihomogeneous form $$f(x,y)=(x,y^m+xp(x,y),y^n+xq(x,y))$$ of type $(w_1,d_2,d_3;w_1,w_2)$ with $d_2\le d_3,$ as in \rm{\cite[\textit{Lemma} 2.11]{normalform}}\textit{. Suppose that $w_1\le d_2$ and $gcd(m,n)=1.$ If the transverse slice $\gamma$ of $f$ is $\mu_{m,2}$-minimal, then $n=m+1$ and the characteristic exponents are $m$ and $m+1.$}
    \end{lemma}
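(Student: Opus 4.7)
The plan is to combine the results of \cite{slice} on the topology of the transverse slice with the normal form for $\mu_{m,2}$-minimal irreducible plane curves provided by Proposition \ref{Teo1}. The key observation is that the characteristic exponents of $\gamma$ are topological invariants of the slice, so it suffices to extract them from one suitable parametrization.

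First, I would select a generic plane $H$ adapted to the quasihomogeneous structure of $f$, for instance $H=V(X-\lambda Y)$ with $\lambda\in\mathbb{C}$ chosen so that the genericity conditions (a.1)–(a.3) of Definition \ref{slice} hold (using the finite determinacy of $f$ and the hypothesis $w_1\le d_2$). The preimage $\widetilde{\gamma}=f^{-1}(H)$ is then cut out by $x=\lambda\bigl(y^m+xp(x,y)\bigr)$, which by the implicit function theorem yields an analytic solution $x=x(y)=\lambda y^m+\text{higher order in }y$. Composing with $f$ and using the coordinates $(Y,Z)$ on $H$, one obtains a parametrization of $\gamma$ of the form
\[
u\longmapsto\bigl(u^m+\text{higher order terms},\ u^n+\text{higher order terms}\bigr),
\]
where the weighted-degree control coming from $w_1\le d_2\le d_3$ guarantees that the two coordinates vanish to orders exactly $m$ and $n$.

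Since $\gcd(m,n)=1$ by hypothesis, and since $m\ge 2$ together with $m\le n$ forces $m<n$, this Puiseux-type parametrization exhibits exactly two characteristic exponents, namely $m$ and $n$. On the other hand, by hypothesis, $\gamma$ is $\mu_{m,2}$-minimal, and Proposition \ref{Teo1} states that any such curve has characteristic exponents $m$ and $m+1$. Comparing the two descriptions yields $n=m+1$, which is the conclusion of the lemma.

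The main obstacle, as I see it, is the careful verification that the chosen plane $H$ is indeed generic for $f$ in the sense of Definition \ref{slice}—in particular the conditions (a.2) and (a.3) involving the double point curve $D(f)$ and its tangent cone—and the precise bookkeeping of weighted degrees needed to certify that the leading terms of the parametrization of $\gamma$ really are $u^m$ and $u^n$ rather than higher powers. Once these technical points are settled, the rest of the argument is a direct application of Proposition \ref{Teo1}.
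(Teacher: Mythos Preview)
Your approach is correct and follows the same logical skeleton as the paper: first identify the characteristic exponents of $\gamma$ as $m$ and $n$, then invoke Proposition \ref{Teo1} to force $n=m+1$. The only difference is that the paper obtains the first step in one line by quoting \cite[Proposition 3.10]{slice}, whereas you sketch a direct computation of the slice parametrization. The technical obstacles you flag (genericity of $H$, weighted-degree bookkeeping to ensure the leading orders are exactly $m$ and $n$) are precisely what that cited proposition packages, so once you appeal to it there is nothing further to verify.
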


    \begin{proof}
        By \cite[Prop. 3.10]{slice}, the characteristic exponents of the transverse slice $\gamma$ of $f$ are $m$ and $n.$ But $\gamma$ is $\mu_{m,2}$-minimal. Then the characteristic exponents of $\gamma$ are $m$ and $m+1,$ that is, $n=m+1.$
    \end{proof}

    \begin{example}
        We can find examples of transverse slice $\mu_{m,2}$-minimal as in Lemma \rm{\ref{slice1}} \textit{in Mond's list} \rm{\cite[\textit{p.} 378]{[8]}}. \textit{One can verify that cross-cap and $S_k$ (for $k\ge1$) singularities in Mond's list have $\mu_{2,2}$-minimal transverse slice. Furthermore, $T_4$ and $P_3$ singularities in Mond's list have $\mu_{3,2}$-minimal transverse slice. The other singularities in Mond’s list do not have $\mu_{m,2}$-minimal transverse slice.}
    \end{example}

    \begin{remark}
        Note that in Lemma \rm{\ref{slice1}} \textit{we can construct examples of quasihomogeneous map germs with $\mu_{m,2}$-minimal transversal slice, in which the weights may be equal or distinct. For example, consider the cross-cap map $f(x,y)=(x,y^2,xy).$ We have that $f$ is a finitely determined, homogeneous, corank 1 map germ with $\mu_{2,2}$-minimal transverse slice. On the other hand, consider the map germ $g(x,y)=(x,y^2,y^3+x^4y),$ the $S_3$ singularity in Mond's list} \rm{\cite[\textit{p.} 378]{[8]}}. \textit{This is a finitely determined, quasihomogeneous, corank 1 map germ with $w_1=1$ and $w_2=2,$ whose transverse slice is $\mu_{2,2}$-minimal.}
    \end{remark}

    \begin{lemma}\label{slice2}
        Let $f:(\mathbb{C}^2,0)\rightarrow(\mathbb{C}^3,0)$ be a finitely determined, quasihomogeneous, corank 1 map germ. Write $f$ in quasihomogeneous form $$f(x,y)=(x,y^m+xp(x,y),y^n+xq(x,y))$$ of type $(w_1,d_2,d_3;w_1,w_2)$ with $d_2\le d_3,$ as in \rm{\cite[\textit{Lemma} 2.11]{normalform}}. \textit{Suppose that $w_1> d_2$. If the transverse slice $\gamma$ of $f$ is $\mu_{m,2}$-minimal, then:}\\

        \noindent\textit{(a) the characteristic exponents of $\gamma$ are $m$ and $m+1$. Furthermore, $w_1=(n-1)w_2$ and $n>m+1$.}

        \noindent\textit{(b) we have that $p(x,y)=0$ and $xq(x,y)=c_1x^{w_2}y^{n-w_1}+\dots+c_{\theta}xy,$ for some $c_\theta\ne0.$}
    \end{lemma}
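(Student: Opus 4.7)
The plan is to combine the weight hypothesis with an explicit Puiseux-series computation for the transverse slice, and then apply Proposition \ref{Teo1} to read off the numerical constraints claimed.

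First, because $f_2=y^m+xp(x,y)$ has weighted degree $d_2=mw_2$ and every monomial $x^ay^b$ of $xp$ satisfies $a\ge 1$, hence weighted degree $aw_1+bw_2\ge w_1>d_2$, I conclude $xp=0$, so $f(x,y)=(x,y^m,y^n+xq(x,y))$. Using $\gcd(w_1,w_2)=1$, the quasihomogeneity of $f_3$ forces every monomial of $xq$ to have the form $c_k\,x^{kw_2}y^{n-kw_1}$ with $k\ge 1$ and $kw_1\le n$; set $\theta:=\lfloor n/w_1\rfloor$.

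Next I parametrize $\gamma$ through a generic plane $H=\{Z=\alpha X+\beta Y\}$ (the conditions of Definition \ref{slice} are open, hence hold for generic $\alpha,\beta$). The equation $y^n+xq(x,y)=\alpha x+\beta y^m$ admits, by the implicit function theorem applied to the non-vanishing factor $\alpha-q$, a unique convergent solution $x=x(y)$, so $\gamma\subset H$ is parametrized by $(x(y),y^m)$. After the target translation $X':=X+\tfrac{\beta}{\alpha}Y$ that absorbs the leading $-\tfrac{\beta}{\alpha}y^m$ of $x(y)$, iterating the equation yields
\[
X'=\frac{y^n}{\alpha}-\frac{\beta}{\alpha^2}\,y^m\,q(x(y),y)+(\text{higher order in }y).
\]
Substituting $x(y)\sim -\tfrac{\beta}{\alpha}y^m$ into $q$, the $k$-th monomial contributes the $y$-exponent $n-m+k(mw_2-w_1)$, which is strictly decreasing in $k$ (because $w_1>mw_2$); hence the dominant contribution comes from $k=\theta$, and the smallest $y$-exponent of $X'$ equals $n-\theta(w_1-mw_2)$.

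Now I invoke the hypothesis. By Proposition \ref{Teo1}, a $\mu_{m,2}$-minimal irreducible plane curve of multiplicity $m$ has characteristic exponents $m$ and $m+1$; since $\gcd(m,m+1)=1$, these are precisely $m$ together with the smallest new $y$-exponent of $X'$, so
\[
n-\theta(w_1-mw_2)=m+1.
\]
Combined with $\theta w_1\le n$, this forces $\theta mw_2\le m+1$, and since $m\ge 2$ and $w_2$ is a positive integer, the only possibility is $w_2=1$ and $\theta=1$, giving $w_1=n-1=(n-1)w_2$. The hypothesis $w_1>mw_2$ then rewrites as $n>m+1$, finishing (a). For (b), with $\theta=1$ and $w_2=1$ the only admissible monomial in $xq$ is $x^{w_2}y^{n-w_1}=xy$, so $xq=c_1\,xy$ with $c_1\neq 0$ (otherwise $q\equiv 0$ would make the slice exponents $(m,n)$ with $n>m+1$, contradicting minimality); this matches the stated form with $\theta=1$. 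The main technical difficulty is to justify carefully that no higher-order iteration contributes a $y$-exponent strictly smaller than $n-\theta(w_1-mw_2)$, which follows from systematic weighted-degree bookkeeping along the iteration and parallels the analysis behind \cite[Proposition 3.10]{slice}.
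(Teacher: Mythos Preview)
Your approach differs substantively from the paper's. There, part (a) is essentially a two-line application of \cite[Proposition~3.10]{slice}, which already supplies the closed formula ``second characteristic exponent $=[(d_3-w_2)m+w_1]/w_1$''; setting this equal to $m+1$ and using $d_3=nw_2$ gives $w_1=(n-1)w_2$ at once. Part (b) likewise cites \cite[Lemma~3.8]{slice} for the shape of $xq$ and the nonvanishing of the top coefficient $c_\theta$ (with $\theta=(n-1)/w_1$), then combines with (a) to get $\theta w_2=1$. You instead rebuild the slice parametrization by hand and read the exponent off the Puiseux series directly, which is more self-contained and in effect reproves those cited statements in this special situation.

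Two points in your write-up need tightening before the argument is complete. First, you identify the second characteristic exponent with the \emph{smallest} $y$-exponent appearing in $X'$, but these coincide only when that smallest exponent is not divisible by $m$; otherwise it is a removable term (absorbable by a further target change $X'\mapsto X'-cY^k$) and the genuine characteristic exponent lies deeper in the series. You should either check the non-divisibility directly or argue from the hypothesis: multiplicity $m$ forces $\mathrm{ord}\,X'\ge m$, and the second characteristic exponent being $m+1$ forces a nonzero $y^{m+1}$ coefficient in $X'$, which must then be produced by one of the terms you listed. Second, you set $\theta:=\lfloor n/w_1\rfloor$, the maximal \emph{admissible} index, and then assert that the dominant contribution comes from $k=\theta$; this tacitly assumes $c_\theta\neq 0$, which in the paper's route is supplied by finite determinacy via \cite[Lemma~3.8]{slice} and which you never invoke. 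Fortunately your algebra is robust to this slip: if you let $\theta'$ denote the largest $k$ with $c_k\neq 0$ (handling $q\equiv 0$ separately), the same chain $n-\theta'(w_1-mw_2)=m+1$ together with $\theta' w_1\le n$ still forces $\theta'=w_2=1$ and $w_1=n-1$, whence $\lfloor n/w_1\rfloor=1$ a posteriori. Making both points explicit would close the argument.
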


    \begin{proof}
        (a) By \cite[Prop. 3.10]{slice}, the characteristic exponents of the transverse slice $\gamma$ of $f$ are $m$ and $[(d_3-w_2)m+w_1]/w_1.$ But $\gamma$ is $\mu_{m,2}$-minimal. Then the characteristic exponents of $\gamma$ are $m$ and $m+1.$ Furthermore, we can have that $m+1=[(d_3-w_2)m+w_1]/w_1.$ Thus, $d_3-w_2=w_1.$ Since $d_3=nw_2,$ we obtain $w_1=(n-1)w_2.$ But $w_1>d_2=mw_2.$ Therefore, $n>m+1.$

        \noindent(b) By \cite[Lemma 3.8]{slice}, we have $p(x,y)=0$ and $xq(x,y)=c_1x^{w_1}y^{n-w_2}+\dots+c_{\theta}x^{\theta w_2}y$ with $c_\theta\ne0$ and $\theta=(n-1)/w_1.$ Since $w_1=(n-1)w_2,$ we conclude that $\theta w_2=1.$
    \end{proof}

    \begin{example}
        Let $m\ge2$ be a fixed integer. Consider the map germ $f(x,y)=(x,y^m,y^{m+2}+xy).$ We have that $f$ is a finitely determined, quasihomogeneous, corank 1 map germ of type $(m+1,m,m+2;m+1,1).$ Note that $f$ satisfies the assumptions in Lemma \rm{\ref{slice2}}. \textit{Therefore, the transverse slice of $f$ is $\mu_{m,2}$-minimal for all $m.$}
    \end{example}

    \begin{remark}
        Due to the assumption $w_1>d_2$ in Lemma \rm{\ref{slice2}}\textit{, no examples of finitely determined, homogeneous, corank 1 map germ with $\mu_{m,2}$-minimal transverse slice can be constructed.}
    \end{remark}

    \begin{lemma}\label{slice3}
        Let $f:(\mathbb{C}^2,0)\rightarrow(\mathbb{C}^3,0)$ be a finitely determined, quasihomogeneous, corank 1 map germ. Write $f$ in quasihomogeneous form $$f(x,y)=(x,y^m+xp(x,y),y^n+xq(x,y))$$ of type $(w_1,d_2,d_3;w_1,w_2)$ with $d_2\le d_3,$ as in \rm{\cite[\textit{Lemma} 2.11]{normalform}}. \textit{Suppose that $w_1\le d_2$ and $gcd(m,n)=2.$ If the transverse slice $\gamma$ of $f$ is $\mu_{m,3}$-minimal, then:}\\

        \noindent\textit{(a) the characteristic exponents of $\gamma$ are $w_1+1,w_1+3$ and $w_1+4,$ with $w_1$ odd, and $gcd(w_1,3)=1.$}

        \noindent\textit{(b) the weights of $f$ are distinct.}

        \noindent\textit{(c) $f$ is of the form $f(x,y)=(x,y^m+\alpha xy,y^{m+2}+\beta xy^3),$ for some $\alpha,\beta\in\mathbb{C},$ not both zero.}
    \end{lemma}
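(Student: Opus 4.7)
The strategy is to combine the classification of transverse slices of quasihomogeneous corank 1 map germs from \cite{slice} (the same result, \cite[Proposition 3.10]{slice}, already invoked in Lemmas \ref{slice1} and \ref{slice2}) with the normal form for $\mu_{m,3}$-minimal irreducible plane curves from Proposition \ref{k3}. First I would apply \cite[Proposition 3.10]{slice} in the remaining case $w_1\le d_2$ and $\gcd(m,n)=2$ to extract an explicit formula for the three characteristic exponents of $\gamma$ in terms of $m,n,w_1,w_2$. Since $\gcd(m,n)=2$ forces $m$ to be even, the smallest prime dividing $m$ is $p=2$, so Proposition \ref{k3} pins down the $\mu_{m,3}$-minimal characteristic exponents as exactly $(m,m+2,m+3)$. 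Matching this triple against the formula coming from \cite{slice} is the core algebraic step: it should force the multiplicity of $\gamma$ to satisfy $m=w_1+1$, and thereby produce the exponents $w_1+1,\,w_1+3,\,w_1+4$ claimed in (a). The parity of $w_1$ then follows from $m$ even, and $\gcd(w_1,3)=1$ emerges from the requirement that the three numbers $w_1+1,w_1+3,w_1+4$ actually form a valid characteristic sequence (i.e. that the successive $\gcd$'s strictly decrease: $\gcd(w_1+1,w_1+3)=2$ and $\gcd(2,w_1+4)=1$ must hold, together with the combinatorial constraint that none of the intermediate $\gcd$'s collapses too early).

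For (b), I would argue by contradiction: assume $w_1=w_2$. Then $d_2=(w_1+1)w_1$ and, via $n=(w_1+3)$ coming from the characteristic exponent computation, $d_3=nw_1=(w_1+3)w_1$. The weighted-degree identities for the monomials in $xp$ and $xq$ together with the hypothesis $w_1\le d_2$ would then force $w_1\mid n$; since $n$ is even and $\gcd(w_1,3)=1$ from part (a), this leads to a numerical contradiction. So the weights must be distinct.

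For (c), the approach is purely combinatorial once (a) and the value $m=w_1+1$ are in hand. Any monomial $x^a y^b$ in $xp(x,y)$ must satisfy $w_1(a+1)+w_2b=d_2=(w_1+1)w_2$ with $a\ge 1$, $b\ge 0$; similarly, monomials in $xq(x,y)$ satisfy $w_1(a+1)+w_2b=d_3=nw_2$. Enumerating the finitely many non-negative integer solutions under the parity and coprimality constraints on $w_1$ from (a), and then discarding every monomial whose presence would either enlarge the characteristic-exponent sequence past $(w_1+1,w_1+3,w_1+4)$ or shorten it below three exponents, should leave only the terms $\alpha xy$ in the middle coordinate and $\beta xy^3$ in the last coordinate, with at least one of $\alpha,\beta$ nonzero (otherwise $f$ is not finitely determined in corank 1).

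The main obstacle will be step (a): translating the quasihomogeneous weights into the precise Puiseux data of $\gamma$ when $\gcd(m,n)=2$. Unlike the coprime case in Lemma \ref{slice1}, one cannot simply read off the characteristic exponents from $m$ and $n$; the third exponent must come from a "mixed" contribution of $xp$ and $xq$ against a carefully chosen generic plane, and identifying exactly which monomial produces the leading odd-order correction (so as to match $m+3=w_1+4$) is where the real work lies. Once this contribution is pinned down by \cite[Proposition 3.10]{slice} and matched with Proposition \ref{k3}, parts (b) and (c) follow by essentially combinatorial bookkeeping with the weight equations.
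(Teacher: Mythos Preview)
Your overall strategy---apply \cite[Proposition 3.10]{slice} in the case $w_1\le d_2$, $\gcd(m,n)=2$, and match against the $\mu_{m,3}$-minimal triple from Proposition~\ref{k3}---is exactly what the paper does. The explicit output of \cite[Proposition 3.10]{slice} here is that the characteristic exponents of $\gamma$ are $d_2,\,d_3,\,d_2+d_3-w_1$; equating these to $m,\,m+2,\,m+3$ immediately yields $w_2=1$, $n=m+2$, and $w_1=m-1$. There are, however, two genuine gaps in your plan.

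First, your argument for $\gcd(w_1,3)=1$ does not work. You claim it follows from $(w_1+1,w_1+3,w_1+4)$ being a valid characteristic sequence, but for \emph{any} odd $w_1$ one has $\gcd(w_1+1,w_1+3)=2$ and $\gcd(2,w_1+4)=1$, so the sequence is always valid regardless of whether $3\mid w_1$ (e.g.\ $w_1=3$ gives the perfectly good sequence $4,6,7$). The paper instead uses that $\gcd(d_1,d_3)=1$, a fact coming from the finite-determinacy setup in \cite{slice}, and deduces $\gcd(w_1,w_1+3)=\gcd(w_1,3)=1$.

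Second, you never isolate the key fact $w_2=1$, which the paper obtains from \cite[Lemma 3.7]{slice} (the hypothesis $\gcd(m,n)=2$ forces $V(x)\subset D(f)$, hence $w_2=1$ and $w_1$ odd). Once $w_2=1$ is in hand, (b) is a single line: $w_1=m-1\ge 3>1=w_2$. And (c) is a direct monomial count: the weight equations $(m-1)a+b=m$ and $(m-1)a+b=m+2$ with $a\ge 1$ have only the solutions $(a,b)=(1,1)$ and $(1,3)$ once $m\ge 6$ (guaranteed by (a)), giving $xp=\alpha xy$ and $xq=\beta xy^3$; finite determinacy then excludes $\alpha=\beta=0$. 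Your contradiction argument for (b) and the monomial-discarding step in (c) are detours that, without $w_2=1$ established first, would not close cleanly.
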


    \begin{proof}
        (a) By \cite[Prop. 3.10]{slice}, the characteristic exponents of the transverse slice $\gamma$ of $f$ are $d_2,d_3$ and $d_2+d_3-w_1.$ Since $gcd(m,n)=2$, we have that both $d_2$ and $d_3$ are even. Moreover, $V(x)$ is a component of $D(f).$ Hence, by \cite[Lemma 3.7]{slice}, it follows that $w_2=1$ and $w_1$ is odd. Now, since $\gamma$ is $\mu_{m,3}$-minimal and $m$ is even, the characteristic exponents of $\gamma$ are $m, m+2$ and $m+3.$ Therefore, $d_2=m, d_3=m+2$ and $d_2+d_3-w_1=m+3.$ From the latter equality, we obtain $w_1=m-1.$ Thus, the characteristic exponents of $\gamma$ are $w_1+1,w_1+3$ and $w_1+4,$ with $w_1$ odd. Now, since $gcd(d_1,d_3)=1,$ we conclude that $gcd(w_1,w_1+3)=1,$ and hence $gcd(w_1,3)=1.$

        \noindent(b) Since $\gamma$ has three characteristic exponents, it follows that $m\ge3.$ Therefore, as $w_1=m-1,$ we have $w_1\ge2.$ Hence, $w_1\ne w_2.$

        \noindent(c) We know that $m\ge3.$ Thus, since $w_1=m-1$ and $w_2=1,$ observe that $y^m+xp(x,y)=y^m+\alpha xy$ and $y^n+xq(x,y)=y^{m+2}+\beta xy^3,$ for some $\alpha,\beta\in\mathbb{C}.$ If $\alpha=\beta=0,$ then the number of cross-caps of $f$, given by the codimension of the ramification ideal of $f$ in $\mathcal{O}_2,$ would be infinite, which contradicts the finite determinacy of $f.$ Therefore, $\alpha$ and $\beta$ cannot both be zero.
    \end{proof}

    \begin{remark}\label{ex3}
        In the Introduction, we presented the family $f_t(x,y)=(x,y^4,x^5y+xy^5+y^6+ty^7),$ one of the counterexamples to Ruas' conjecture, provided in \rm{\cite{Ruas}}. \textit{For $t=0,$ the transverse slice $\gamma$ of $f$ has three characteristic exponents, namely $4,6$ and $9$, and thus $\gamma$ in not $\mu_{4,3}$-minimal. In fact, in Lemma} \rm{\ref{slice3}}\textit{, we show that there do not exists finitely determined, homogeneous, corank 1 map germs $f:(\mathbb{C}^2,0)\rightarrow(\mathbb{C}^3,0)$ whose slice is $\mu_{m,3}$-minimal. Observe that although $f_t$ has a $\mu_{4,3}$-minimal transverse slice for $t\ne0,$ $f_t$ is not homogeneous, which does not contradict Lemma} \rm{\ref{slice3}}.
    \end{remark}

    \begin{example}
    By Lemma \rm{\ref{slice3}}\textit{, we know that all finitely determined, quasihomogeneous, corank 1 map germs from $(\mathbb{C}^2,0)$ to $(\mathbb{C}^3,0)$ with $\mu_{m,3}$-minimal transverse slice $\gamma$ are of the form $$f(x,y)=(x,y^m+\alpha xy,y^{m+2}+\beta xy^3),$$ for some $\alpha,\beta\in\mathbb{C}$ not both zero. For example, consider the corank 1 map germ $g:(\mathbb{C}^2,0)\rightarrow(\mathbb{C}^3,0)$ given by $$g(x,y)=(x,y^6+xy,y^8+2xy^3).$$ This is a quasihomogeneous map germ of type $(5,6,8;5,1),$ and one can verify that $g$ is finitely determined. Moreover, the transverse slice $\gamma$ of $g$ is $\mu_{6,3}$-minimal, with characteristic exponents $6,8$ and $9.$}
\end{example}

Before presenting a result that provides an infinite list of counterexamples for the implication $(W) \Rightarrow (Top)$ in Ruas’ conjecture, we first recall the relevant definitions of equisingularity for unfoldings. 

\begin{definition}\label{defwe} 
    Let $f:(\mathbb{C}^n,0)\rightarrow(\mathbb{C}^{n+1},0)$ be a finitely determined map germ, and let $F:(\mathbb{C}^n\times\mathbb{C},0)\rightarrow(\mathbb{C}^{n+1}\times\mathbb{C},0)$ a 1-parameter unfolding of $f$ given by $F(x,t)=(f_t(x),t)$ and $I(x,t)=(f(x),t)$ the trivial unfolding of $f.$ We say that $F$ is topologically equisingular or topologically trivial if there exist germs of homeomorphisms $$G:(\mathbb{C}^n\times\mathbb{C},0)\rightarrow(\mathbb{C}^n\times\mathbb{C},0)\, \ \text{and}\, \ H:(\mathbb{C}^{n+1}\times\mathbb{C},0)\rightarrow(\mathbb{C}^{n+1}\times\mathbb{C},0)$$ such that $G$ and $H$ are 1-parameter unfoldings of identity map germs from $n$-space and $n+1$-space, respectively, such that $I=H\circ F\circ G.$ Furthermore, if $G, G^{-1}, H$ and $H^{-1}$ are Lipschitz maps with respect to the outer metric, then $F$ is called bi-Lipschitz equisingular or bi-Lipschitz trivial.
\end{definition}

Let us recall the notion of Whitney equisingularity for 1-parameter unfoldings. Gaffney defined in \cite{Gaffney} a concept for a class of unfoldings called excellent unfoldings. These unfoldings possess a natural stratification whose stratum complementing the parameter space $T$ (which is a sufficiently small neighborhood of $0$ in $\mathbb{C}$ for a representative $F$) are stable both in the source and the target. In the source, the stratification is given by \begin{equation}\label{fonte}
        \left\{\mathbb{C}^2\times\mathbb{C}\setminus D(F), D(F)\setminus T, T\right\}
    \end{equation} while the stratification in the target is given by \begin{equation}\label{meta}
        \left\{\mathbb{C}^3\times\mathbb{C}\setminus F(\mathbb{C}^2\times\mathbb{C}),F(\mathbb{C}^2\times\mathbb{C})\setminus F(D(F)) ,F(D(F))\setminus T, T\right\}.
    \end{equation}

    \begin{definition}\label{whit}
    We say that $F$ is Whitney equisingular if the stratifications in the source and target described in \eqref{fonte} and \eqref{meta} are Whitney equisingular along $T.$
\end{definition}

    \begin{proposition}\label{contraexemplo}
        Let $f:(\mathbb{C}^2,0)\rightarrow(\mathbb{C}^3,0)$ be a finitely determined, quasihomogeneous, corank 1 map germ. Write $f$ in quasihomogeneous form $$f(x,y)=(x,y^m+xp(x,y),y^n+xq(x,y))$$ of type $(w_1,d_2,d_3;w_1,w_2)$ with $d_2\le d_3,$ as in \rm{\cite[\textit{Lemma} 2.11]{normalform}}. \textit{Suppose that the transverse slice has three characteristic exponents and $d_1<d_2-1.$ Then there exists a topologically trivial 1-parameter unfolding $F=(f_t,t)$ of $f$ such that $F$ is not Whitney equisingular.}
    \end{proposition}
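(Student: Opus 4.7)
The plan is to generalize the counterexample of \cite{Ruas}. Under the hypothesis that the transverse slice $\gamma$ of $f$ has three characteristic exponents, \cite[Proposition 3.10]{slice} (the general statement behind Lemma~\ref{slice3}) forces $w_2=1$, $\gcd(m,n)=2$ and $w_1$ odd, and identifies the three characteristic exponents of $\gamma$ as
\[
d_2,\qquad d_3,\qquad d_2+d_3-w_1.
\]
The inequality $w_1<d_2-1$ makes the last exponent strictly larger than $d_3+1$, so $\gamma$ is \emph{not} $\mu_{m,3}$-minimal in the sense of Theorem~\ref{normal}; lowering this exponent to $d_3+1$ would, by Lemma~\ref{Lema1}, decrease $\mu(\gamma)$ by exactly $d_2-w_1-1>0$.

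First I would exhibit the unfolding
\[
f_t(x,y):=\bigl(x,\ y^m+xp(x,y),\ y^n+xq(x,y)+t\,y^{n+1}\bigr),\qquad F=(f_t,t).
\]
The added monomial $y^{n+1}$ has weighted degree $(n+1)w_2=d_3+1>d_3$, so $F$ is a non-negative (indeed strictly positive) degree unfolding of $f$. Damon's theorem in \cite{Damon} then gives that $F$ is topologically trivial.

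Next, I would choose a plane $H\subset\mathbb{C}^3$ that is simultaneously generic for $f_0$ and for every $f_t$ with $|t|$ sufficiently small. Such $H$ exists because the conditions of Definition~\ref{slice} are open in $H$, the linear part $df_0$ is unchanged by the unfolding, and $f_t(D(f_t))$ together with its tangent cone vary continuously with $t$. Parametrizing $\widetilde{\gamma}_t:=f_t^{-1}(H)$ and composing with $f_t$ exactly as in Example~\ref{exslice}, one sees that the parametrization $\phi_t$ of $\gamma_t$ differs from $\phi_0$ only by the perturbation coming from $t\,y^{n+1}$, which contributes a monomial of weight $n+1=d_3+1$. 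Since $\gcd(d_2,d_3)=2$ while $\gcd(d_2,d_3,d_3+1)=1$, the characteristic exponents of $\gamma_t$ become $d_2,\,d_3,\,d_3+1$ for $t\neq 0$, and Lemma~\ref{Lema1} yields the strict drop
\[
\mu(\gamma_0)-\mu(\gamma_t)=d_2-w_1-1>0.
\]

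Finally, Whitney equisingularity of a $1$-parameter unfolding of a finitely determined corank $1$ map germ from $(\mathbb{C}^2,0)$ to $(\mathbb{C}^3,0)$ forces the topological type, and hence the Milnor number, of the transverse slice to be constant along the parameter; the strict drop above therefore rules out Whitney equisingularity of $F$, as required. The main obstacle I anticipate is in the third step: one must verify with care that no reparametrization can absorb the new monomial $t u^{d_3+1}$, so that $d_3+1$ really emerges as a characteristic exponent of $\gamma_t$. This amounts to repeating the slice computation of \cite[Section~3]{slice} in the presence of the extra summand, which should remain tractable thanks to the explicit quasihomogeneous control on $p$ and $q$.
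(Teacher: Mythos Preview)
Your approach is correct but differs genuinely from the paper's. The paper deforms the \emph{first} coordinate, taking
\[
f_t(x,y)=(x+ty^{m-2},\,y^m+xp,\,y^n+xq),
\]
which is of non-negative degree precisely because $d_1=w_1\le m-2=d_2-2$. Rather than computing the slice directly, the paper observes that $V(x)$ remains a component of $D(f_t)$ (since $m,n,m-2$ are all even, the involution $y\mapsto -y$ still identifies points), and that the image $f_t(V(x))$ drops in multiplicity from $m/2$ to $(m-2)/2$ when $t\ne 0$. The identity $\mu(\gamma_t,0)=2\,m(f_t(D(f_t)),0)$ from \cite[Lemma~5.2]{[11]} then forces $\mu(\gamma_t)$ to drop, so $F$ is not Whitney equisingular.

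Your route---perturbing the third coordinate by $ty^{n+1}$ and computing the slice parametrization directly---is equally valid and arguably more in the spirit of the $\mu_{m,k}$-minimal framework: you exhibit the slice $\gamma_t$ as having characteristic exponents $d_2,d_3,d_3+1$, witnessing explicitly that the deformation pushes the slice toward the $\mu_{m,3}$-minimal model. The verification you flag as the ``main obstacle'' does go through: since $\psi(u)=u^m+O(u^{2m-w_1})$ with $2m-w_1>m+1$, the reparametrization $u\mapsto v$ putting the first coordinate in the form $v^m$ satisfies $u=v+O(v^{m-w_1+1})$, so the second coordinate becomes $v^n+tv^{n+1}+O(v^{n+m-w_1})$ with $n+m-w_1>n+1$; the coefficient $t$ survives. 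The paper's approach trades this explicit slice calculation for a multiplicity argument on the double-point image, avoiding the reparametrization check but requiring instead the (stated but not detailed) verification that the other components of $f_t(D(f_t))$ keep constant multiplicity. Your computation also yields the exact drop $\mu(\gamma_0)-\mu(\gamma_t)=d_2-w_1-1$, which the paper's argument gives only as $\ge 2$.
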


    \begin{proof}
        By \cite[Prop. 3.10]{slice}, we know that $gcd(m,n)=2$ and $V(x)\subset D(f).$ Consider a 1-parameter unfolding $F=(f_t,t)$ given by $$f_t(x,y)=(x+ty^{m-2},y^{m}+xp(x,y),y^n+xq(x,y)).$$ By \cite[Lemma 3.7]{slice}, $w_1$ is odd and $w_2=1.$ Thus, $F$ is of non-negative degree and by \cite[Corollary 1]{Damon}, we conclude that $F$ is topologically trivial. Now, note that $V(x)\subset D(f_t).$ Furthermore, $f(0,y)=(0,y^m,y^n)$ while $f_t(0,y)=(ty^{w_1-2},y^m,y^n)$ for all sufficiently small $t\ne0,$ and we have $m(f_t(V(x)),0)<m(f(V(x)),0).$ One can verify that the multiplicity of the image of the another components of the $D(f_t)$ is constant. By additivity of the multiplicity, we have that $2m(f_t(D(f_t)),0)<2m(f(D(f)),0)$ for all sufficiently small $t\ne0.$ Now, by \cite[Lemma 5.2]{[11]} and \cite[Lemma 4.24]{[10]}, $\mu(\gamma_t,0)=2m(f_t(D(f_t)),0).$ Hence, $F$ is not Whitney equisingular.
    \end{proof}

    \begin{corollary}\label{corcontra}
        Let $f:(\mathbb{C}^2,0)\rightarrow(\mathbb{C}^3,0)$ be a finitely determined, homogeneous, corank 1 map germ. If the transverse slice of $f$ has three characteristic exponents, then there exists a topologically trivial 1-parameter unfolding $F=(f_t,t)$ of $f$ such that $F$ is not Whitney equisingular.
    \end{corollary}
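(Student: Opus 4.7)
The plan is to deduce the corollary as an immediate specialization of Proposition \ref{contraexemplo}. Since $f$ is homogeneous of corank $1$, the weights satisfy $w_1 = w_2 = 1$, so writing $f$ in the quasihomogeneous normal form from \cite[Lemma 2.11]{normalform} gives
\[
f(x,y) = (x,\, y^m + xp(x,y),\, y^n + xq(x,y))
\]
of type $(1, d_2, d_3; 1, 1)$ with $d_2 = m$ and $d_3 = n$. All that needs to be verified is the numerical condition $d_1 < d_2 - 1$ appearing in Proposition \ref{contraexemplo}, where $d_1 = w_1 = 1$ is the weighted degree of the first coordinate.

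The key step is to show that the transverse slice having three characteristic exponents forces $m \ge 4$. By \cite[Proposition 3.10]{slice} (used already in the proofs of Lemmas \ref{slice1}--\ref{slice3}), in the range $w_1 \le d_2$ the slice has three characteristic exponents only when $\gcd(m,n) = 2$, and its multiplicity is exactly $m$. Since the slice has three characteristic exponents, its $b$-sequence must be a strict chain $m = b_0 > b_1 > b_2 = 1$ of divisors of $m$, so $m$ cannot be prime; combined with $m$ even, this forces $m \ge 4$.

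Consequently $d_1 = 1 < m - 1 = d_2 - 1$, so the hypotheses of Proposition \ref{contraexemplo} are satisfied. Applying that proposition produces a topologically trivial $1$-parameter unfolding $F = (f_t, t)$ of $f$ which is not Whitney equisingular, which is exactly the conclusion of the corollary.

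I do not anticipate any real obstacle: the corollary is essentially a bookkeeping specialization. The only point that needs a brief justification is the lower bound $m \ge 4$, which follows from the combinatorics of characteristic exponents together with the parity information provided by \cite[Lemma 3.7]{slice}.
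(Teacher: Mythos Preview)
Your proposal is correct and follows essentially the same route as the paper: write $f$ in the normal form of type $(1,d_2,d_3;1,1)$, observe that three characteristic exponents force $m\ge 4$ (hence $d_1=1<d_2-1$), and then invoke Proposition~\ref{contraexemplo}. Your justification of $m\ge 4$ via the $b$-sequence and the parity coming from $\gcd(m,n)=2$ is a bit more explicit than the paper's, which simply asserts $d_2=m\ge 4$, but the argument is the same in substance.
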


    \begin{proof}
        Write $f$ in the normal form $$f(x,y)=(x,y^m+xp(x,y),y^n+xq(x,y))$$ of the type $(1,d_2,d_3;1,1)$ with $d_2\le d_3,$ as in \cite[Lemma 2.11]{normalform}. Since the transverse slice of $f$ has three characteristic exponents, then $d_2=m\ge4.$ Thus, by \cite[Prop. 3.10]{slice} we obtain that $d_1<d_2-1.$ Now we conclude the proof by Proposition \ref{contraexemplo}.
    \end{proof}\\

    In Table \ref{table2}, we provide new counterexamples to Ruas' conjecture in Proposition \ref{contraexemplo} setting. Furthermore, third column in Table \ref{table2} shows $\mu(\gamma_t,0)$ for $t\ne0.$

    \begin{table}[H]
\centering
{\def\arraystretch{2.0}\tabcolsep=22pt 

\begin{tabular}{ c || c ||  c  }

\hline 
\rowcolor{lightgray}
 \textbf{Family of map germs} & \textbf{$\mu(\gamma,0)$} & \textbf{$\mu(\gamma_t,0)$}  \\
			  
\hline

$f_t(x,y)=(x+ty^{14},y^{16}+xy^{15},y^{18}+xy^{17}+x^{17}y)$  & $270$ & $268$ \\ \hline
     
$f_t(x,y)=(x+ty^{14},y^{16}+xy^{13},y^{22}+xy^{19}+x^7y)$    & $328$   & $326$ \\
\hline

$f_t(x,y)=(x+ty^{14},y^{16}+xy^{11},y^{26}+xy^{21}+x^5y)$ & $386$ & $384$   \\

\hline

$f_t(x,y)=(x+ty^{14},y^{16}+xy^9,y^{22}+xy^{15}+x^3y)$ & $324$ & $322$   \\

\hline
\end{tabular}
}
\caption{New counterexamples to Ruas' conjecture}\label{table2}
\end{table}

To finish this section, in the sequel we present a proposition that guarantees that Whitney equisingularity does not imply bi-Lipschitz triviality, in general. 

\begin{proposition}\label{whitbil}
    Let $f:(\mathbb{C}^2,0)\rightarrow(\mathbb{C}^3,0)$ be the homogeneous map given by $f(x,y)=(x^{14},y^8,(x-y)(x-2y)(x-3y)).$ Consider $F=(f_t,t)$ the 1-parameter unfolding of $f$ such that $f_t(x,y)=(x^{14},y^8,(x-y)(x-2y)(x-3y)+ty^{12}).$ Then, $F$ is Whitney equisingular but $F$ is not bi-Lipschitz trivial.
\end{proposition}

\begin{proof}
    Using Software Singular \cite{singular}, one can verify that $f$ is finitely determined. Furthermore, since $F$ is of non-negative degree, by \cite[Cor. 1]{Damon} we conclude that $F$ is topologically trivial. Now, we need to proof that $\mu(\widetilde{\gamma}_t,0)$ and $m(f_t(D(f_t)),0)$ are constant along the parameter space, where $\widetilde{\gamma}_t$ is the pre-image of the transverse slice of $f_t$. Let $(\mathcal{X},\mathcal{Y},\mathcal{Z})$ be a coordinate system of $\mathbb{C}^3$ and $H=V(a\mathcal{X}+b\mathcal{Y}+c\mathcal{Z})\subset\mathbb{C}^3$ a generic plane of $f_t,$ for all $t$ sufficiently small. Note that $$\widetilde{\gamma}_t=f_t^{-1}(H)=V(ax^{14}+by^8+c((x-y)(x-2y)(x-3y)+ty^{12})).$$ Thus, $\mu(\widetilde{\gamma}_t,0)=4,$ for all sufficiently small $t.$ Furthermore, $D(f_t)$ has 313 components. In particular, $V(x-y), V(x-2y)$ and $V(x-3y)$ are fold components and others components are identification components, for all $t.$ Hence, we obtain that $m(f_t(D(f_t)),0)=477,$ for all sufficiently small $t.$ Since $\mu(\widetilde{\gamma}_t,0)$ and $m(f_t(D(f_t)),0)$ are constant along the parameter space, by \cite[Th. 5.3]{[11]}, we have that $F$ is Whitney equisingular. Now suppose that $F$ is bi-Lipschitz trivial. Then, the reduced curve family $F(D(F))\subset\mathbb{C}^3\times\mathbb{C}$ is bi-Lipschitz trivial. Moreover, for each $t,$ we have that $V(x-y)\subset D(f_t).$ A parametrization of $f_t(V(x-y))$ in $\mathbb{C}^3$ is given by $\varphi_t(u)=(u^{7},u^4,tu^{6}).$ Thus, we obtain the reduced curve family $$\Phi: \mathbb{C}^2\rightarrow\mathbb{C}^3\times\mathbb{C}, \ \ \Phi(t,u)=(u^7,u^4,tu^6,t).$$ Now, using \cite[Ex. 5.2]{Snoussi} and \cite[Cor. 3.6]{Giles} we conclude that this family can not be topologically trivial, which contradicts the assumption of bi-Lipschitz triviality. Hence, $F$ is not bi-Lipschtiz trivial.
\end{proof}\\

The next result provide a positive answer for the implication $(d)\Rightarrow(c)$ in Ruas' conjecture in corank 1 case.

\begin{theorem}\label{ab}
    Let $f:(\mathbb{C}^2,0)\rightarrow(\mathbb{C}^3,0)$ be a finitely determined, corank 1 map germ. Consider $F=(f_t,t)$ a 1-parameter unfolding of $f.$ If $F$ is bi-Lipschitz trivial, then $F$ is Whitney equisingular.
\end{theorem}

\begin{proof}
    Since $f$ has corank 1, by \cite[Lemma 5.2]{[11]} we have that $\mu(\gamma_t,0)=2m(f_t(D(f_t)),0),$ for all $t$ sufficiently small. Now, since $F$ is bi-Lipschitz trivial, the family of curves $F(D(F))$ is bi-Lipschitz trivial. Consider $p:(F(D(F)),0)\rightarrow\mathbb{C}^2\times\mathbb{C}$ a generic projection. Since $p$ is generic, we have $m(f_t(D(f_t)),0)=m(p(f_t(D(f_t))),0),$ for all $t$ sufficiently small. By the definition of bi-Lipschitz triviality, the family of plane curves $p(f_t(D(f_t)))$ is topologically trivial. Thus, this family is equimultiple. Therefore, $m(f_t(D(f_t)),0)$ is constant along the parameter space. Now, the result follows by \cite[Th. 5.3]{[11]}.
\end{proof}

Note that the Proposition \ref{whitbil} provide a counterexample for the implication $(c)\Rightarrow(d)$ in Ruas' conjecture in corank 2 case. For other hand, Theorem \ref{ab} guarantees that the implication $(d)\Rightarrow(c)$ in Ruas' conjecture holds in corank 1 case. Thus, there exists two cases to consider for the relation of Whitney equisingularity and bi-Lipschitz trivilaty in Ruas' conjecture: the corank 1 case in the implication $(c)\Rightarrow(d)$ and the corank 2 case in the implication $(d)\Rightarrow(c).$

\section{Zariski's multiplicity conjecture for quasihomogeneous map germs}\label{sec4}

$ \ \ \ \ $ In the sequel, we turn our attention to the study of the 1-parameter unfoldings of finitely determined, quasihomogeneous, corank 1 map germs from $(\mathbb{C}^2,0)$ to $(\mathbb{C}^3,0).$ We now aim to present an affirmative solution to Question 2 in our setting, as outlined at the outset of this section. Furthermore, we establish the validity of Zariski’s multiplicity conjecture for this class of map germs.

    \begin{lemma}\label{Teo2}
        Let $f:(\mathbb{C}^2,0)\rightarrow(\mathbb{C}^3,0)$ be a finitely determined, quasihomogeneous, corank 1 map germ. If  $F=(f_t,t)$  is a topologically trivial 1-parameter unfolding of $f=(f_1,f_2,f_3)$, then $F$ is of non-negative degree, that is, any additional term $\alpha$ in the deformation of $f_i$ has weighted degree not smaller than that of $f_i$.
    \end{lemma}

\begin{proof}
    Since $f$ has corank 1, we can write it in the normal form given by $f(x,y)=(x,p(x,y),q(x,y))$ for some $p,q \in \mathfrak{m}^2$, where $\mathfrak{m}$ denotes the maximal ideal of $\mathcal{O}_2$ (see \cite[Lemma 4.1]{mond85}). Clearly, the unfolding $F=(f_t,t)$ has corank 1. Therefore, in this case we can write 
\begin{equation*}
f_t(x,y)=(x,p(x,y)+\widetilde{p}(x,y,t),q(x,y)+\widetilde{q}(x,y,t)).
\end{equation*}  
    Now, since $F$ is topologically trivial, the family of curves $D(f_t)$ is topologically trivial as well (\cite[Th. 6.2]{Bedregal}, see also \cite[Cor. 40]{lev}). Thus, since $f$ is quasihomogeneous, it follows that $D(f)$ is also quasihomogeneous (see for instance \cite[Prop. 1.15]{mond91}). Therefore, we can apply Varchenko’s result \cite{Varchenko} (see also \cite[Prop. 2]{OShea}) to conclude that $D(f_t)$ is a non-negative degree deformation of $D(f)$.

    On the other hand, by Remark \ref{resultant} we obtain that $D(f)=V(Res_{\phi,\psi,z}(x,y)),$ where $$\phi(x,y,z)=\dfrac{p(x,y)-p(x,z)}{y-z}\, \ \text{and}\, \ \psi(x,y,z)=\dfrac{q(x,y)-q(x,z)}{y-z}.$$ It follows by \cite[Prop. 5, p. 164]{ideal} that there exists unique $\widetilde{\phi},\widetilde{\psi}\in\mathbb{C}\left\{x,y,z\right\}$ such that    
\begin{equation*}
Res_{\phi,\psi,z}=\widetilde{\psi}\cdot\phi+\widetilde{\phi}\cdot\psi.
\end{equation*}      
    Using the fact that $f$ is finitely determined, we can regard $\phi, \tilde{\phi}, \psi$ and $\tilde{\psi}$ as polynomials in $z$. It follows that the degree in $z$ of $\widetilde{\phi}$ (respectively, $\widetilde{\psi}$) is smaller than the degree of $\phi$ (respectively, $\psi$). Since $deg(\phi),deg(\psi)>0$ and $Res_{\phi,\psi,z}\neq 0$ we obtain that both $\widetilde{\phi}$ and $\widetilde{\psi}$ are nonzero.
   
    For $t\ne0,$ let $\Phi_t$ and $\Psi_t$ be the generators of $D^2(f_t),$ that is, $$\Phi_t:=\dfrac{p(x,y)-\widetilde{p}(x,y,t)-p(x,z)+\widetilde{p}(x,z,t)}{y-z}=\phi+\phi_t$$ and similarly, 
    
    $$\Psi_t:=\dfrac{q(x,y)-\widetilde{q}(x,y,t)-q(x,z)+\widetilde{q}(x,z,t)}{y-z}=\psi+\psi_t.$$ 
    
    Since $D(f_t)=V(Res_{\Phi_t,\Psi_t,z}),$ there exists $h(x,y,t)$ such that $Res_{\Phi_t,\Psi_t,z}=Res_{\phi,\psi,z}+h(x,y,t).$ On the other hand, let $\widetilde{\phi}_t$ and $\widetilde{\psi}_t$ be such that $Res_{\Phi_t,\Psi_t,z}=\widetilde{\psi}_t\cdot\Phi_t+\widetilde{\phi}_t\cdot\Psi_t$ as before. Writing $$\widetilde{\phi}_t=\sum_{j\ge0}A_{j,t}(x,y,z)t^j\ \, \text{and}\ \, \widetilde{\psi}_t=\sum_{j\ge0}B_{j,t}(x,y,z)t^j,$$ we obtain $$\widetilde{\psi}\cdot\phi+\widetilde{\phi}\cdot\psi+h(x,y,t)=B_{0,t}\cdot\phi+A_{0,t}\cdot\psi+B_{0,t}\cdot\phi_t+A_{0,t}\cdot\psi_t+\sum_{j\ge1}B_{j,t}t^j\cdot\Phi_t+\sum_{j\ge1}A_{j,t}t^j\cdot\Psi_t,$$ where $\widetilde{\psi}\cdot\phi+\widetilde{\phi}\cdot\psi=B_{0,t}\cdot\phi+A_{0,t}\cdot\psi.$ By the uniqueness of $\widetilde{\phi}$ and $\widetilde{\psi},$ we conclude that $A_{0,t}=\widetilde{\phi}$ and $B_{0,t}=\widetilde{\psi}.$ Moreover, $$h(x,y,t)=\widetilde{\psi}\cdot\phi_t+\widetilde{\phi}\cdot\psi_t+\sum_{j\ge1}B_{j,t}t^j\cdot\Phi_t+\sum_{j\ge1}A_{j,t}t^j\cdot\Psi_t.$$ From the above equality, it follows that if $\widetilde{p}$ adds terms of weighted degree lower than those of $p$ or $\widetilde{q}$ adds terms of weighted degree lower than those of $q,$ then $h(x,y,t)$ will add terms of weighted degree lower than those of $Res_{\phi,\psi,z},$ which is a contradiction. Therefore, $F$ is of non-negative degree.
\end{proof}\\

Now, we obtain the validity of Zariski's multiplicity conjecture for the case of finitely determined, quasihomogeneous, corank 1 map germs from $(\mathbb{C}^2,0)$ to $(\mathbb{C}^3,0)$.

\begin{theorem}\label{zariski}
    Let $f:(\mathbb{C}^2,0)\rightarrow(\mathbb{C}^3,0)$ be a finitely determined, quasihomogeneous, corank 1 map germ. If $F=(f_t,t)$ is a topologically trivial 1-parameter unfolding of $f$, then $F$ is equimultiple.
\end{theorem}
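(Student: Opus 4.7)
The plan is to combine two ingredients already assembled in this section. First, I would invoke Lemma \ref{Teo2}, which directly converts the hypothesis that $F=(f_t,t)$ is a topologically trivial 1-parameter unfolding of $f$ into the statement that $F$ is of non-negative degree. In particular, every additional term in the deformation of each coordinate $f_i$ has weighted degree at least that of $f_i$.

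Second, with $F$ now known to be of non-negative degree, I would apply the first author's result \cite[Theorem 5.3]{upper}, which ensures that a non-negative degree 1-parameter unfolding of a finitely determined, quasihomogeneous, corank 1 map germ from $(\mathbb{C}^2,0)$ to $(\mathbb{C}^3,0)$ is equimultiple. Chaining the two implications
\[
F \text{ topologically trivial} \ \Longrightarrow\ F \text{ of non-negative degree} \ \Longrightarrow\ F \text{ equimultiple}
\]
gives the theorem.

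In this sense, the present statement is essentially a corollary once Lemma \ref{Teo2} is in place. The genuine work (and the main obstacle) was already absorbed into that lemma, where one had to track how the resultant $Res_{\phi,\psi,z}$ defining $D(f)$ behaves under unfolding: using the uniqueness of the Bézout-type cofactors $\widetilde{\phi},\widetilde{\psi}$ and Varchenko's theorem applied to the quasihomogeneous plane curve $D(f)$, any term of weighted degree strictly smaller than that of $\phi$ or $\psi$ would force a term of too-low weighted degree in $Res_{\phi,\psi,z}$, contradicting the $\mu$-constancy of $D(f_t)$ that follows from topological triviality via \cite[Theorem~1.1]{Bedregal}. Once that obstruction is settled, the proof of Theorem \ref{zariski} is a two-step invocation with no further computation required.
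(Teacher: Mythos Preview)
Your proposal is correct and matches the paper's proof essentially verbatim: apply Lemma~\ref{Teo2} to obtain that $F$ is of non-negative degree, then invoke \cite[Theorem~5.3]{upper} to conclude equimultiplicity. The additional paragraph recapping the mechanism behind Lemma~\ref{Teo2} is accurate commentary but not part of the proof itself.
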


\begin{proof}
    By Theorem \ref{Teo2}, we have that $F$ is of non-negative degree. Now, by \cite[Th. 5.3]{upper}, we conclude that $F$ is equimultiple.
\end{proof}

\begin{remark} The difficulty in extending Theorem \rm{\ref{zariski}} \textit{to finitely determined, quasihomogeneous, corank 1 map germs from $(\mathbb{C}^n,0)$ to $(\mathbb{C}^{n+1},0)$ using the same techniques arises from the fact that $D(f_t)$ does not have isolated singularity when $n\ge3,$ which prevents the application of Varchenko’s results} \rm{\cite{Varchenko}}. \textit{We leave this problem open for further investigation, considering the problem for $n\ge2$ and for any corank.}
\end{remark}

\begin{mybox}
		\textbf{Problem 1:} Let $f:(\mathbb{C}^n,0)\rightarrow(\mathbb{C}^{n+1},0)$ be a finitely determined quasihomogeneous map germ. If $F=(f_t,t)$ is a topologically trivial 1-parameter deformation of $f$, is it true that $F$ is of non-negative degree?
	\end{mybox}
	
\section{Ruas' conjecture for $\mu_{m,k}$-minimal transverse slices}\label{sec5}
	
	$ \ \ \ \ $  To conclude this work, we now present an answer to Question 1. However, before stating our main result, let us first consider a lemma that will assist in solving one of the cases in Theorem \ref{rua}. In this lemma, we make use of the normal form (in the analytic sense) obtained by the first author \cite[Lemma 2.11]{normalform} for finitely determined, quasihomogeneous, corank 1 map germs from $(\mathbb{C}^2,0)$ to $(\mathbb{C}^3,0)$. 

As with topological triviality, studying the Whitney equisingularity of an unfolding directly from the definition is not a simple task. However, Marar, Nuño-Ballesteros, and Peñafort-Sanchis \cite[Th. 5.3]{[11]} showed that the 1-parameter unfolding $F=(f_t,t)$ of a finitely determined map germ from $(\mathbb{C}^2,0)$ to $(\mathbb{C}^3,0)$  is Whitney equisingular if and only if $\mu(D(f_t),0)$ and $\mu(\gamma_t,0)$ are constant, where $\gamma_t$ is the transverse slice of $f_t.$ For convenience of the reader, we write Theorem \ref{1.1} again here, preserving the numeration of the section.

    \begin{theorem}\label{rua}
        Let $f:(\mathbb{C}^2,0)\rightarrow(\mathbb{C}^3,0)$ be a finitely determined, quasihomogeneous, corank 1 map germ. Suppose that $F=(f_t,t)$ is a topologically trivial 1-parameter unfolding of $f.$ If the transverse slice $\gamma$ of $f$ is $\mu_{m,k}$-minimal, then $F$ is Whitney equisingular.
    \end{theorem}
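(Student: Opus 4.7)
The plan is to invoke the Whitney equisingularity criterion of Marar, Nuño-Ballesteros, and Peñafort-Sanchis \cite[Lemma 5.2]{[11]}, according to which $F$ is Whitney equisingular if and only if both $\mu(D(f_t),0)$ and $\mu(\gamma_t,0)$ are constant along the parameter space. Topological triviality of $F$ gives constancy of $\mu(D(f_t),0)$ via the Callejas--Bedregal--Houston--Ruas / Fernández de Bobadilla--Pe Pereira result, so the entire task reduces to proving that $\mu(\gamma_t,0)$ is constant. A crucial first input is Theorem \ref{zariski}: $F$ is equimultiple, and since the multiplicity of the transverse slice equals the multiplicity of the image surface, $m(\gamma_t,0)=m$ for all sufficiently small $t$.

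By the classification Lemmas \ref{slice1}, \ref{slice2}, \ref{slice3}, the slice $\gamma$ is always irreducible with $k\in\{2,3\}$ characteristic exponents. Choosing the plane $H$ simultaneously generic for every $f_t$ with $t$ small (as allowed by the remark following Example \ref{exslice}), $\gamma_t$ remains irreducible, so Corollary \ref{upper}(c) yields $ce(\gamma_t,0)\le k$, while upper semicontinuity of the Milnor number gives $\mu(\gamma_t,0)\le\mu(\gamma,0)$. Whenever $ce(\gamma_t,0)=k$, the $\mu_{m,k}$-minimality of $\gamma$ supplies the reverse inequality $\mu(\gamma,0)\le\mu(\gamma_t,0)$, so equality holds and Whitney equisingularity follows. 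In the case $k=2$ this already closes the argument, since $m\ge 2$ excludes the smooth alternative $ce(\gamma_t,0)=1$. Thus only the sub-case $k=3,\; ce(\gamma_t,0)=2$ needs to be excluded.

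For this last sub-case, the plan is to exploit the rigid normal form $f(x,y)=(x,\,y^m+\alpha xy,\,y^{m+2}+\beta xy^3)$ from Lemma \ref{slice3}(c), combined with Lemma \ref{Teo2}, which guarantees that the topologically trivial $F$ is of non-negative weighted degree for the weights $(w_1,w_2)=(m-1,1)$. The identity $\mu(\gamma_t,0)=2\,m(f_t(D(f_t)),0)$ from \cite[Lemma 5.2]{[11]} and \cite[Lemma 4.24]{[10]} converts the required equality $\mu(\gamma_t,0)=\mu(\gamma,0)$ into constancy of $m(f_t(D(f_t)),0)$. One then computes $D(f_t)$ explicitly as a resultant in the spirit of Proposition \ref{contraexemplo}, observing that in the $\mu_{m,3}$-minimal regime the equality $w_1=m-1=d_2-1$ replaces the strict inequality $d_1<d_2-1$ needed by Proposition \ref{contraexemplo} to construct a counterexample. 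This weight-rigidity, together with the containment $V(x)\subset D(f)$ from \cite[Lemma 3.7]{slice}, should forbid any non-negative degree perturbation from strictly decreasing $m(f_t(D(f_t)),0)$, contradicting the strict drop $\mu(\gamma_t,0)=(m-1)(m+1-1+1)<m^2$ forced by $ce(\gamma_t,0)=2$ via Lemma \ref{Lema1} and upper semicontinuity. Hence $ce(\gamma_t,0)=3$ after all, and the minimality argument applies.

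The principal technical obstacle is precisely the final step: rigorously showing that $m(f_t(D(f_t)),0)$ cannot strictly decrease under an arbitrary non-negative degree deformation of the restricted normal form of Lemma \ref{slice3}(c). I expect this to require a component-by-component analysis of $D(f_t)$, handling the fixed component $V(x)$ separately from the remaining quasihomogeneous branches, and a Varchenko-type invariance argument applied to the image of each component under $f_t$ to track its multiplicity through the deformation.
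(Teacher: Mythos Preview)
Your overall architecture coincides with the paper's: reduce to constancy of $\mu(\gamma_t,0)$ via \cite[Lemma 5.2]{[11]}, obtain equimultiplicity from Theorem \ref{zariski}, restrict to $k\in\{2,3\}$ by \cite[Theorem 1.1]{slice}, and bound $ce(\gamma_t,0)\le k$ through Corollary \ref{upper}(c). Your treatment of $k=2$ is exactly the paper's argument, spelled out a bit more fully.

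The divergence is entirely in the $k=3$ case. You split into the sub-cases $ce(\gamma_t,0)=3$ (closed cleanly by $\mu_{m,3}$-minimality plus upper semicontinuity) and $ce(\gamma_t,0)=2$, and then try to rule out the latter via the identity $\mu(\gamma_t,0)=2\,m(f_t(D(f_t)),0)$ together with a component-by-component study of $D(f_t)$ --- the step you yourself flag as the unfinished obstacle. The paper makes no such sub-case split and never passes through the double point curve here. After Lemma \ref{slice3}(c) pins $f$ down to the rigid form $f(x,y)=(x,\,y^m+\alpha xy,\,y^{m+2}+\beta xy^3)$ of type $(m-1,m,m+2;\,m-1,1)$ and Lemma \ref{Teo2} forces $F$ to be of non-negative degree, the paper argues directly that such a deformation does not alter the characteristic exponents of $\gamma$, giving $\mu(\gamma_t,0)=\mu(\gamma,0)$ at once. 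Minimality plays no role in Case 2.

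So the step you identify as the principal technical obstacle is precisely what the paper's route avoids. Rather than the detour through $m(f_t(D(f_t)),0)$, work directly with the slice: the characteristic exponents of $\gamma$ are determined (via the computation behind \cite[Proposition 3.10]{slice}) by the quasihomogeneous initial part of $f$, and a non-negative degree perturbation leaves that initial part intact; hence $\gamma_t$ retains the exponents $m,\,m+2,\,m+3$ and the sub-case $ce(\gamma_t,0)=2$ simply never arises.
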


    \begin{proof}
        Since $f$ is singular, it follows that $m(\gamma,0)=m(f(\mathbb{C}^2),0)\ge2.$ Moreover, as $f$ is finitely determined, quasihomogeneous, corank 1, the first author \cite[Th. 1.1]{slice} showed that the slice $\gamma$ has only two or three characteristic exponents. Now, for $F=(f_t,t)$ a topologically trivial 1-parameter unfolding of $f$, Theorem \ref{zariski} guarantees that $F$ is equimultiple, that is, $m(\gamma_t,0)=m(\gamma,0),$ for all sufficiently small $t.$ Then, by Corollary \ref{upper}, we have $ce(\gamma_t,0)\le ce(\gamma,0)$ for sufficiently small $t,$ where $\gamma_t$ denotes the transverse slice of $f_t.$ Now, let us consider the possible cases:\\

        \noindent \underline{Case 1}: Suppose that $ce(\gamma,0)=2$ but $F$ is not Whitney equisingular. Then we must have $ce(\gamma_t,0)=1$ for all sufficiently small $t\ne0,$ which implies $m(\gamma_t,0)=1,$ a contradiction since $m(\gamma,0)\ge2.$\\

        \noindent \underline{Case 2}: Suppose that $ce(\gamma,0)=3.$ Thus, $\gamma$ is $\mu_{m,3}$-minimal and it follows from Lemma \ref{slice3} that $f$ is of the quasihomogeneous form $f(x,y)=(x,y^m+\alpha xy,y^{m+2}+\beta xy^3)$ of type $(w_1,w_1+1,w_1+3;w_1,1).$ Since $F$  is topologically trivial, Lemma \ref{Teo2} implies that $F$ is of non-negative degree. Thus, this deformation does not change the characteristic exponents of $\gamma.$ Therefore, $\mu(\gamma_t,0)=\mu(\gamma,0)$ for all sufficiently small $t\ne0,$ and by \cite[Th. 5.3]{[11]} it follows that $F$ is Whitney equisingular.\\

        Therefore, from Case 1 and Case 2, we conclude that $F$ must be Whitney equisingular.
\end{proof}

\begin{remark}\label{5.5}
(a) The corank 1 assumption in Theorem \rm{\ref{rua}} \textit{is essential, as illustrated by the following quasihomogeneous map germ $f:(\mathbb{C}^2,0)\rightarrow(\mathbb{C}^3,0)$ given by $$f(x,y)=(x^2,y^3+x^3,x^4+x^3y+x^2y^2+xy^3+y^4).$$ One can verify that $f$ is finitely determined and that its transverse slice is $\mu_{6,3}$-minimal; a normal form is given by $\varphi(u)=(u^6,u^8+u^9)$. Now, consider the 1-parameter unfolding $F=(f_t,t)$ given by $$f_t(x,y)=(x^2-ty^2,y^3+x^3,x^4+x^3y+x^2y^2+xy^3+y^4).$$ This deformation is of non-negative degree. Therefore, by Damon’s result} \rm{\cite[\textit{Cor.} 1]{Damon}}\textit{, $F$ is topologically trivial. However, we have $\mu(\gamma,0)=36$ while $\mu(\gamma_t,0)=35,$ for small $t\ne0.$ Thus, by} \rm{\cite[\textit{Lemma} 5.2]{[11]}}\textit{, we conclude that $F$ is not Whitney equisingular. Hence, the assumption that the map $f$ is of corank 1 cannot be dropped from Theorem} \rm{\ref{rua}}.

\noindent\textit{(b) As for the difficulty in the case where $f$ is not quasihomogeneous, it arises from the impossibility of using, once again, the results of Varchenko} \rm{\cite{Varchenko}}\textit{, which leads us to seek new techniques to address this case. Here, we leave the following question:}
\end{remark}

\begin{mybox}
		\textbf{Problem 2:} Let $f:(\mathbb{C}^2,0)\rightarrow(\mathbb{C}^3,0)$ be a finitely determined, corank 1 map germ, and let $F=(f_t,t)$ be a topologically trivial 1-parameter deformation of $f$. If the transverse slice $\gamma$ of $f$ is $\mu_{m,k}$-minimal, it is true that $F$ is Whitney equisingular?
	\end{mybox}

\section{A new formulation of Ruas’ conjecture}\label{sec8}

$ \ \ \ \ $ In this section we will study 1-parameter Whitney equisingular unfoldings of a map germ $f$ from $(\mathbb{C}^2,0)$ to $(\mathbb{C}^3,0)$. Along this section, $\widetilde{\gamma}$ will denote the pre-image of the transverse slice of $f.$ To begin with, we introduce our new invariant, denoted by $\mu((W(f),0))$, which is defined as the Milnor number (at $0$) of the plane curve germ $(W(f),0)$ defined by

\begin{center}
$(W(f),0):=(D(f) \cup \tilde{\gamma}(f), \ 0)$.
\end{center} 

The following lemma (item (c)) shows how the Milnor number of $(W(f),0)$ is related with other invariants of $f$.

\begin{lemma}\label{lemmaaux4} Let $f:(\mathbb{C}^2,0)\rightarrow (\mathbb{C}^3,0)$ be a finitely determined map germ. Then\\

\noindent (a) $i(D(f),\tilde{\gamma}(f))=2 m(f(D(f),0)$.

\noindent (b) $m(D(f),0)\cdot m(\tilde{\gamma},0) \leq 2m(f(D(f)),0)$. In particular, if $f$ has corank $1$, then we have 

\begin{center}
$m(D(f),0) \leq 2m(f(D(f)),0)$.
\end{center}

\noindent (c) $\mu(W(f),0)=\mu(D(f),0)+\mu(\tilde{\gamma},0)+4m(f(D(f),0)-1$.

\end{lemma}

\begin{proof} (a) Let $(\mathcal{X},\mathcal{Y},\mathcal{Z})$ be a system of coordinates of $\mathbb{C}^3$. After a change of coordinates, we can suppose that the plane $H=V(\mathcal{X})$ is generic for $f=(f_1,f_2,f_3)$ By the genericity of $H$, we can also suppose that 

\begin{equation}\label{eq11}
C(f(D(f))) \cap H= \lbrace 0 \rbrace.
\end{equation}

\noindent where $C(f(D(f)))$ denotes the Zariski tangent cone of $f(D(f))$. This means that if $\overrightarrow{v}=(v_1,v_2,v_3)$ is a vector with same direction as the tangent line of an irreducible component of $f(D(f))$, then $v_1 \neq 0$. Let $D(f)^i$ be an irreducible component of $D(f)$ and consider its image $f(D(f)^i)$ by $f$. Consider a  parametrization $\varphi_i:U\rightarrow D(f)^i$ of $D(f)^i$ defined by

\begin{center}
$\varphi_{i}(u)=(\varphi_{i,1}(u),\varphi_{i,2}(u))$,
\end{center}

\noindent where $U$ is a neighboorhood of $0$ in $\mathbb{C}$ and $\varphi_{i,1}(u), \varphi_{i,1}(u)\in \mathbb{C}\lbrace u \rbrace$, and suppose that it is primitive, i.e., $\varphi_{i}$ is generically $1$-to-$1$. Consider the mapping $\hat{\varphi}_{i}=f \circ \varphi_{i}(u): U \rightarrow f(D(f)^i)$, defined by  

\begin{equation}\label{eq12}
\hat{\varphi}_{i}(u):=(f_1(\varphi_i(u)),f_2(\varphi_i(u)),f_3(\varphi_i(u))).
\end{equation}

Suppose that $D(f)^i$ is an identification component of $D(f)$. Since the restriction of $f$ to $D(f)^i$ is generically $1$-to-$1$, the mapping $\hat{\varphi}_{i}$ in (\ref{eq12}) is a parametrization of $f(D(f)^i)$ (a primitive one). On the other hand, if $D(f)^i$ is a fold component of $D(f)$ then the restriction of $f$ to $D(f)^i$ is $2$-to-$1$, hence the mapping $\hat{\varphi}_{i}$ in (\ref{eq12}) is generically $2$-to-$1$ and it is also a parametrization of $f(D(f)^i)$ (a double cover one). In the sequel, $ord_u(p(u))$ denotes the degree of the non-zero term of lowest degree of $p(u) \in \mathbb{C}\lbrace u \rbrace$. If $D(f)^i$ is an identification component of $D(f)$, then the condition (\ref{eq11}) implies that $ord_u(f_1(\varphi_i(u)))=m(f(D(f)^i),0)$. On the other hand, if $D(f)^i$ is a fold component one then $ord_u(f_1(\varphi_i(u)))=2m(f(D(f)^i),0)$. Hence we have that 

\begin{center}
$i(D(f)^i,\tilde{\gamma})=ord_u f_1(\varphi_{i,1}(u),\varphi_{i,2}(u))=ord_u f_1(\varphi_i(u))$,
\end{center}

\noindent see for instance \cite[p. 174]{Greuel}. Denote the irreducible components of $D(f)$ by $D(f)^1,\cdots, D(f)^r$. Note that if $(D(f)^i,D(f)^j)$ is a pair of identification components then clearly $m(f(D(f)^i),0)=m(f(D(f)^j),0)$. Denote by $r_i(f)$ the number of identification components of $D(f)$ and $r_f(f)$ the number of fold components of $D(f).$ Suppose there are $r_i(f)=2k$ identification components of $D(f)$ and let $(D(f)^{i_1},D(f)^{j_1})$, $\cdots$, $(D(f)^{i_{k}},D(f)^{j_{k}})$ be all pairs of identifications components of $D(f)$. Let $D(f)^{l_1},\cdots,D(f)^{l_{r_f}}$ be all fold components of $D(f)$. Therefore $r=r_i(f)+r_f(f)$. By the additive property of the intersection multiplicity for plane curves and the additive property of the multiplicity for curves in general, we have that:

\begin{eqnarray*}
    i(D(f), \tilde{\gamma}(f))&=& \displaystyle \sum_{q=1}^{r} i(D(f)^{q},\tilde{\gamma}(f))\\
    &=&\left( \displaystyle \sum_{q=1}^{k} m(f(D(f)^{i_q}),0) \right)+\left( \displaystyle \sum_{q=1}^{k}  m(f(D(f)^{j_q}),0) \right)+ \left( \displaystyle \sum_{q=1}^{r_f(f)} 2 \cdot m(f(D(f)^{l_q}),0) \right)\\
    &=&\left( \displaystyle \sum_{q=1}^{k} 2 \cdot m(f(D(f)^{i_q}),0) \right)+ \left( \displaystyle \sum_{q=1}^{r_f(f)} 2 \cdot m(f(D(f)^{l_q}),0) \right)=2m(f(D(f)),0).
\end{eqnarray*}

\noindent (b) It follows by (a) and the property that $i(C^1,C^2)\geq m(C^1,0)\cdot m(C^2,0)$. If $f$ has corank $1$ then $\tilde{\gamma}$ is smooth and hence it has multiplicity $1$.\\

\noindent (c) Consider the germ of curve $(W,0):=(W(f),0)=(D(f) \cup \tilde{\gamma},0)$. Since $H$ is generic, we have that $(D(f) \cap \tilde{\gamma},0)=\lbrace 0 \rbrace$, i.e., $(D(f),0)$ and $(\tilde{\gamma},0)$ do not have any irreducible component in common. It is convenient to give a different notation for branches in $(D(f),0)$ and $(\tilde{\gamma},0)$. Let $(\tilde{\gamma}^1,0), \cdots, (\tilde{\gamma}^p,0)$ and $(D(f)^1,0),\cdots, (D(f)^r,0)$ be the irreducible components of $(\tilde{\gamma},0)$ and $(D(f),0)$, respectively. In this sense, denote by $(W^1,0),\cdots,(W^{p+r},0)$ the irreducible components of $(W,0)$. By \cite[Cor. 1.2.3]{Buchweitz}, we have that

\begin{center}
$\mu(W,0)= \displaystyle \sum_{q=1}^{p+r} (\mu(W^q,0)-1) \ + 2 \left( \displaystyle \sum_{q<j} i(W^q,W^j) \right) +1$ .
\end{center}

Again, by the additive property of the intersection multiplicity for plane curves we have that:

\begin{eqnarray*}
    \mu(W,0)&=& \displaystyle \sum_{q=1}^r (\mu(D(f)^q,0)-1)+ \displaystyle \sum_{q=1}^p (\mu(\tilde{\gamma}^q,0)-1)\\
    & & + \displaystyle 2 \left(\sum_{1 \leq q<j \leq r} i(D(f)^q,D(f)^j)+\displaystyle \sum_{1 \leq q<j \leq p} i(\tilde{\gamma}^q,\tilde{\gamma}^j)+\displaystyle  \sum_{q=1}^p \sum_{j=1}^r i(\tilde{\gamma}^q,D(f)^j) \right)+1\\
    &=& \left( \displaystyle \sum_{q=1}^r (\mu(D(f)^q,0)-1)+ \displaystyle 2 \left(\sum_{1<q,j<r} i(D(f)^q,D(f)^j)\right)+1 \right)\\
    &&+ \left( \displaystyle \sum_{q=1}^s (\mu(\tilde{\gamma}^q,0)-1) + \displaystyle 2 \left( \sum_{1<q,j<r} i(\tilde{\gamma}^q,\tilde{\gamma}^j)\right) + 1 \right)+ \displaystyle 2 i(D(f), \tilde{\gamma}) -1\\
    &=& \mu(D(f),0)+\mu(\tilde{\gamma},0)+4m(f(D(f),0)-1
\end{eqnarray*}

\noindent where the last equality follows by (a).\end{proof}\\

Finally we return our attention to Ruas' conjecture revisited. As a consequence of Lemma \ref{lemmaaux4}, the following result provide a positive answer for that question for Whitney equisingularity. For convenience of the reader, we write Theorem \ref{main result 3} again here, preserving the numeration of the section

\begin{theorem}\label{w_t}
    Let $f:(\mathbb{C}^2,0)\rightarrow(\mathbb{C}^3,0)$ be a finitely determined map germ and let $F:(\mathbb{C}^2 \times \mathbb{C},0)\rightarrow (\mathbb{C}^3 \times \mathbb{C},0)$, $F=(f_t,t)$, be an unfolding of $f$. Set $(W(f_t),0):=(D(f_t)\cup \widetilde{\gamma}_t,0)$, where $\widetilde{\gamma}_t$ is the pre-image of the transverse slice of $f_t$. Then

\begin{center}
$F$ is Whitney equisingular $ \ \ \ $ $\Longleftrightarrow$ $ \ \ \ $ $\mu(W(f_t),0)$ is constant.
\end{center}
\end{theorem}

\begin{proof} By Lemma \ref{lemmaaux4}(c), we have that 

\begin{equation}\label{eq10}
\mu(W_t,0)=\mu(D(f_t),0)+4m(f_t(D(f_t)),0)+\mu(\tilde{\gamma}_t,0)-1
\end{equation}

\noindent Now the result follows by \cite[Th. 5.3]{[11]} and the upper semi-continuity of the invariants in (\ref{eq10}).\end{proof}\\

Now, to obtain a new result about Ruas' conjecture, we can study the existence of a family of plane curves whose constancy of the Milnor number of their fibers is necessary and sufficient to ensure the bi-Lipschitz equisingularity of a $1$-parameter unfolding of a finitely determined map germ from $(\mathbb{C}^2,0)$ to $(\mathbb{C}^3,0)$. In other words, we have the next problem

\begin{mybox}
		\textbf{Problem 3:} Let $f:(\mathbb{C}^2,0)\rightarrow(\mathbb{C}^3,0)$ be a finitely determined map germ. Consider $F=(f_t,t)$ a 1-parameter unfolding of $f.$ There exists a family $L_t$ of plane curves such that the bi-Lipschtiz triviality of $F$ is characterized by the constancy of $\mu(L_t,0)$?
	\end{mybox}

An interesting case is when $\tilde{\gamma}$ is transversal to $D(f)$. In this case, the intersection multiplicity $i(D(f),\tilde{\gamma})$ is simply the product of the multiplicities of $D(f)$ and $\tilde{\gamma}$. Under this hypothesis, if $F$ is a topological trivial 1-parameter unfolding of a corank $1$ map germ, then it is also Whitney equisingular. This case is explored in the following corollary.

\begin{corollary}\label{app1} Let $f:(\mathbb{C}^2,0)\rightarrow(\mathbb{C}^3,0)$ be a finitely determined map germ. Suppose now that $\tilde{\gamma}$ is transversal to $D(f)$, i.e, $\tilde{\gamma}$ and $D(f)$ do not have any tangent in common. Then\\

\noindent (a) $2m(f(D(f),0)= m(D(f),0)\cdot m(\tilde{\gamma},0)$.\\

In particular, if $f$ has corank $1$, then $m(D(f),0)=2m(f(D(f)),0)$.\\

\noindent (b) In addition, suppose that $f$ has corank $1$. Let $F:(\mathbb{C}^2 \times \mathbb{C},0)\rightarrow (\mathbb{C}^3 \times \mathbb{C},0)$, $F=(f_t,t)$, be an unfolding of $f$. If $F$ is topologically trivial, then it is Whitney equisingular.
\end{corollary}

\begin{proof} The proof of (a) follows by Lemma \ref{lemmaaux4}(a) and the fact that in this case we have that

\begin{center}
$i(D(f),\tilde{\gamma})=m(D(f),0) \cdot m(\tilde{\gamma},0)$. 
\end{center}

\noindent If $f$ has corank $1$ then $\tilde{\gamma}$ is smooth and hence $m(\tilde{\gamma},0)=1$. For the proof of (b), note that the assumption of transversality of $D(f)$ and $\tilde{\gamma}$ implies that $m(D(f),0)=2m(f(D(f)),0)$. Since $F$ is topologically trivial, then $D(f_t)$ is a topological trivial family of plane curves, in particular, $m(D(f_t),0)$ is constant. Note that $\tilde{\gamma}_t$ is smooth for all $t$, hence $m(\tilde{\gamma}_t,0)=1$ for all $t$. Thus we have that 

\begin{center}
$m(D(f),0)=i(D(f),\tilde{\gamma})\geq i(D(f_t),\tilde{\gamma}_t)\geq m(D(f_t),0)=m(D(f),0)$
\end{center}.  

\noindent Hence, $m(f_t(D(f_t)),0)$ is constant. Now the result follows by \cite[Cor. 8.9]{Gaffney}.\end{proof}\\

Another consequence of Lemma \ref{lemmaaux4} is about the multiplicity of double fold map germs from $(\mathbb{C}^2,0)$ to $(\mathbb{C}^3,0)$ which are map germs in the form $f(x,y)=(x^2,y^2,h(x,y))$. These kind of map germs was introduced by Marar and Nuño-Ballesteros in \cite{[3]}. The map germ $f:(\mathbb{C}^2,0)\rightarrow (\mathbb{C}^3,0)$ defined by

\begin{center}
$f(x,y)=(x^2,y^2,x^3+y^3+xy)$.
\end{center}

\noindent is a typical example of a double fold map germ and was considered in \cite{[11]}. In this example, we have that $f$ has corank $2$ and it is finitely determined. In \cite[Ex. 5.4]{[11]}, Marar, Nuño-Ballesteros and Peñafort-Sanchis showed that and $m(D(f),0)=m(f(D(f)),0)=5$. The following result shows that the equality of the multiplicities of these curves actually always occurs for map germs of this type.

\begin{corollary} If $f:(\mathbb{C}^2,0)\rightarrow (\mathbb{C}^3,0)$ is a corank $2$ finitely determined double fold map germ, i.e. a map germs of the form $f(x,y)=(x^2,y^2,h(x,y))$, then

\begin{center}
 $m(D(f),0)=m(f(D(f)),0)$.
\end{center}
 
\end{corollary} 

\begin{proof} Since $f$ has corank $2$ then $h(x,y) \in \mathfrak{m}_2^2$, where $\mathfrak{m}_2$ denotes the maximal ideal of $\mathcal{O}_2$. We have that for $\tilde{\gamma}=V(\alpha_1 x^2 + \alpha_2 y^2+ \alpha_3 h(x,y))$ where $\alpha_i$ are generic constants. Note that $\tilde{\gamma}$ has the topological type of a Morse singularity, i.e., locally $\tilde{\gamma}$ consist of two smooth branches with transversal intersection, in particular $m(\tilde{\gamma},0)=2$. The genericity hypothesis implies that the curves $\tilde{\gamma}$ and $D(f)$ are transversal. Hence, by Corollary \ref{app1} we conclude that  $2m(f(D(f)),0)=m(D(f),0)\cdot m(\tilde{\gamma},0)=2m(D(f),0)$.\end{proof}\\

We finish with the following corollary which can be viewed as a simple case of Zariski's multiplicity conjecture for families of parametrized surfaces.

\begin{corollary}\label{corZariskiparafamilias} Let $f:(\mathbb{C}^2,0)\rightarrow (\mathbb{C}^3,0)$ be a finitely determined, quasihomogeneous map germ. Consider an 1-parameter unfolding $F:(\mathbb{C}^2 \times \mathbb{C},0)\rightarrow (\mathbb{C}^3 \times \mathbb{C},0)$, $F=(f_t,t)$. If $F$ adds only terms of the same degrees as the degrees of $f$, that is, $f_t$ is quasihomogeneous of the same type for all $t$, then $m(f_t(\mathbb{C}^2),0)$ is constant along the parameter space.
\end{corollary}

\begin{proof} It is a directly consequence of \cite[Lemma 4.2]{OtoZari}.\end{proof}\\

Consider $f$ as in Corollary \ref{corZariskiparafamilias}, if $f$ has corank $1$ then we can say more about the unfolding $F$.  In \cite[Cor. 4.4]{slice} the first author shows that if in addition $f$ has corank $1$, then $F$ is Whitney equisingular. One may ask if under the conditions of the Corollary \ref{corZariskiparafamilias} (without corank $1$ hypothesis) we also should have that $F$ is Whitney equisingular. Unfortunately the answer is negative (see \cite[Ex. 5.4]{Ruas}).

    \begin{remark}
        The authors used the software Surfer \rm{\cite{surfer}} \textit{to create all figures in text}.
    \end{remark}

\section*{Acknowlegments}	
$ \ \ \ \ $ This work constitutes a part of the second author's Ph.D. thesis at the Federal University of Paraíba under the supervision of Otoniel Nogueira da Silva to whom he would like to express his deepest gratitude. Moreover, the first author acknowledges support by grant Universal 407454/2023-3 from Conselho Nacional de Desenvolvimento Científico e Tecnológico (CNPq). The second author acknowledges support by Coordenação de Aperfeiçoamento de Pessoal de Nível Superior (CAPES). Finally, we thank Roberto Giménez Conejero for constructive suggestions that improved this manuscript, and Saurabh Trivedi for directing us to Thom's problem.

	\begin{flushleft}
		$\bullet$ Silva, O. N.\\
		\textit{otoniel.silva@academico.ufpb.br}\\
		Universidade Federal da Paraíba, 58.051-900, João Pessoa, PB, Brazil.\\
		
		$ \ \ $\\
		
		$\bullet$ Silva Jr, M. M.\\
		\textit{mmsj@academico.ufpb.br}\\
		Universidade Federal da Paraíba, 58.051-900, João Pessoa, PB, Brazil.	
	\end{flushleft}

\end{document}